\documentclass[11pt]{article}

\usepackage{mathtools}
\usepackage{geometry}
\usepackage{graphicx}
\usepackage[misc,geometry]{ifsym}
\usepackage{subfig}
\usepackage{tikz}
\usetikzlibrary{arrows}
\usepackage{url}
\usepackage{verbatim}
\usepackage{amsfonts}
\usepackage{caption}
\usepackage{tabularx}
\usepackage{array}
\usepackage{booktabs}
\usepackage{multirow}
\usepackage{fancyvrb}
\usepackage{amsmath}
\usepackage{comment}
\usepackage{algorithm}
\usepackage{algpseudocode}
\usepackage{enumerate}
\usepackage{amsthm}
\usepackage{latexsym}
\usepackage{amsmath,amssymb}
\graphicspath{{experimentsPaper/}}

\usepackage{blindtext,titlefoot}

 \geometry{
 letterpaper,
 margin = 1in
 }
\parskip 2 ex
\parindent 0 in

\def\E{\mathbb{E}}

\def\Pr{{\mathbb{P}}}
\def\Prob{\Pr}

\newcommand{\given}{\middle|}

\newcommand{\ignore}[1]{}

\theoremstyle{plain}
\newtheorem{theorem}{Theorem}
\newtheorem{lemma}[theorem]{Lemma}
\newtheorem{corollary}[theorem]{Corollary}

\newtheorem{remark}[theorem]{Remark}

\newtheorem{definition}[theorem]{Definition}


\newcounter{rot}


\begin{document}

\thispagestyle{empty}

\title{Posterior Consistency for a Non-parametric Survival Model under a Gaussian Process Prior \thanks{ 
Tamara Fern\'andez was supported by funding from Becas CHILE.
}}
\author{
Tamara Fern\'andez
\and Yee Whye Teh
}
\date{\today}

\maketitle
\unmarkedfntext{Authors' address: Department of Statistics, University of Oxford, UK.}
\unmarkedfntext{Email: {\tt \{fernandez,y.w.teh\}@stats.ox.ac.uk}}

\thispagestyle{empty}

\begin{abstract}
In this paper, we prove almost surely consistency of a Survival Analysis model, which puts a Gaussian process, mapped to the unit interval, as a prior on the so-called hazard function. We assume our data is given by survival lifetimes $T$ belonging to $\mathbb{R}^{+}$, and covariates on $[0,1]^d$, where $d$ is an arbitrary dimension.
We define an appropriate metric for survival functions and prove posterior consistency with respect to this metric. Our proof is based on an extension of the theorem of Schwartz (1965), which gives general conditions for proving almost surely consistency in the setting of non 
i.i.d random variables. Due to the nature of our data, several results for Gaussian processes on $\mathbb{R}^+$ are proved which may be of independent interest.
\end{abstract}

\setcounter{page}{0}

\section{Introduction}
Consider a non-negative random variable $T$ representing the waiting time until some event of interest happens. Moreover, assume that associated to that time, there exits a covariate $X$ taking values on $[0,1]^d$. Given this set-up, the aim of Survival analysis is to estimate the underlying  survival function $S(t)=\Prob(T>t)$, and to asses the effect of the covariates $X$ on the distribution of these waiting times. In \cite[Fern\'andez et al. (2016)]{fernandez}, the authors propose a non-parametric Bayesian model for this type of data. The approach is based on defining a prior over the hazard function $\lambda_x=f_x/S_x$, where $f_x$ and $S_x$ stand for the probability density function and the survival function respectively. This prior is constructed by using a bounded and positive map $\sigma$, of a linear combination of Gaussian processes $(\eta_j(t))_{t\geq 0}$ and  covariates $X\in[0,1]^d$, i.e. 
\begin{eqnarray}
\lambda_x(t)&=&\sigma\left(\eta_0(t)+\sum_{j=1}^d x_j\eta_j(t)\right).\nonumber
\end{eqnarray}

The aim of this paper is verify almost surely consistency of the survival functions over a large class of possible stratifications of covariates. In order to do this, we assume the existence of a true parameter $\theta_0$, in some parameter space $\Theta$, under which our data was generated. Furthermore, we identify sufficient conditions over the prior $\Pi$ on $\Theta$, and true parameter $\theta_0\in\Theta$ which guarantees posterior consistency. 

Many have been the contributions from Bayesian non-parametric to the problem of survival analysis, as this problem has motivated the study of many different stochastic processes that attain good properties for modelling survival data. An important example are the so-called neutral-to-the-right random probability measures for modelling the cumulative density function, \cite[Doksum. (1977)]{doksum1974tailfree}. Within this class of processes are included the Dirichlet process and the Beta-Stacy process.  Other examples outside the latter class include the Beta process, which can be used as a prior for the cumulative hazard function, \cite[Hjort. (1990)]{hjort1990nonparametric} and the extended Gamma process  \cite[Dykstra and Laud. (1981)]{dykstra1981bayesian}, which defines a prior over the hazard rate function. See \cite[pages 87--99]{bayes2010} for a complete review of these methods. 	

On the other hand, there exists an extensive literature on posterior consistency for non-parametric Bayesian methods. Indeed there are some general results providing conditions under we achieve posterior consistency on infinite-dimensional spaces. Some of them are \cite[Doob, J. L. (1949)]{Doob1949}, \cite[Schwartz, L. (1965)]{schwartz1965bayes}, \cite[Barron, Schervish and Wasserman. (1999)]{barron1999consistency}. Moreover, models under Gaussian Process priors have also been subject of intensive study. Some relevant works include: \cite[Choi and Schervish. (2004)]{choi2004posterior} in the setting of regression,  \cite[Ghosal and Roy. (2006)]{ghosal2006posterior} in the setting of logistic regression and 
 \cite[Tokdar and Ghosh. (2007)]{tokdar2007posterior} in the setting of density estimation. 

The work of \cite[Tokdar and Ghosh. (2007)]{tokdar2007posterior} is of particular interest for us since it relates more closely with our setting. Nevertheless it differs from our approach not only because the model is different but, more importantly, because it considers distributions supported on bounded intervals. Indeed, one of the main challenges of our work is given by the nature of the data, the survival lifetimes belong to $\mathbb{R}^{+}$. Additionally, as we consider covariates, we add an extra difficulty to our work. For the latter, we adopt the approach given in \cite[Ghosal and Roy. (2006)]{ghosal2006posterior}, and consider a random and fixed design for them.

Finally, we use the extension of  Schwartz's theorem given in \cite[Choi and Schervish. (2004)]{choi2004posterior}. Then, our result is based on the existence of tests with exponentially small type I and type II error, and the positivity of Kullback-Leibler neighbourhoods of the true parameter. In order to verify these conditions, we use results from the Vapnik-Chervonenkis (VC) theory for the tests, and results from RKHS of Gaussian process priors for the Kullback-Leibler neighbourhoods. Good reviews for both theories can be found on \cite[Devroye, Luc and Lugosi. (2012)]{devroye2012combinatorial} and \cite[Van der Vaart and van Zanten. (2008)]{van2008reproducing} respectively.

The paper is organized as it follows. In section 2, we review the model introduced in \cite[Fern\'andez et al.]{fernandez}. In section 3 we give a brief review of Schwartz Theorem for i.i.d. random variables and provide the extended version for independent but non- identically distributed given by \cite[Choi and Schervish. (2004)]{choi2004posterior}. In section 4 we define the metric used for survival functions associated to a set of covariates and moreover, we describe the assumptions made over the design of covariates, prior and true parameter for achieving posterior consistency. The assumptions made in these sections are use to prove the main theorems in section 5. Finally in section 6 we provide a brief discussion.
\section{Model}\label{SectionModel}
The model in \cite[Fern\'andez et al. (2016)]{fernandez} is given by the following hierarchical structure
\begin{eqnarray}\label{eqn:model}
T_i|\lambda,X_i=x &\overset{ind}{\sim}&f_x(t),\nonumber\\
f_x(t)&=&\lambda_x(t)e^{-\int_{0}^{t}\lambda_x(s)ds},\nonumber\\
\lambda_x(t)&=&\Omega\sigma\left(\eta_0(t)+\sum_{j=1}^dx_j\eta_j(t)\right),\\
\Omega&\sim&\nu,\nonumber\\
\eta_j(\cdot)&\overset{ind.}{\sim}& \mathcal{GP}(0,\kappa_j(\cdot)),j=0,\ldots,d, \text{ independently of }\Omega,\nonumber
\end{eqnarray}
in which $\sigma(x)=(1+e^{-x})^{-1}$, and $\kappa_j$ are stationary covariance functions. In particular, the boundedness of the link function $\sigma$ is a necessary condition for applying the inference scheme used in \cite[Fern\'andez et al. (2016)]{fernandez}, but not for giving a correct definition of a prior over hazard functions. For the latter we could arbitrarily use any non-negative map.

For simplicity, we adopt this particular assumption, but at the end of the paper discuss its relevance for our results.

A nice property of this model is that $\E(\lambda_x(t)|\Omega)=\Omega/2$, which corresponds to the hazard function of an exponential random variable with mean $2/\Omega$. This implies we can construct distributions over hazard functions centred on parametric models. Additionally, we remark that the above model has been proved to be well-defined in \cite[Fern\'andez et al. (2016)]{fernandez}, in the sense that under the assumption
\begin{eqnarray}
\int_0^T\kappa_j(t)dt&=&o(T),\label{kernel o(t)}
\end{eqnarray}
for all $j\in\{0,\ldots,d\}$, the conditional distribution function of the survival lifetimes $T$, given a covariate $X=x$, is a proper distribution function with probability 1, i.e.
\begin{eqnarray}
\Pi\left(\lim_{t\to\infty}S_{X}(t,\theta)=1\given X=x\right)&=&1.\nonumber
\end{eqnarray}
Notice that condition \ref{kernel o(t)} holds for all the stationary kernels which decreases to zero.
\section{Posterior Consistency}
Consider the parameter space $\Theta=\mathbb{R}^{+}\times\mathcal{F}^{d+1}$, where $\mathcal{F}$ denotes a space of functions. We assume that there exists a true parameter $\theta_0=(\Omega_0,\eta_{0,0},\ldots,\eta_{d,0})$ on $\Theta$, which determines the true hazard function $\lambda_{x,0}(t)=\Omega_0\sigma\left(\eta_{0,0}(t)+\sum_{j=1}^dx_j\eta_{j,0}(t)\right)$ defined in model \eqref{eqn:model}. The aim of this paper is to find conditions over the prior $\Pi$ on $\Theta$ and true parameter $\theta_0\in\Theta$, to ensure this model achieves posterior consistency of the survival functions in $\mathbb{R}^{+}$. 

Schwartz, L. (1965)\cite{schwartz1965bayes} gave conditions for proving posterior consistency at $\theta_0$ under a loss function $d(\theta,\theta_0)$ in the context of independent and identically distributed random variables. These conditions include the existence of a sequence of tests with exponentially small type I and type II error, and prior positivity of Kullback-Leibler neighbourhoods of the true parameter $\theta_0$. A bit more involved scenario is when we do not have identically distributed observations. Choudhuri, Ghosal and Roy (2004) \cite{choudhuri2004bayesian}  extended Schwartz's Theorem for the case of non i.d.d. observations for the case of convergence in-probability.  Later, Choi and Schervish (2004)\cite{choi2004posterior} provided an alternative extension for almost surely convergence.

\begin{theorem}[Choi and Schervish \cite{choi2004posterior}]\label{TheoChoi}
Let $(Z_i)_{i=1}^\infty$ be independently distributed with densities $(f_i(\cdot,\theta))_{i=1}^\infty$, with respect to a common $\sigma$-finite measure, where the parameter $\theta$ belongs to an abstract measurable space $\Theta$. The densities $f_i(\cdot;\theta)$ are assumed to be jointly measurable. Let $\theta_0\in\Theta$ and let $P_{\theta_0}$ stand for the joint distribution of $(Z_i)_{i=1}^\infty$ when $\theta_0$ is the true value of $\theta$. Define the loss function $d(\theta,\theta_0)$ and let $U_\epsilon=\left\{\theta:d(\theta,\theta_0)\leq\epsilon\right\}$ be a set of $\Theta$ for all $\epsilon>0$. Let $\theta$ have a prior $\Pi$ on $\Theta$. Define
\begin{eqnarray}
\Upsilon(\theta_0,\theta)&=&\log\frac{f_i(Z_i;\theta_0)}{f_i(Z_i;\theta)},\nonumber\\
K_i(\theta_0,\theta)&=&\E_{\theta_0}(\Upsilon(\theta_0,\theta)),\nonumber\\
V_i(\theta_0,\theta)&=&\mathbb{V}ar_{\theta_0}(\Upsilon(\theta_0,\theta)).\nonumber
\end{eqnarray}  
\begin{itemize}
\item[(T1)]Prior positivity of neighbourhoods.\\
Suppose that there exits a set $B$ with $\Pi(B)>0$ such that 
\begin{itemize}
\item[(i)]$\sum_{i=1}^\infty\frac{V_i(\theta_0,\theta)}{i^2}<\infty, \forall\theta\in B$,
\item[(ii)] For all $\varepsilon>0$, $\Pi(B\cap\{\theta:K_i(\theta_0,\theta)<\varepsilon\text{ for all } i\})>0$
\end{itemize}
\item[(T2)]Existence of tests.\\
Suppose that there exit test functions $(\Phi_n)_{n=1}^\infty$ and constants $C_1,c_1>0$ such that
\begin{itemize}
\item[(i)]$\sum_{n=1}^\infty\E_{\theta_0}\Phi_n<\infty$,
\item[(ii)]$\underset{\theta\in U_\epsilon^c}{\sup}\E_{\theta}(1-\Phi_n)\leq C_1e^{-c_1 n}$,
\end{itemize}
Then for all $\epsilon>0$
$$\Pi(\theta\in U_\epsilon^c|Z_1,\ldots,Z_n)\to 0\hspace{1cm}a.s.[\Prob_{\theta_0}].$$
\end{itemize}
\end{theorem}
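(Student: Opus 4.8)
The plan is to run the classical Schwartz-type argument in the independent, non-identically-distributed setting. First I would write the posterior mass of $U_\epsilon^c$ as a likelihood-ratio quotient,
\begin{equation*}
\Pi(U_\epsilon^c\mid Z_1,\ldots,Z_n)=\frac{\displaystyle\int_{U_\epsilon^c}\prod_{i=1}^n\frac{f_i(Z_i;\theta)}{f_i(Z_i;\theta_0)}\,d\Pi(\theta)}{\displaystyle\int_{\Theta}\prod_{i=1}^n\frac{f_i(Z_i;\theta)}{f_i(Z_i;\theta_0)}\,d\Pi(\theta)}=:\frac{N_n}{D_n},
\end{equation*}
and split the numerator with the tests, $\Pi(U_\epsilon^c\mid Z^n)\le\Phi_n+(1-\Phi_n)N_n/D_n$. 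It then suffices to establish, a.s.\ $[\Prob_{\theta_0}]$, that (a) $\Phi_n\to0$; (b) $(1-\Phi_n)N_n\le e^{-c_1 n/2}$ for all large $n$; and (c) for every $\beta>0$, $D_n\ge e^{-\beta n}$ for all large $n$. Combining (a)--(c) with a choice $\beta<c_1/2$ forces $(1-\Phi_n)N_n/D_n\le e^{-(c_1/2-\beta)n}\to0$, hence $\Pi(U_\epsilon^c\mid Z^n)\to0$.

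Facts (a) and (b) are routine consequences of (T2) and the Borel--Cantelli lemma. For (a), since $0\le\Phi_n\le1$, Markov's inequality and (T2)(i) give $\sum_n\Prob_{\theta_0}(\Phi_n>\delta)\le\delta^{-1}\sum_n\E_{\theta_0}\Phi_n<\infty$ for every $\delta>0$. For (b), the key is the change-of-measure identity $\E_{\theta_0}\big[g(Z^n)\prod_{i=1}^n f_i(Z_i;\theta)/f_i(Z_i;\theta_0)\big]=\E_\theta[g(Z^n)]$: applying it with $g=1-\Phi_n$, then Fubini and (T2)(ii), gives $\E_{\theta_0}[(1-\Phi_n)N_n]=\int_{U_\epsilon^c}\E_\theta(1-\Phi_n)\,d\Pi(\theta)\le C_1 e^{-c_1 n}\Pi(U_\epsilon^c)$, and Markov's inequality plus Borel--Cantelli then yield (b).

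The heart of the matter is the denominator bound (c), which is where (T1) enters. Fix $\beta>0$, put $\varepsilon=\beta/3$, and by (T1)(ii) let $B'=B\cap\{\theta:K_i(\theta_0,\theta)<\varepsilon\text{ for all }i\}$, so $\Pi(B')>0$. Restricting the integral, $D_n\ge\int_{B'}\exp\big(-\sum_{i=1}^n\Upsilon_i(\theta_0,\theta)\big)\,d\Pi(\theta)$, where $\Upsilon_i(\theta_0,\theta)=\log\{f_i(Z_i;\theta_0)/f_i(Z_i;\theta)\}$ is the $i$-th log-likelihood ratio. For fixed $\theta\in B'$ the $\Upsilon_i(\theta_0,\theta)$ are independent under $\Prob_{\theta_0}$ with means $K_i(\theta_0,\theta)<\varepsilon$ and with $\sum_i V_i(\theta_0,\theta)/i^2<\infty$ by (T1)(i), so Kolmogorov's strong law for independent (non-i.i.d.) summands gives $\limsup_n\frac1n\sum_{i=1}^n\Upsilon_i(\theta_0,\theta)\le\varepsilon$ a.s.\ $[\Prob_{\theta_0}]$. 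A Fubini argument then yields one event of full $\Prob_{\theta_0}$-probability on which, for $\Pi$-a.e.\ $\theta\in B'$, $\frac1n\sum_{i=1}^n\Upsilon_i(\theta_0,\theta)\le2\varepsilon$ for all large $n$; on that event Fatou's lemma gives
\begin{equation*}
\liminf_{n\to\infty}e^{3\varepsilon n}D_n\ge\liminf_{n\to\infty}\int_{B'}\exp\!\Big(n\big(3\varepsilon-\tfrac1n\textstyle\sum_{i=1}^n\Upsilon_i(\theta_0,\theta)\big)\Big)\,d\Pi(\theta)\ge\int_{B'}\liminf_{n\to\infty}e^{\varepsilon n}\,d\Pi(\theta)=+\infty,
\end{equation*}
because $\Pi(B')>0$; since $3\varepsilon=\beta$, this is exactly (c).

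I expect the denominator step to be the principal obstacle. It requires the correct strong law for independent but non-identically distributed variables---which is precisely why the hypotheses are phrased through $K_i$ and $\sum_i V_i/i^2$ rather than through a single Kullback--Leibler number---and then two measure-theoretic interchanges that must be handled with care: a Fubini/Tonelli step to replace the family of $\theta$-dependent $\Prob_{\theta_0}$-null sets by a single null set outside of which the limit statement holds for $\Pi$-a.e.\ $\theta$, and Fatou's lemma to pass the $\liminf$ inside the $d\Pi$ integral. The remaining pieces---the Borel--Cantelli arguments for the tests and the final arithmetic with $\beta<c_1/2$---are routine bookkeeping.
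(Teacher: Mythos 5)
Your proof is correct, and it is essentially the standard argument for this result: the paper itself does not prove Theorem \ref{TheoChoi} (it is quoted from Choi and Schervish, 2004), and your proposal reproduces the proof in that reference --- the test-based split of the posterior, Borel--Cantelli via (T2) for the numerator, and the denominator lower bound via Kolmogorov's strong law for independent non-identically distributed summands (which is exactly what (T1)(i)--(ii) are designed for), followed by the Fubini null-set consolidation and Fatou. The only cosmetic remark is that the change-of-measure identity $\E_{\theta_0}\bigl[g\prod_i f_i(Z_i;\theta)/f_i(Z_i;\theta_0)\bigr]=\E_\theta[g]$ is in general an inequality $\leq$ (mass of $\theta$ on the set where $f_i(\cdot;\theta_0)=0$ is lost), but that is the direction you need, so the argument stands as written.
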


\begin{remark}
It is worth noticing that for i.i.d. samples, we recover Schwartz's theorem by replacing condition (T1) with the following condition.
\begin{itemize}
\item[(G1)] The parameter $\theta_0\in\Theta$ is such that for every $\varepsilon>0$,
\begin{eqnarray}
\Pi\left(\theta:\E_{\theta_0}\left(\log\frac{f(Z,\theta_0)}{f(Z,\theta)}<\varepsilon\right)\right)&>&0.\nonumber
\end{eqnarray}
\end{itemize}
in which $f(Z,\theta)$ does not depend on $i$, as we have i.i.d samples. Moreover, we do not need any condition on the variance.
\end{remark}
\section{Framework}
In this section we introduce all the necessary elements prior introducing the main result.
\subsection{Notation}
In order to introduce the notation, we first specify some general assumptions on how the covariates arise in the model.
\begin{itemize}
\item \textbf{[RD] Random design:} We assume the covariates $X$ are distributed according a probability distribution $Q$ on $[0,1]^d$.
\item \textbf{[NRD] Non random design:} In this case, we assume the covariates are fixed. In particular, given the sequence of covariates $(x_{i})_{i\geq 1}$, we state our results as function of their empirical distribution $Q_n=n^{-1}\sum_{i=1}^n\delta_{x_{i}}$.  
\end{itemize}

For the random design, notice that conditioned on $X=x$, the probability density function for the survival lifetimes times is fully specified by the parameter $\theta=(\Omega,\eta_0,\eta_1,\ldots,\eta_d)\in\Theta$, then we use $f_{x}(t,\theta)$ when referring  to it. We use the same notation for cumulative density function $F_x(t,\theta)$, the hazard function $\lambda_{x}(t,\theta)$ and the cumulative hazard function $\Lambda_{x}(t,\theta)$ given the covariate $X=x$. We adopt the same notation for the fixed design case, but understand from the context that $f_x(t,\theta)$ makes reference to the distribution of times under the fixed covariate $x$, i.e $x$ is not random, nor does $f_x(t,\theta)$ denote a conditional density function.

In the same way, for the random design case, we denote the join probability measure on pairs $\{(T_i,X_i)\}_{i\geq 1}$ under the parameter $\theta$, by $\Prob_{\theta}$. We also denote by $\mu_{\theta}(A) = \Prob_{\theta}((T_i,X_i) \in A)$, for any $i$ (remember the pairs $(T_i, X_i)$ are i.i.d), and $A\subseteq \mathbb{R}^{+}\times[0,1]^d$. For the fixed design case, our probability measure depends on $\theta$ as well as on the covariates $\boldsymbol{x} = (x_i)_{i \geq 1}$. We denote by $\Prob_{\theta}(\cdot | \boldsymbol{x})$ the joint distribution of $(T_i)_{i \geq 1}$ associate with the fix sequence of covariates $(x_i)_{i \geq 1}$. Sometimes, to short notation we write $\tilde \Prob_{\theta}(\cdot) = \Prob_{\theta}(\cdot| \boldsymbol{x})$. We define $\tilde \mu_{\theta}^{x_i}([a,b]) = \Prob_{\theta}(T_i \in [a,b]| \boldsymbol{x})$. Note that all measures $\tilde{\mu}_{\theta}^{x_i}$ are potentially different since the distribution of $T_i$ depends on its covariate $x_i$.

\subsection{Metric for Survival Analysis}
We provide a different metric for each design of covariates, \textbf{[RD]} and \textbf{[NRD]}. As we already stated, our object of interest are the survival functions $S_x(t)$ for $x\in [0,1]^d$. Under the further assumption that the covariates $X$ follow a distribution $Q$ on $[0,1]^d$ (\textbf{[RD]}), a sensible way of assessing the quality of our estimators is by considering
\begin{eqnarray}
d_B(\theta,\theta_0)&=&\underset{t\in\mathbb{R}^{+}}{\sup}\left|\int_{B}(S_x(t,\theta)-S_x(t,\theta_0))Q(dx)\right|,
\end{eqnarray}   
with $B\subseteq[0,1]^d$. This election is related with the idea of stratifying the space of covariates and measuring how good is the estimator in the different strata under the distribution $Q$. Hence, it will be important to check the value of $d_B(\theta,\theta_0)$ over a large class of subsets $B$ on $[0,1]^d$. Let $\mathcal{A}$ be the class of rectangles in $\mathbb{R}^{+}\times [0,1]^d$, then we define the loss function,
\begin{eqnarray}
d(\theta,\theta_0)&=&\underset{A\in\mathcal{A}}{\sup}|\mu_{\theta}(A)-\mu_{\theta_0}(A)|.\label{eqn: loss function}
\end{eqnarray}

On the other hand, under the assumption of fixed covariates (\textbf{[NRD]}), we replace the distribution $Q$ by the empirical distribution $Q_n=\sum_{i=1}^n\delta_{x_i}$. In this way, for a set $A=A_1\times A_2\subseteq\mathbb{R}^{+}\times[0,1]^d$, the loss function becomes
\begin{eqnarray}
d_n(\theta,\theta_0)&=&\underset{A\in\mathcal{A}}{\sup}\left|\int_{A_2}(\tilde{\mu}_{\theta}^x(A_1)-\tilde{\mu}_{\theta_0}^x(A_1))Q_n(dx)\right|\nonumber\\
&=&\underset{A\in\mathcal{A}}{\sup}\left|\sum_{i=1}^n\frac{\delta_{x_{i}}(A_2)}{n}(\tilde{\mu}_{\theta}^{x_i}(A_1)-\tilde{\mu}_{\theta_0}^{x_i}(A_1))\right|\label{abuse notation}
\end{eqnarray}

\subsection{Non-stationary Gaussian Processes}
The main difficulty of this work is that the index space $\mathcal{T}$, of the Gaussian processes $(\eta_j(t))_{t\in \mathcal{T}}$ defined in equation  \eqref{eqn:model},  is $\mathbb{R}^{+}$. Up to our knowledge, most of the results proving posterior consistency under a Gaussian process prior use the fact that these processes are defined in a $L^{\infty}$ space of functions, condition which is not satisfied in our case. A typical choice is to put a Gaussian prior on the space of continuous functions on a closed interval, a seen \cite{choi2004posterior}, \cite{ghosal2006posterior} and \cite{tokdar2007posterior}. The reason for this, is that the results need to compute small ball probabilities. These have been vastly studied in the setting of Gaussian measures on $L^{\infty}$, providing good tools for finding appropriate bounds for these probabilities. 

To bypass the problem that our processes are not in  $L^{\infty}(\mathbb{R}^{+})$, we consider a simple modification of the original processes, belonging to this space and thus allowing us to use the machinery of RKHS to deal with small ball probabilities. After that, we translate the results of the modified processes to the original ones.

Define the positive and continuous piecewise function,
\begin{eqnarray}
h_d(t)&=&\left\{ \begin{array}{cc}
\frac{d+1}{1+\log(1-e^{-1})}&t\leq 1\\
\frac{d+1}{t+\log(1-e^{-t})}&t>1
\end{array}\right.\label{eq: h function}
\end{eqnarray}
where $d$ stands for the dimension of the covariate $X$. We define the non-stationary Gaussian processes $(\hat{\eta}_j(t))_{t\geq 0}$, as function of the original Gaussian processes $(\eta_j(t))_{t\geq 0}$ defined in equation \eqref{eqn:model}. Indeed, we define, 
\begin{eqnarray}
\hat{\eta}_j(t)&=&h_d(t)\eta_j(t)\label{eq: non-stationary GP}
\end{eqnarray}
for all $t\geq 0$ and all $j\in\{0,\ldots,d\}$. We prove that the new Gaussian processes 
$(\hat{\eta}_j(t))_{t\geq 0}$ lie in the Banach space of continuous functions tending to zero, endowed with the uniform norm. Moreover, we proved the above space is separable.

\begin{lemma}\label{Lemma: separable}
Let $\mathcal{C}_0$ denote the space of all the continuous functions from $\mathbb{R}^{+}$  to $\mathbb{R}$ tending 0 as $t$ goes to infinity. Then the Banach space $\mathbb{B}=(\mathcal{C}_0,\left\|\right\|_{\infty})$ of functions in $\mathcal{C}_0$ endowed with the uniform norm is separable.
\end{lemma}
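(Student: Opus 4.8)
The plan is to exhibit a countable dense subset of $\mathbb{B} = (\mathcal{C}_0, \|\cdot\|_\infty)$ explicitly, using the one-point compactification of $\mathbb{R}^+$. First I would observe that a continuous function $g\colon\mathbb{R}^+\to\mathbb{R}$ with $\lim_{t\to\infty} g(t) = 0$ extends uniquely to a continuous function $\tilde g$ on the compact space $\mathbb{R}^+\cup\{\infty\}$, which is homeomorphic to $[0,1]$ via, say, $t\mapsto t/(1+t)$ sending $\infty$ to $1$. Under this homeomorphism $\mathbb{B}$ is isometrically isomorphic to the closed subspace $\{h\in C([0,1]) : h(1)=0\}$ of $(C([0,1]),\|\cdot\|_\infty)$. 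Since $C([0,1])$ is separable (e.g.\ by Weierstrass, polynomials with rational coefficients are dense, or more robustly any compact metric space $K$ gives $C(K)$ separable), and a subspace of a separable metric space is separable, the claim follows.

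Concretely, the key steps in order are: (1) define the map $\varphi\colon\mathbb{R}^+\to[0,1)$, $\varphi(t)=t/(1+t)$, and for $g\in\mathcal{C}_0$ define $\Psi(g)\in C([0,1])$ by $\Psi(g)(s) = g(\varphi^{-1}(s))$ for $s\in[0,1)$ and $\Psi(g)(1)=0$; check $\Psi(g)$ is continuous at $s=1$ precisely because $g(t)\to 0$ as $t\to\infty$, and that $\Psi$ is a linear isometry onto $V := \{h\in C([0,1]) : h(1)=0\}$. (2) Invoke separability of $C([0,1])$: the set $D$ of polynomials with rational coefficients is a countable dense subset. (3) Pass to the subspace: for each $p\in D$ the function $p - p(1)$ lies in $V$, and $\{p - p(1) : p\in D\}$ is a countable dense subset of $V$ — density holds because given $h\in V$ and $\varepsilon>0$, pick $p\in D$ with $\|p-h\|_\infty<\varepsilon/2$, and then $\|(p-p(1)) - h\|_\infty \le \|p-h\|_\infty + |p(1)| = \|p-h\|_\infty + |p(1)-h(1)| < \varepsilon$. (4) Transport back: $\{\Psi^{-1}(p-p(1)) : p\in D\}$ is a countable dense subset of $\mathbb{B}$, since $\Psi^{-1}$ is an isometry.

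There is really no serious obstacle here; the only point requiring a little care is the continuity of the extension at the point at infinity (step 1), i.e.\ verifying that the $\mathcal{C}_0$ condition is exactly what is needed for $\Psi(g)$ to be genuinely in $C([0,1])$ rather than merely bounded. Alternatively, one could avoid the homeomorphism entirely and argue directly: fix a countable dense family of "bump-like" piecewise-linear functions with rational nodes and rational values that vanish outside a bounded interval, note these lie in $\mathcal{C}_0$, and show any $g\in\mathcal{C}_0$ is uniformly approximable by such — first truncate $g$ to be $0$ beyond a large $T$ (using $g(t)\to0$) at the cost of $\sup_{t\ge T}|g(t)|$, then use uniform continuity of the truncation on $[0,T]$ to approximate by a rational piecewise-linear interpolant. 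Either route gives the result; I would present the compactification argument as it is shortest and makes the isometry with a subspace of $C([0,1])$ transparent.
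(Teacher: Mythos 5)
Your proof is correct, but it takes a genuinely different route from the paper. You transport the whole problem to a compact interval: the one-point compactification of $\mathbb{R}^{+}$ via $t\mapsto t/(1+t)$ gives a linear isometry of $\mathbb{B}$ onto the closed subspace $\{h\in C([0,1]):h(1)=0\}$, and then separability is inherited from $C([0,1])$ (rational polynomials, corrected by subtracting $p(1)$). The paper instead works directly on $\mathbb{R}^{+}$: it defines, for each $n$, the family $D_0^n$ of functions that are continuous on $[0,n]$ and decay as $g(n)/(t+1-n)$ for $t\geq n$, shows any $f\in\mathcal{C}_0$ is uniformly approximated by such a function (truncating where $|f|$ is small), and then gets countable dense subsets of each $D_0^n$ from separability of $C[0,n]$, taking the union over $n$. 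The two arguments use the same underlying facts (separability of continuous functions on a compact set, plus the vanishing-at-infinity hypothesis to control the tail), but your version packages the tail control once and for all in the continuity of the extension at the point at infinity, which makes the isometric identification transparent and the proof shorter; the paper's version is more hands-on, exhibiting explicit approximants on $\mathbb{R}^{+}$ itself without a change of variables. One small virtue of your write-up is that the countable dense set is produced cleanly in one step, whereas the paper's construction requires assembling countable dense subsets of each $D_0^n$ separately before taking the countable union. Either way the statement is established; your only point of care, as you note, is verifying continuity of $\Psi(g)$ at $s=1$, which is exactly the $\mathcal{C}_0$ condition.
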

\begin{lemma}\label{Lemma: bounded supremum}
The set of sample paths of $\hat\eta_i$ is subset of the separable Banach space $\mathbb{B}=(\mathcal{C}_0,\left\|\right\|_{\infty})$ for every $i\in\{0,\ldots,d\}$.
\end{lemma}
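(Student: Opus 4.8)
The plan is to check the two defining properties of $\mathbb{B}=(\mathcal{C}_0,\|\cdot\|_\infty)$ for the sample paths $t\mapsto\hat\eta_j(t)=h_d(t)\eta_j(t)$: continuity on $\mathbb{R}^+$, and convergence to $0$ at infinity. First I would record the elementary properties of the deterministic weight $h_d$. A short computation shows that $t+\log(1-e^{-t})>0$ for every $t>0$ (its derivative equals $1/(1-e^{-t})>0$, and its value at $t=1$ is $1+\log(1-e^{-1})>0$ since $e>2$), that the two branches of $h_d$ agree at $t=1$, and hence that $h_d$ is a strictly positive continuous function on $\mathbb{R}^+$; moreover, since $\log(1-e^{-t})$ is increasing, $h_d(t)\le (d+1)/(t-c_0)$ for $t>1$ with $c_0=-\log(1-e^{-1})\in(0,1)$, so $h_d(t)=\BigO(1/t)$ as $t\to\infty$. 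Because $\eta_j$ has almost surely continuous sample paths — which I take from the regularity assumed on the stationary kernel $\kappa_j$, and which in turn forces $\E\sup_{t\in[0,1]}|\eta_j(t)|<\infty$ by Fernique's theorem — the product $\hat\eta_j=h_d\cdot\eta_j$ has almost surely continuous sample paths, which settles the first property.

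The substantive step is to show $\hat\eta_j(t)\to 0$ almost surely as $t\to\infty$; since $h_d(t)=\BigO(1/t)$, this reduces to the claim $\eta_j(t)/t\to 0$ almost surely. To prove it I would partition $\mathbb{R}^+$ into the unit blocks $[n,n+1]$ and set $M_n=\sup_{t\in[n,n+1]}|\eta_j(t)|$. Stationarity gives $M_n\overset{d}{=}M_0$ with $\E M_0<\infty$ as above, and $\sup_{t\in[n,n+1]}\Var(\eta_j(t))=\kappa_j(0)$, so the Borell--TIS inequality yields, for every $u>0$,
$$\Pr\!\left(M_n\ge \E M_0+u\right)\le 2\exp\!\left(-\frac{u^2}{2\kappa_j(0)}\right).$$
Fixing $\epsilon>0$ and taking $n$ large enough that $\epsilon n\ge 2\E M_0$ bounds $\Pr(M_n\ge \epsilon n)$ by $2\exp(-\epsilon^2 n^2/(8\kappa_j(0)))$, which is summable in $n$; by Borel--Cantelli, $M_n<\epsilon n$ for all but finitely many $n$, almost surely. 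Since for $t\in[n,n+1]$ one has $|\eta_j(t)|/t\le M_n/n$, this gives $\limsup_{t\to\infty}|\eta_j(t)|/t\le\epsilon$ almost surely, and intersecting over a sequence $\epsilon_k\downarrow 0$ yields $\eta_j(t)/t\to 0$ almost surely.

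Putting the pieces together, almost surely $|\hat\eta_j(t)|=h_d(t)|\eta_j(t)|\le C\,|\eta_j(t)|/t\to 0$, so $\hat\eta_j\in\mathcal{C}_0$, and combined with continuity this places (almost every) sample path of $\hat\eta_j$ in $\mathbb{B}$ for each $j\in\{0,\ldots,d\}$; one may then work with a version all of whose paths lie in $\mathbb{B}$, a separable Banach space by Lemma~\ref{Lemma: separable}. I expect the only genuine obstacle to be the uniform-in-$t$ control in the second step: a pointwise second-moment estimate for $\eta_j(t)$ is not enough for an almost sure statement, which is exactly why the argument runs through a maximal inequality (Borell--TIS) on blocks together with Borel--Cantelli rather than a bare Chebyshev bound; a secondary point to be handled carefully is that almost sure sample-path continuity of the stationary process $\eta_j$ — hence finiteness of $\E\sup_{[0,1]}|\eta_j|$ — must be ensured by the hypotheses imposed on $\kappa_j$.
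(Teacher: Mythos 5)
Your proof is correct, but it takes a genuinely different route from the paper. The paper deduces this lemma from its quantitative tail bound (Lemma \ref{Lemma: Sup tail greater than M}): under assumption (A1) it controls $\sup_{t\ge\tau}|\eta_j(t)h_d(t)|$ by chopping $[\tau,\infty)$ into unit blocks, bounding the sup on each block through the dyadic-increment inequality of Lemma \ref{Lemma:sup ineq 1} and the pointwise Gaussian concentration inequality (this is where the kernel condition $(\kappa(0)-\kappa(2^{-n}))^{-1}\ge n^6$ enters), and then it applies Borel--Cantelli along a suitably chosen sequence of times $\tau_{j,i}$ to get $\sup_{t\ge\tau}|\hat\eta_j(t)|\to 0$ almost surely. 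You instead exploit stationarity directly: on unit blocks $[n,n+1]$ the sup $M_n$ has the law of $M_0$, finiteness of $\E M_0$ comes from a.s. continuity via Fernique, and the Borell--TIS inequality plus Borel--Cantelli gives $\eta_j(t)/t\to 0$ a.s., which combined with $h_d(t)=\BigO(1/t)$ suffices. Your argument is shorter, uses only standard Gaussian maximal inequalities, and notably does not use assumption (A1) at all, showing that (A1) is not needed for this particular lemma; the trade-off is that the paper's chaining construction is elementary and self-contained, and its by-product, the explicit decay function $p_{d,M,\kappa}(\tau)$ of Lemma \ref{Lemma: Sup tail greater than M}, is reused later (in the proof of Lemma \ref{Lemma: Psotivity Joint}), so the paper gets this lemma essentially for free from machinery it needs anyway. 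One small inaccuracy to fix: your claim that $t+\log(1-e^{-t})>0$ for every $t>0$ is false near $t=0$ (the expression tends to $-\infty$); monotonicity plus the value at $t=1$ only gives positivity for $t\ge 1$, which is all you need since $h_d$ is constant on $[0,1]$, so the conclusion that $h_d$ is positive, continuous and $\BigO(1/t)$ is unaffected.
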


\subsection{Assumptions}\label{section: assumptions}
In this section, we make some assumptions over the prior $\Pi$ and the parameter space $\Theta$, which allow us to prove posterior consistency. Assumptions (A1), (A2) and (A4) will be common to both covariates designs, 
while the third assumption will be  design-specific. Indeed, (A3) will be used for the random design case (\textbf{[RD]}), and (A3') will be used for the fixed design case (\textbf{[NRD]}).

\begin{itemize}
\item[(A1)] The stationary covariance function $\kappa_j(t)$ is such that $(\kappa_j(0)-\kappa_j(2^{-n}))^{-1}\geq n^6$ for all $j\in\{0,\ldots,d\}$.
\item[(A2)] In addition $\nu$ assigns positive probability to every neighbourhood of the true parameter $\Omega_0$, i.e. for every $\delta>0$, $\nu\left(\left|\frac{\Omega}{\Omega_0}-1\right|<\delta\right)>0$.
\item[(A3)]Finite first moment under the distribution of the true parameter, i.e. 
\begin{eqnarray}
\E_{\theta_0}(T)=\E(\E_{\theta_0}(T)|X=x))
=\int_{[0,1]^d}\int_{0}^{\infty}tf_x(t,\theta_0)dtQ(dx)<M<\infty\nonumber
\end{eqnarray}
\item[(A3')]We assume that given $\delta>0$, there exits $M\in[0,\infty)$ such that 
\begin{eqnarray}
\E_{\theta_0}(T^2\textbf{1}{\{T>M\}}|x)&=&\int_M^\infty t^2f_x(t,\theta_0)dt<\delta\nonumber
\end{eqnarray}
for all $x\in[0,1]^d$, i.e. the class of random variables $T^2$ indexed in the covariate $x\in[0,1]^d$ is uniformly integrable under the true parameter $\theta_0$. 

\item[(A4)]The true parameters $\eta_{j,0}$ take the form $\hat{\eta}_{i,0}/h_d$, where $\hat{\eta}_{j,0}$ is in the support of $\hat\eta_i$ under the uniform norm, for all $j\in\{1,\ldots,d\}$.
\end{itemize}

\section{Main Result}
\textbf{Random design case}

We first state the posterior consistency results under the loss function given by equation \eqref{eqn: loss function}, for the case where the covariates are drawn from the probability distribution $Q$. Observe that there is not an unique interpretation for this result under this assumption. For instance, a first approach is to consider the covariates $X$ as data generated from our model under the distribution $Q$. In this way, the first posterior consistency result we can think of, is in terms of the joint probability distribution $\Prob_{\theta_0}$ of the pairs $((T_i,X_i))_{i\geq 1}$. This is stated in the following theorem.\\

\begin{theorem}[]\label{THMain 1} Let $((T_i,X_i))_{i=1}^{\infty}$ be a sequence of independent random variables, where $T_i\in\mathbb{R}^{+}$ denotes a random survival time, and $X_i\in[0,1]^d$ its associated covariate. Suppose that the covariates arise according a random design with probability distribution $Q$ on $[0,1]^d$, and that conditioned on $X_i$, the random failure times $T_i$ are generated according the model described in equation \eqref{eqn:model}. Let $\Prob_{\theta_0}$ denote the joint probability measure of $((T_i,X_i))_{i=1}^{\infty}$ under the true parameter $\theta_0$. Assume that assumptions (A1), (A2), (A3) and (A4) are satisfied, then for every $\epsilon>0$,
\begin{eqnarray}
\lim_{n\to\infty}\Pi\left(d(\theta,\theta_0)>\epsilon|(T_1,X_1),\ldots,(T_n,X_n)\right)=0,\hspace{0.5cm}\Prob_{\theta_0}\hspace{0.1cm}a.s,\nonumber
\end{eqnarray}
with $d(\theta,\theta_0)$ defined in equation \eqref{eqn: loss function}.
\end{theorem}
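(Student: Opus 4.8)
The plan is to verify the two hypotheses (T1) and (T2) of Theorem~\ref{TheoChoi} for the sequence $Z_i = (T_i, X_i)$, with densities $f_i(z;\theta) = f_x(t,\theta) Q(dx)$ (with respect to Lebesgue $\times\, Q$), and then invoke that theorem directly. Since the pairs are i.i.d., $K_i$ and $V_i$ do not depend on $i$, so (T1)(i) reduces to finiteness of a single variance and (T1)(ii) to the prior positivity of Kullback--Leibler neighborhoods; effectively we are in the situation of condition (G1) of the Remark. So the proof splits into two essentially independent pieces: a \emph{KL-support} statement and a \emph{testing} statement.

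For the testing condition (T2), the key is that the loss $d(\theta,\theta_0) = \sup_{A\in\mathcal{A}}|\mu_\theta(A)-\mu_{\theta_0}(A)|$ is exactly the supremum of the empirical-process discrepancy over the class $\mathcal A$ of rectangles in $\mathbb R^+\times[0,1]^d$. Rectangles form a VC class, so by Vapnik--Chervonenkis theory the empirical measure $\mu_n = n^{-1}\sum_{i\le n}\delta_{(T_i,X_i)}$ satisfies $\sup_{A\in\mathcal A}|\mu_n(A)-\mu_{\theta_0}(A)| \to 0$ with exponential concentration (a Dvoretzky--Kiefer--Wolfowitz / bounded-differences bound gives $\Pr_{\theta_0}(\sup_A|\mu_n(A)-\mu_{\theta_0}(A)|>\epsilon/2)\le C_1 e^{-c_1 n}$). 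Define $\Phi_n = \mathbf 1\{\sup_{A\in\mathcal A}|\mu_n(A)-\mu_{\theta_0}(A)| > \epsilon/2\}$. Then $\E_{\theta_0}\Phi_n$ is summable, giving (T2)(i); and if $d(\theta,\theta_0)>\epsilon$ then on $\{\Phi_n=0\}$ triangle inequality forces $\sup_A|\mu_n(A)-\mu_\theta(A)| > \epsilon/2$, so $\E_\theta(1-\Phi_n)\le C_1 e^{-c_1 n}$ uniformly over $U_\epsilon^c$, which is (T2)(ii). This part is mostly bookkeeping once one notes rectangles are VC.

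The real work is (T1)(ii): showing $\Pi$ puts positive mass on a set $B$ of $\theta=(\Omega,\eta_0,\dots,\eta_d)$ on which $K(\theta_0,\theta) = \E_{\theta_0}\log\frac{f_X(T,\theta_0)}{f_X(T,\theta)}$ is small and the variance is finite. Writing the log-likelihood ratio for the survival model in terms of hazards, $\log\frac{f_x(t,\theta_0)}{f_x(t,\theta)} = \log\frac{\lambda_{x,0}(t)}{\lambda_x(t)} - \int_0^t(\lambda_{x,0}(s)-\lambda_x(s))ds$, one bounds the KL divergence by quantities controlled by $\sup_{t,x}|\lambda_{x,0}(t)-\lambda_x(t)|$ together with the first-moment assumption (A3) (which makes $\E_{\theta_0}\int_0^T(\cdots)ds$ finite and controls the tail contribution of the cumulative-hazard term). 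Because $\sigma$ is $1$-Lipschitz and $\Omega$ enters multiplicatively, it suffices to make $|\Omega-\Omega_0|$ small --- handled by (A2) --- and to make $\sup_t|\eta_j(t)-\eta_{j,0}(t)|$ small simultaneously for $j=0,\dots,d$. Here is where the passage to the modified processes $\hat\eta_j = h_d\,\eta_j$ matters: by Lemmas~\ref{Lemma: separable} and~\ref{Lemma: bounded supremum} these live in the separable Banach space $(\mathcal C_0,\|\cdot\|_\infty)$, so by (A4) $\hat\eta_{j,0}$ is in the support of $\hat\eta_j$, and standard RKHS / small-ball theory for Gaussian measures on separable Banach spaces (the Cameron--Martin / concentration argument, using (A1) to guarantee the requisite small-ball exponent) gives $\Pi(\|\hat\eta_j - \hat\eta_{j,0}\|_\infty < \delta) > 0$ for every $\delta$; dividing by the fixed positive continuous function $h_d$ transfers this to a uniform neighborhood of $\eta_{j,0}$ on $\mathbb R^+$. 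Intersecting these events over $j$ and with the $\Omega$-event yields $B$ with $\Pi(B)>0$ on which $K(\theta_0,\theta)<\varepsilon$; the variance $V(\theta_0,\theta)$ is finite on $B$ by a similar second-moment estimate (again leaning on (A3)). The main obstacle is precisely this KL-positivity step: controlling the cumulative-hazard integral $\int_0^t(\lambda_{x,0}-\lambda_x)$ uniformly in $x$ over the unbounded time axis, which is exactly why the $h_d$-rescaling and the $o(T)$-type assumption (A1) on the kernels are needed, and why one cannot simply quote an off-the-shelf $L^\infty[0,1]$ Gaussian-process consistency result.
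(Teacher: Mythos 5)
Your overall architecture (reduce to Schwartz's condition (G1) for the i.i.d.\ pairs, VC-based tests over the rectangle class for (T2), prior positivity of a Kullback--Leibler neighbourhood for the support condition) is the paper's, and your testing step is essentially Lemma \ref{Lemma: Testing Conditions without censoring}. The genuine gap is in the KL-positivity step. You reduce the problem to making $\sup_{t\in\mathbb{R}^{+}}|\eta_j(t)-\eta_{j,0}(t)|$ small for all $j$, and you claim that $\Pi\left(\|\hat\eta_j-\hat\eta_{j,0}\|_\infty<\delta\right)>0$ (which indeed follows from (A4) and Lemmas \ref{Lemma: separable}--\ref{Lemma: bounded supremum}) ``transfers to a uniform neighbourhood of $\eta_{j,0}$ on $\mathbb{R}^{+}$ by dividing by the fixed positive continuous function $h_d$''. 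That transfer fails: $h_d$ is \emph{not} bounded away from zero --- by \eqref{eq: h function}, $h_d(t)\to 0$ and $1/h_d(t)\sim t/(d+1)$ as $t\to\infty$ --- so $\|\hat\eta_j-\hat\eta_{j,0}\|_\infty<\delta$ only yields $|\eta_j(t)-\eta_{j,0}(t)|<\delta/h_d(t)$, a bound growing linearly in $t$, not a uniform one. Worse, the event you actually need, uniform closeness of a stationary Gaussian process to $\eta_{j,0}$ over the whole half-line, typically carries prior mass zero (for stationary kernels with decaying correlations, e.g.\ Ornstein--Uhlenbeck or squared exponential, $\sup_{t\ge 0}|\eta_j(t)|=\infty$ a.s.), so the set $B$ you construct would satisfy $\Pi(B)=0$ and Theorem \ref{TheoChoi} could not be invoked.

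The paper is built precisely to avoid this: the set $B_{\delta,\tau}$ in \eqref{eqn: setB} asks for closeness $|\eta_j-\eta_{j,0}|\le\delta/(1+\tau)$ only on the compact $[0,\tau]$ (where $1/h_d$ \emph{is} bounded, which is why the $\hat\eta$-to-$\eta$ transfer works there), plus the weak one-sided tail condition $\inf_{t>\tau}\{\eta_j(t)h_d(t)\}>-1$. The KL integral is then split at $\tau$ (Lemma \ref{Lemma: KL1}): on $[0,\tau]$ one uses the Lipschitz bounds of Lemma \ref{Lemma: Bound KL with the sup in [0,tau]} together with (A2)--(A3); on $(\tau,\infty)$ one never controls $|\lambda_x-\lambda_{x,0}|$ at all, but uses the tail condition to get $Y_x(t)\ge -t-\log(1-e^{-t})$, hence $0\ge\log\sigma(Y_x(t))\ge -t$, so the tail contribution is bounded by $\E_{\theta_0}(T\mathbf 1\{T>\tau\})$-type quantities, made small by taking $\tau$ large under (A3). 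If you want to keep a genuinely uniform-in-$\hat\eta$ ball instead, you must redo the KL bound with the growing factor $\delta/h_d(t)$, and then the cumulative-hazard term produces $\delta T^2$-type quantities that (A3) does not control without an additional truncation/dominated-convergence argument; as written, your bound via $\sup_{t,x}|\lambda_{x,0}(t)-\lambda_x(t)|$ does not go through. A minor further inaccuracy: (A1) is the local increment condition $(\kappa_j(0)-\kappa_j(2^{-n}))^{-1}\ge n^6$ used for modulus-of-continuity concentration (Lemmas \ref{Lemma: Sup tail greater than M} and \ref{Lemma: Positivity [0,tau]}); the $o(T)$ condition on $\int_0^T\kappa_j$ is a separate model-well-definedness assumption, not the driver of the small-ball estimates.
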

Another approach is by taking covariates that effectively follow a distribution $Q$ but are not considered to be generated by the model. To make an explicit distinction between this case and the latter, we denote the posterior distribution by a different letter $\Pi_2$. Notice that the likelihood under this approach is different than the one in the first case, as we consider the conditional distribution of the times $T$ given the covariates $X$, but under the reasonable assumption that the distribution $Q$ is unrelated to the parameter $\theta$, we recover exactly the same posterior distribution. A direct consequence of this, is that Theorem \ref{THMain 1} can be used to deduce the following corollary. 

\begin{corollary}
By the definition of conditional expectation, the latter result can be interpreted as that for every $\epsilon>0$,
\begin{eqnarray}
\E\left(\Prob_{\theta_0}\left(\lim_{n\to\infty}\Pi_2\left(d(\theta,\theta_0)>\epsilon|(T_1,X_1),\ldots,(T_n,X_n)\right)=0\given (X_n)_{n\geq 1}\right)\right)=1,\nonumber
\end{eqnarray}
which implies 
\begin{eqnarray}
Q\left(\Prob_{\theta_0}\left(\lim_{n\to\infty}\Pi_2\left(d(\theta,\theta_0)>\epsilon|(T_1,X_1),\ldots,(T_n,X_n)\right)=0\given (X_n)_{n\geq 1}\right)=1\right)=1,\nonumber
\end{eqnarray}
i.e. $Q$ almost surely convergence of the joint conditional distribution of the times $(T_i)_{i\geq 1}$ given the covariates $(X_i)_{i\geq 1}$ under the assumption of the true parameter $\theta_0$. 
\end{corollary}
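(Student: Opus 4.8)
The plan is to reduce the statement to a routine disintegration of $\Prob_{\theta_0}$ along the covariate sequence, once we observe that Theorem~\ref{THMain 1} has already done all the analytic work. First I would record the identity of the two posteriors. Writing the joint density of $((T_i,X_i))_{i=1}^n$ in the first (``generative'') model as $\prod_{i=1}^n f_{x_i}(t_i,\theta)\,q(x_i)$, where $q$ is the density of $Q$, the factor $\prod_{i=1}^n q(x_i)$ does not involve $\theta$ and therefore cancels between the numerator and the denominator of Bayes' formula; under the $\Pi_2$ approach that factor is simply absent. Either way one gets $\Pi_2\big(\cdot\mid (T_1,X_1),\dots,(T_n,X_n)\big)=\Pi\big(\cdot\mid (T_1,X_1),\dots,(T_n,X_n)\big)$ for $\Prob_{\theta_0}$-almost every data sequence, so the conclusion of Theorem~\ref{THMain 1} holds verbatim with $\Pi$ replaced by $\Pi_2$.

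Next, fix $\epsilon>0$ and set
\[
E_\epsilon=\Big\{\,((t_i,x_i))_{i\ge 1}:\ \lim_{n\to\infty}\Pi_2\big(d(\theta,\theta_0)>\epsilon\ \big|\ (t_1,x_1),\dots,(t_n,x_n)\big)=0\,\Big\}.
\]
This set is measurable: by the joint-measurability hypothesis on the densities in Theorem~\ref{TheoChoi} each finite-$n$ posterior probability is a measurable function of the data, and $E_\epsilon$ is built from these through countable $\liminf/\limsup$ operations. Theorem~\ref{THMain 1}, in the $\Pi_2$ form just noted, states precisely that $\Prob_{\theta_0}(E_\epsilon)=1$.

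Now I would disintegrate. Since $\mathbb{R}^{+}\times[0,1]^d$ is Polish, a regular conditional probability $\Prob_{\theta_0}(\cdot\mid (X_n)_{n\ge 1})$ exists; and conditioning on the whole covariate sequence freezes the $X_i$ and leaves the $T_i$ independent with $T_i\sim\tilde\mu^{x_i}_{\theta_0}$, so this regular conditional probability is exactly the fixed-design law $\tilde\Prob_{\theta_0}(\cdot)=\Prob_{\theta_0}(\cdot\mid\boldsymbol{x})$ of Section~4.1. The tower property then yields
\[
1=\Prob_{\theta_0}(E_\epsilon)=\E\!\left(\Prob_{\theta_0}\big(E_\epsilon\ \big|\ (X_n)_{n\ge 1}\big)\right),
\]
which is the first displayed identity of the Corollary (note that, given $(X_n)_{n\ge1}=\boldsymbol{x}$, the event $E_\epsilon$ becomes the $\tilde\Prob_{\theta_0}$-event that the fixed-design posterior $\Pi_2(d(\theta,\theta_0)>\epsilon\mid T_1,\dots,T_n;\boldsymbol{x})$ vanishes). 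Since the integrand lies in $[0,1]$ and integrates to $1$, it equals $1$ almost surely with respect to the law of $(X_n)_{n\ge1}$ under $\Prob_{\theta_0}$, i.e. with respect to the product measure $Q^{\infty}$ on $([0,1]^d)^{\infty}$; writing that marginal as $Q$ in the paper's shorthand gives the second displayed identity, hence the claimed $Q$-a.s.\ convergence of the conditional (fixed-design) posterior.

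There is no hard estimate here; the content of the Corollary is entirely measure-theoretic. The only points requiring care are (i) the cancellation of the covariate likelihood, which relies on the standing assumption that $Q$ is unrelated to $\theta$, and (ii) the legitimacy of the conditioning — existence of the regular conditional distribution and measurability of $E_\epsilon$ — both guaranteed by the Polish structure of the sample space and the joint measurability of the model densities already assumed in Theorem~\ref{TheoChoi}. If one wants the conclusion simultaneously for all $\epsilon>0$, intersect the full-measure sets obtained for a sequence $\epsilon\downarrow 0$.
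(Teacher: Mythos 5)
Your argument is correct and follows the same route the paper itself indicates: the covariate likelihood cancels because $Q$ is unrelated to $\theta$, so $\Pi_2$ coincides with $\Pi$, and then Theorem \ref{THMain 1} combined with the tower property and the fact that a $[0,1]$-valued conditional probability with expectation $1$ equals $1$ almost surely gives both displays. Your write-up merely makes explicit the measurability of the limit event and the disintegration via a regular conditional distribution, which the paper leaves implicit.
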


The last approach is to consider a random design in which the distribution $Q$ is an unknown parameter. Following the Bayesian approach, we specify a prior over $Q$ and assume that the distribution $Q$ is unrelated to $\theta$. It follows that the likelihood for $\theta$ can be separated out from that of $Q$. Then, if $\theta$ and $Q$ are independent a priori, so they are a posteriori. Assuming a true parameter $Q_0$, the independence of $\theta$ and $Q$ allows us to exactly replicate the proof of Theorem \ref{THMain 1} for this case, when assumption (A3) holds for $Q_0$. Notice that in this case we assume the covariates are generated by $Q_0$.

\textbf{Non-random design case}

We consider the version of the theorem for the non-random design approach.
\begin{theorem}[Consistency on the Real line]\label{THMain 2} Let $(T_i)_{i\geq 1}$ be a sequence of independent random variables on $\mathbb{R}^{+}$ denoting survival lifetimes. Additionally consider the sequence of associated fixed covariates $(x_{i})_{i\geq 1}$. Suppose that the distribution of  random survival lifetimes $T$ for a fixed value of a covariate $x$ is generated according the model described in equation \eqref{eqn:model}. Let $\tilde{\Prob}_{\theta_0}$ define the joint probability measure of $(T_i)_{i\geq 1}$ under the true parameter $\theta_0$. Assume that assumptions (A1), (A2), (A3') and (A4) are satisfied, then for every $\epsilon>0$,
\begin{eqnarray}
\lim_{n\to\infty}\Pi\left(d_n(\theta,\theta_0)>\epsilon|(T_1,x_1),\ldots,(T_n,x_n)\right)=0,\hspace{0.5cm}\tilde{\Prob}_{\theta_0}\hspace{0.1cm}a.s,\nonumber
\end{eqnarray}
with $d_n(\theta,\theta_0)$ defined in equation \eqref{abuse notation}. 
\end{theorem}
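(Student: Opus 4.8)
The plan is to apply Theorem~\ref{TheoChoi} with $Z_i=T_i$, densities $f_i(\cdot;\theta)=f_{x_i}(\cdot,\theta)$, true value $\theta_0$, and the loss $d_n$ of \eqref{abuse notation}; the conclusion then is exactly the assertion of Theorem~\ref{THMain 2}. One should first note that Choi and Schervish's argument tolerates the loss being indexed by $n$: the test $\Phi_n$ and the type-II bound are already permitted to depend on $n$, and the conclusion reads $\Pi(\theta: d_n(\theta,\theta_0)>\epsilon\mid T_{1:n})\to0$, which is what is wanted. So it suffices to verify (T1) and (T2) for this setup.

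For (T2) I would exploit that the class $\mathcal A$ of rectangles of $\mathbb{R}^{+}\times[0,1]^d$ is a Vapnik--Chervonenkis class of fixed, finite ($d$-dependent) index. Write $\widehat P_n=\tfrac1n\sum_{i\le n}\delta_{(T_i,x_i)}$ for the empirical measure and $\overline\mu_{\theta,n}(A)=\tfrac1n\sum_{i\le n}\delta_{x_i}(A_2)\,\tilde\mu_\theta^{x_i}(A_1)$ for its $\Prob_\theta(\cdot\mid\boldsymbol x)$-mean, so that by \eqref{abuse notation} $d_n(\theta,\theta_0)=\sup_{A\in\mathcal A}|\overline\mu_{\theta,n}(A)-\overline\mu_{\theta_0,n}(A)|$. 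Take $\Phi_n=\indi{\sup_{A\in\mathcal A}|\widehat P_n(A)-\overline\mu_{\theta_0,n}(A)|>\epsilon/2}$. Since $A\mapsto\widehat P_n(A)-\overline\mu_{\theta,n}(A)$ changes by at most $1/n$ when one datum is moved, McDiarmid's inequality together with a symmetrization and VC bound on the expected uniform deviation (valid for independent, not necessarily identically distributed, data, and uniform in $\theta$ and in $\boldsymbol x$ because the VC index is universal) gives, for $n$ past a threshold, $\Prob_\theta(\sup_{A\in\mathcal A}|\widehat P_n(A)-\overline\mu_{\theta,n}(A)|>\epsilon/2)\le C_1e^{-c_1 n\epsilon^2}$. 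Applying this at $\theta=\theta_0$ yields $\sum_n\E_{\theta_0}\Phi_n<\infty$; applying it at an arbitrary $\theta$ with $d_n(\theta,\theta_0)>\epsilon$ and using the triangle inequality (off that event, a near-maximizing rectangle $A^\ast$ of $d_n(\theta,\theta_0)>\epsilon$ satisfies $|\widehat P_n(A^\ast)-\overline\mu_{\theta_0,n}(A^\ast)|>\epsilon/2$, so $\Phi_n=1$) yields $\sup_{\theta\in U_\epsilon^c}\E_\theta(1-\Phi_n)\le C_1e^{-c_1n\epsilon^2}$; the finitely many small $n$ are absorbed into the constants by setting $\Phi_n=1$ there.

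For (T1) I would take
\[
B=\bigl\{(\Omega,\eta_0,\dots,\eta_d):\ |\log(\Omega_0/\Omega)|<1,\ \|\hat\eta_j-\hat\eta_{j,0}\|_\infty<1\ \text{for all }j\bigr\},
\]
where $\hat\eta_j=h_d\eta_j$ and $\hat\eta_{j,0}=h_d\eta_{j,0}$. By Lemmas~\ref{Lemma: separable} and \ref{Lemma: bounded supremum} (where assumption (A1) enters), each $\hat\eta_j$ is a Gaussian random element of the separable Banach space $\mathbb B=(\mathcal C_0,\|\cdot\|_\infty)$, so its topological support is the closure of its RKHS; by (A4) $\hat\eta_{j,0}$ lies in this support, hence every uniform ball around it has positive mass, and by (A2) the same holds for the $\Omega$-coordinate. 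By the prior independence of $\Omega,\eta_0,\dots,\eta_d$, $\Pi(B)>0$, and the same argument gives positive mass to the smaller set $B_\delta$ obtained by replacing $1$ with $\delta$. To control $K_i$ and $V_i$ on these sets I would use that $\sigma$ is $\tfrac14$-Lipschitz and $\log\sigma$ is $1$-Lipschitz, and that $h_d(t)^{-1}=(d+1)^{-1}\log(e^{t}-1)\le (d+1)^{-1}t$ for $t>1$, so that $\|\hat\eta_j-\hat\eta_{j,0}\|_\infty\le\delta$ forces $|g_x(t,\theta)-g_x(t,\theta_0)|\le\delta\log(e^t-1)$ for $t>1$ (and $\le\delta\log(e-1)$ for $t\le1$), uniformly in $x\in[0,1]^d$, where $g_x=\eta_0+\sum_{j\ge1}x_j\eta_j$. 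Splitting $\log\frac{f_x(t,\theta_0)}{f_x(t,\theta)}=[\log\lambda_x(t,\theta_0)-\log\lambda_x(t,\theta)]+[\Lambda_x(t,\theta)-\Lambda_x(t,\theta_0)]$, the first bracket is $\le|\log(\Omega_0/\Omega)|+\delta\log(e^t-1)$ and the second is $\le \Omega\!\int_0^t\min\{1,\tfrac{\delta}{4}(1+s)\}\,ds+\tfrac{|\Omega-\Omega_0|}{\Omega_0}\Lambda_{x,0}(t)\le \Omega t+\tfrac{|\Omega-\Omega_0|}{\Omega_0}\Lambda_{x,0}(t)$; the clipping $\int_0^t\min\{1,c(1+s)\}\,ds\le t$ is the decisive point, because it means that taking $\E_{\theta_0}((\cdot)^2\mid x)$ requires only a \emph{second} moment of $T$, not a fourth. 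Using $\E_{\theta_0}(T\mid x),\E_{\theta_0}(T^2\mid x)$ bounded uniformly in $x$ by (A3') (split at $M$: tail $<\delta$, head $\le M^2$), and the substitution $u=\Lambda_{x,0}(t)$ giving $\E_{\theta_0}(\Lambda_{x,0}(T)^k\mid x)=\int_0^{\Lambda_{x,0}(\infty)}u^ke^{-u}\,du\le k!$ (so properness of the law of $T$ under $\theta_0$ is not needed), one obtains $\sup_i V_i(\theta_0,\theta)<\infty$ for every $\theta\in B$, hence $\sum_i V_i/i^2<\infty$, and $\sup_i K_i(\theta_0,\theta)\le C_0(\delta+|\log(\Omega_0/\Omega)|)$ for $\theta\in B_\delta$; given $\varepsilon>0$, choosing $\delta$ small makes this $<\varepsilon$, and $\Pi(B\cap\{\theta:K_i(\theta_0,\theta)<\varepsilon\ \forall i\})\ge\Pi(B_\delta)>0$, which is (ii).

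The main obstacle is the (T1) estimate on the unbounded time axis. The device $\hat\eta_j=h_d\eta_j$ only buys uniform control of the $\hat\eta_j$'s, and the factor $h_d^{-1}\sim t/(d+1)$ inflates that into an error in the sigmoid argument growing linearly in $t$; the work is to see that once this error is passed through $\sigma$ and integrated it is still controlled by $\int_0^t\min\{1,c\delta s\}\,ds\le t$, so that the uniform-integrability assumption (A3') on $T^2$ (rather than a higher moment) is exactly enough for $\sup_iV_i<\infty$. The interplay between the shape of $h_d$ (equivalently $\log(e^t-1)$), the Lipschitz constants of $\sigma$ and $\log\sigma$, and the moment assumption is the real content; everything else is either a direct appeal to Lemmas~\ref{Lemma: separable}--\ref{Lemma: bounded supremum} and assumptions (A1), (A2), (A4), or the standard VC/McDiarmid machinery of step (T2).
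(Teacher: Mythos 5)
Your proposal is correct in outline, and its testing half is the same as the paper's: the paper's Lemma \ref{Lemma: Testing Conditions without censoring Qn} uses exactly your combination of the bounded-difference (McDiarmid) inequality, the VC bound $S_{\mathcal A}(n)\leq (n+1)^{2(d+1)}$ for rectangles, and the triangle-inequality argument at a near-maximizing rectangle, with the same caveat (asserted, not reproved) that the symmetrization/expectation bound extends to independent non-identically distributed data with fixed covariates. Where you genuinely diverge is in (T1). The paper works with the set $B_{\delta,\tau}$ of \eqref{eqn: setB}, which imposes a tight uniform bound $\sup_{[0,\tau]}|\eta_j-\eta_{j,0}|\leq\delta/(1+\tau)$ on a compact window together with only the one-sided tail condition $\inf_{t>\tau}\eta_j(t)h_d(t)>-1$; the Kullback--Leibler and variance bounds (Lemmas \ref{Lemma: KL1}--\ref{Lemma: KL2}) are then small on $[0,\tau]$ by the Lipschitz estimate of Lemma \ref{Lemma: Bound KL with the sup in [0,tau]} and small on the tail by taking $\tau$ large and invoking (A3'), while the prior positivity of $B_{\delta,\tau}$ costs the bulk of the appendix: the Gaussian correlation inequality (Theorem \ref{GaussianCorrelation}, Lemma \ref{Lemma: Gaussian correlation inequality for GP}), the dyadic supremum bound (Lemma \ref{Lemma:sup ineq 1}), the tail estimate of Lemma \ref{Lemma: Sup tail greater than M}, the small-ball estimate of Lemma \ref{Lemma: Positivity [0,tau]}, the Kuelbs--Li--Linde shift inequality, and the $\tau_S,\tau_C,\tau_P,\tau(\epsilon)$ bookkeeping. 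You instead take a global sup-norm ball $\|\hat\eta_j-\hat\eta_{j,0}\|_\infty<\delta$ in the transformed scale, whose positivity is immediate from (A4) plus the definition of topological support of the Gaussian law of $\hat\eta_j$ on $\mathbb B$ (Lemmas \ref{Lemma: separable}--\ref{Lemma: bounded supremum}, where (A1) enters), and you absorb the price --- a perturbation of the linear predictor growing like $\delta\log(e^t-1)\leq\delta t$ --- into the KL and variance estimates via the Lipschitz constants of $\sigma$ and $\log\sigma$, the clipping $\int_0^t\min\{1,c\delta(1+s)\}ds\leq t$ (so that $V_i$ needs only second, not fourth, moments), the identity $\E_{\theta_0}(\Lambda_{x,0}(T)^k\mid x)\leq k!$, and the uniform second-moment bound that (A3') supplies. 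The trade is real: the paper's two-regime set is larger and its tail analysis is reusable for the random-design theorem, but its positivity proof is heavy; your set is smaller yet still of positive mass, your positivity argument is a one-liner, and your moment usage of (A3') (uniform boundedness rather than uniform smallness of tails) is if anything slightly weaker. Both arguments deliver (T1) and (T2) of Theorem \ref{TheoChoi} and hence the stated conclusion, so I see no gap, only a different and in places leaner route.
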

\subsection*{Overview of proofs}
We give an overview of the proofs and lemmas used to prove Theorem  \ref{THMain 1} and Theorem \ref{THMain 2}. As many of these results will be used by both proofs, we classify them into two general subsections related to conditions (T1) and (T2) of Theorem \ref{TheoChoi}: Existence of tests and Prior Positivity of neighbourhoods.

It is worth noticing that for the random design approach (\textbf{[RD]}), the pairs $((T_i,X_i))_{i\geq 1}$ are i.i.d. samples from the joint distribution of time and covariates and hence Schwartz's Theorem is enough for proving Theorem \ref{THMain 1}. In particular, for this design, we exchange condition (T1) of Theorem \ref{TheoChoi}  by the alternative condition (G1). For the non-random design (\textbf{[NRD]}), we need the version given in Theorem \ref{TheoChoi}.   
\subsubsection*{Existence of Tests}
Given the metrics in equations \eqref{eqn: loss function} and \eqref{abuse notation}, a natural choice of a test is to consider the largest deviation of the joint empirical distribution $\mu_n$ with respect the true joint distribution $\Prob_{\theta_0}$ over the class of rectangles $\mathcal{A}$ on $\mathbb{R}^{+}\times [0,1]^d$. More formally, for $A=A_1\times A_2\in\mathcal{A}$, we define the empirical distribution as
\begin{eqnarray}
\mu_n(A)&=&\frac{1}{n}\sum_{i=1}^n\delta_A(T_i,X_i),
\end{eqnarray}
and then, we define the sequence of tests 
\begin{eqnarray}
\phi_n((T_1,X_1),\ldots,(T_n,X_n))&\coloneqq&\mathbf{1}\left\{\underset{A\in\mathcal{A}}{\sup}\left|\mu_n(A)-\mu_{\theta_0}(A)\right|>\frac{\epsilon}{4}
\right\},\end{eqnarray}
for the random design case associated with the loss function given in equation \eqref{eqn: loss function}, and 
\begin{eqnarray}
\tilde{\phi}_n((T_1,X_1),\ldots,(T_n,X_n))&\coloneqq &\mathbf{1}\left\{\underset{A\in\mathcal{A}}{\sup}\left|\mu_n(A)-\int_{A_2}\tilde{\mu}_{\theta_0}^x(A_1)Q_n(dx)\right|>\frac{\epsilon}{4}
\right\},\end{eqnarray}
for the fixed design case associated with the loss function given in equation \eqref{abuse notation}.

We use results from the Vapnik-Chervonenkis (VC) theory for providing exponentially small bounds for the sequence of probabilities $\E_{\theta_0}(\phi_n)$ and $\tilde{\E}_{\theta_0}(\tilde{\phi}_n)$. These bounds will require the finiteness of the VC dimension $V$, of the class of sets $\mathcal{A}$ considered. We recall the definition of the VC dimension $V$ as the largest integer $n$ such that 
\begin{eqnarray}
S_{\mathcal{A}}(n)&=&2^n,\label{VC}
\end{eqnarray}
where $S_{\mathcal{A}}(n)=\underset{x_1,\ldots,x_n\in\mathbb{R}^d}{\max}|\{\{x_1,\ldots,x_n\}\cap A;A\in\mathcal{A}\}|$, i.e. the maximal number of different subsets of n points which can be separated by elements of $\mathcal{A}$. A direct consequence of this requirement, is that we can modify our result and include a larger class of sets $\mathcal{A}$, as long as this new class has a finite VC dimension. For our particular choice, rectangles in $\mathbb{R}^{+}\times [0,1]^d$, $V$ is bounded by $2(d+1)$, \cite[Lemma 4.1]{devroye2012combinatorial}. On the other hand, if we consider the class of all measurable sets on $\mathbb{R}^{+}\times [0,1]^d$, the VC dimension is then infinity and thus the result we use can not be applied.

\begin{lemma} \label{Lemma: Testing Conditions without censoring}
Let $\epsilon>0$ and $\Prob_{\theta_0}$ be the true joint probability measure defined by the true parameter $\theta_0$, then
\begin{eqnarray}
\E_{\theta_0}(\phi_n)\leq Ce^{-\frac{\epsilon^2}{2}n}&,&\underset{\theta\in\Theta,d(\theta_0,\theta)>\epsilon}{\sup}\E_{\theta}(1-\phi_n)\leq Ce^{-\frac{\epsilon^2}{2}n},\nonumber
\end{eqnarray}
for some constant $C$.
\end{lemma}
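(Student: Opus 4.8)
The plan is to bound both error probabilities by a single application of a VC-type uniform deviation inequality (for instance the Vapnik--Chervonenkis / Devroye--Lugosi bound), exploiting that the class $\mathcal{A}$ of rectangles in $\mathbb{R}^{+}\times[0,1]^d$ has finite VC dimension $V \le 2(d+1)$. For the Type I error, I would observe that $\E_{\theta_0}(\phi_n) = \Prob_{\theta_0}\!\left(\sup_{A\in\mathcal{A}}|\mu_n(A)-\mu_{\theta_0}(A)|>\epsilon/4\right)$, which is exactly a statement about the uniform deviation of the empirical measure (built from the i.i.d.\ pairs $(T_i,X_i)\sim\mu_{\theta_0}$) from its mean over the VC class $\mathcal{A}$. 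By the standard bound, this probability is at most a polynomial-in-$n$ factor times $\exp(-c n (\epsilon/4)^2)$ for a universal constant $c$ (the cleanest packaging gives $8 S_{\mathcal{A}}(n) e^{-n\epsilon^2/128}$ or similar); absorbing the polynomial factor and adjusting constants yields a bound of the form $C e^{-(\epsilon^2/2)n}$ after possibly shrinking the exponent's constant. (If one wants the stated exponent literally, one uses that $S_{\mathcal{A}}(n)\le (n+1)^{2(d+1)}$ grows subexponentially, so for $n$ large the polynomial is dominated and for small $n$ the constant $C$ can be enlarged.)

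For the Type II error, the key step is to relate the event $\{\Phi_n = 0\}$ under an alternative $\theta$ with $d(\theta_0,\theta)>\epsilon$ to a uniform-deviation event under $\Prob_\theta$. On the event $1-\phi_n = 1$ we have $\sup_{A}|\mu_n(A)-\mu_{\theta_0}(A)|\le \epsilon/4$. By the triangle inequality, for the rectangle $A^\ast$ (nearly) attaining $d(\theta,\theta_0) = \sup_A|\mu_\theta(A)-\mu_{\theta_0}(A)|$,
\begin{eqnarray}
\sup_{A\in\mathcal{A}}|\mu_n(A)-\mu_{\theta}(A)| &\ge& |\mu_n(A^\ast)-\mu_\theta(A^\ast)| \ge |\mu_\theta(A^\ast)-\mu_{\theta_0}(A^\ast)| - |\mu_n(A^\ast)-\mu_{\theta_0}(A^\ast)| \nonumber\\
&>& \epsilon - \frac{\epsilon}{4} = \frac{3\epsilon}{4} \ge \frac{\epsilon}{4}. \nonumber
\end{eqnarray}
Hence $\{1-\phi_n=1\}\subseteq\{\sup_A|\mu_n(A)-\mu_\theta(A)|>\epsilon/4\}$, and now under $\Prob_\theta$ the pairs $(T_i,X_i)$ are i.i.d.\ with common law $\mu_\theta$, so the same VC inequality applies verbatim with $\mu_{\theta_0}$ replaced by $\mu_\theta$, giving $\E_\theta(1-\phi_n)\le C e^{-(\epsilon^2/2)n}$. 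Crucially, the VC bound is distribution-free, so the same constant $C$ works uniformly over all $\theta\in\Theta$ with $d(\theta_0,\theta)>\epsilon$, which is exactly what the supremum in the statement requires.

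The main obstacle is bookkeeping rather than anything deep: first, choosing the version of the VC inequality and the splitting of $\epsilon/4$ so that the final exponent is at least $\epsilon^2/2$ (one may need to track whether the paper's intended constant is $\epsilon^2/2$ with $\epsilon$ the full radius or with $\epsilon/4$, and adjust $C$ accordingly); and second, verifying measurability of the suprema $\sup_{A\in\mathcal{A}}|\mu_n(A)-\mu_\theta(A)|$ so that the probabilities are well-defined — this follows because $\mathcal{A}$, while uncountable, is a VC class of rectangles whose supremum is attained along a countable dense subcollection (rectangles with rational endpoints together with limiting values at the data points), a routine separability-of-the-empirical-process argument. A minor additional point is confirming that the alternative $A^\ast$ achieving (or approaching) the supremum in $d(\theta,\theta_0)$ indeed lies in $\mathcal{A}$; since $d$ was defined precisely as a supremum over $\mathcal{A}$, this is immediate, and one uses an arbitrarily small slack to handle the case where the supremum is not attained.
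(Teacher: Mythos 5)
Your proposal is correct and follows essentially the same route as the paper: a distribution-free VC uniform-deviation bound over the class of rectangles (VC dimension at most $2(d+1)$, shatter coefficient at most $(n+1)^{2(d+1)}$) for the type I error, and for the type II error the triangle inequality at a (near-)maximizing rectangle $A^\star$ combined with the same distribution-free bound applied under the alternative $\theta$. The only cosmetic differences are that the paper packages the uniform bound as the bounded-difference inequality plus a bound on the expected supremum rather than the one-shot VC inequality, and phrases the type II step via a total-probability decomposition instead of your direct event inclusion; the constant-in-the-exponent bookkeeping you flag is likewise present (and equally harmless) in the paper's version.
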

\begin{lemma} \label{Lemma: Testing Conditions without censoring Qn}
Let $\epsilon>0$ and $\tilde{\Prob}_{\theta_0}$ be the true joint probability measure defined by the true parameter $\theta_0$, then
\begin{eqnarray}
\tilde{\E}_{\theta_0}(\tilde{\phi}_n)\leq \tilde{C}e^{-\frac{\epsilon^2}{2}n}&,&\underset{\theta\in\Theta,d_n(\theta_0,\theta)>\epsilon}{\sup}\tilde{\E}_{\theta}(1-\tilde{\phi}_n)\leq\tilde{C}e^{-\frac{\epsilon^2}{2}n},\nonumber
\end{eqnarray}
for some constant $\tilde{C}$.
\end{lemma}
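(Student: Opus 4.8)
The plan is to read $\tilde{\phi}_n$ as a test built from the uniform deviation of an empirical measure from its mean over the Vapnik--Chervonenkis class of rectangles, and then to apply a VC concentration inequality. For $A=A_1\times A_2\in\mathcal{A}$ write $\bar{\mu}_{n}(A)=\int_{A_2}\tilde{\mu}_{\theta_0}^{x}(A_1)\,Q_n(dx)=\tfrac1n\sum_{i=1}^n\mathbf{1}_{A_2}(x_i)\,\tilde{\mu}_{\theta_0}^{x_i}(A_1)$. The first step is the observation that under $\tilde{\Prob}_{\theta_0}$ the $T_i$ are independent with $T_i\sim\tilde{\mu}_{\theta_0}^{x_i}$, so $\tilde{\E}_{\theta_0}[\mathbf{1}_A(T_i,x_i)]=\mathbf{1}_{A_2}(x_i)\tilde{\mu}_{\theta_0}^{x_i}(A_1)$ and hence $\bar{\mu}_{n}(A)=\tilde{\E}_{\theta_0}[\mu_n(A)]$ for every $A\in\mathcal{A}$; consequently $\tilde{\phi}_n=1$ precisely when $\sup_{A\in\mathcal{A}}|\mu_n(A)-\tilde{\E}_{\theta_0}[\mu_n(A)]|>\epsilon/4$. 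More generally, for any $\theta\in\Theta$ one has $\tilde{\E}_{\theta}[\mu_n(A)]=\int_{A_2}\tilde{\mu}_{\theta}^{x}(A_1)Q_n(dx)$. None of the assumptions (A1)--(A4) enters here: this is a purely combinatorial/concentration statement, exactly as in Lemma~\ref{Lemma: Testing Conditions without censoring}.

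Thus both halves of the lemma reduce to showing that for every $\theta\in\Theta$,
\[
\tilde{\Prob}_{\theta}\!\left(\sup_{A\in\mathcal{A}}\bigl|\mu_n(A)-\tilde{\E}_{\theta}[\mu_n(A)]\bigr|>\tfrac{\epsilon}{4}\right)\le\tilde{C}\,e^{-c\,\epsilon^2 n},
\]
with $\tilde{C},c>0$ independent of $n$ and of $\theta$. To get this I would use that $\mathcal{A}$ has VC dimension $V\le 2(d+1)$ (\cite[Lemma 4.1]{devroye2012combinatorial}), hence shatter coefficients $S_{\mathcal{A}}(m)\le(m+1)^{2(d+1)}$ by Sauer's lemma, these combinatorial quantities being distribution-free. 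The main obstacle is that the observations $(T_i,x_i)$ are independent but \emph{not} identically distributed, so the textbook i.i.d.\ VC inequality does not apply verbatim; its proof carries over, however, because (i) symmetrisation against an independent ghost sample $(T_i')$ does not use identical distributions, and (ii) conditionally on the data the Rademacher complexity of $\{\mathbf{1}_A:A\in\mathcal{A}\}$ is controlled through $S_{\mathcal{A}}(2n)$, giving the distribution-free estimate $\tilde{\E}_\theta\sup_{A}|\mu_n(A)-\tilde{\E}_\theta[\mu_n(A)]|\le c'\sqrt{(d+1)\log(2n)/n}\to 0$. Since $(T_1,\dots,T_n)\mapsto\sup_{A}|\mu_n(A)-\tilde{\E}_\theta[\mu_n(A)]|$ has bounded differences ($1/n$ per coordinate), McDiarmid's inequality then yields the displayed exponential bound once $n$ exceeds a threshold depending only on $\epsilon$ and $d$; the finitely many smaller $n$ and the polynomial shatter factor are absorbed into $\tilde{C}$, producing a bound of the asserted form (the exact constant in the exponent is immaterial downstream, as condition (T2) of Theorem~\ref{TheoChoi} only needs exponential decay). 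Measurability of the supremum is not an issue: it may be restricted to rectangles with rational endpoints.

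With this in hand, Type~I is the displayed inequality at $\theta=\theta_0$, i.e.\ $\tilde{\E}_{\theta_0}(\tilde{\phi}_n)\le\tilde{C}e^{-c\epsilon^2 n}$. For Type~II, fix $\theta$ with $d_n(\theta_0,\theta)>\epsilon$; by \eqref{abuse notation} there is $A^{\ast}=A_1^{\ast}\times A_2^{\ast}\in\mathcal{A}$ with $\bigl|\tilde{\E}_{\theta}[\mu_n(A^{\ast})]-\tilde{\E}_{\theta_0}[\mu_n(A^{\ast})]\bigr|>\epsilon$. On the event $\{\tilde{\phi}_n=0\}=\{\sup_{A}|\mu_n(A)-\tilde{\E}_{\theta_0}[\mu_n(A)]|\le\epsilon/4\}$ we have $|\mu_n(A^{\ast})-\tilde{\E}_{\theta_0}[\mu_n(A^{\ast})]|\le\epsilon/4$, so by the triangle inequality $|\mu_n(A^{\ast})-\tilde{\E}_{\theta}[\mu_n(A^{\ast})]|>\epsilon-\epsilon/4>\epsilon/4$; hence
\[
\tilde{\E}_{\theta}(1-\tilde{\phi}_n)\le\tilde{\Prob}_{\theta}\!\left(\bigl|\mu_n(A^{\ast})-\tilde{\E}_{\theta}[\mu_n(A^{\ast})]\bigr|>\tfrac{\epsilon}{4}\right)\le\tilde{\Prob}_{\theta}\!\left(\sup_{A\in\mathcal{A}}\bigl|\mu_n(A)-\tilde{\E}_{\theta}[\mu_n(A)]\bigr|>\tfrac{\epsilon}{4}\right),
\]
and the right-hand side is at most $\tilde{C}e^{-c\epsilon^2 n}$ uniformly in such $\theta$ by the previous paragraph, since the constants there do not depend on the underlying law. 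Taking the supremum over $\{\theta:d_n(\theta_0,\theta)>\epsilon\}$ completes the proof; the same argument with i.i.d.\ pairs $(T_i,X_i)$ and $\mu_{\theta_0}$ in place of $\bar{\mu}_n$ gives Lemma~\ref{Lemma: Testing Conditions without censoring}.
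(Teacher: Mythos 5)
Your proposal is correct and follows essentially the same route as the paper: McDiarmid's bounded-difference inequality for the supremum (noting the $1/n$ increments hold for independent, non-identically distributed $T_i$ with fixed covariates), the distribution-free VC/shatter-coefficient bound $S_{\mathcal{A}}(n)\leq(n+1)^{2(d+1)}$ to make the expectation vanish, and a witness rectangle $A^\star$ plus the triangle inequality and concentration under $\theta$ for the type~II error. The only cosmetic differences are that you re-derive the expectation bound via symmetrisation/Rademacher complexity where the paper invokes (and replicates) Theorem~3.1 of Devroye--Lugosi, and your type~II step uses a direct event inclusion rather than the paper's total-probability decomposition; your remark that the exact exponent constant is immaterial is also a fair (indeed more careful) reading of what the argument actually delivers.
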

\subsubsection*{Prior Positivity of neighbourhoods}
We proceed to prove the conditions related to Kullback-Leibler neighbourhoods of the true parameter $\theta_0$. In order to do this, we define a set $B_{\delta,\tau}$, in which conditions (G1) and (T1) will be satisfied for both settings. We finish our result by proving the positivity of such set. 
\begin{definition}
Let $\delta>0$ and $\tau>1$, then
\begin{eqnarray}
B_{\delta,\tau}&\coloneqq&\left\{\theta\in\Theta:\left|\frac{\Omega}{\Omega_0}-1\right|<\delta;\underset{t\in[0,\tau]}{\sup}|\eta_j(t)-\eta_{j0}(t)|\leq\frac{\delta}{1+\tau}, \underset{t>\tau}{\inf}\{\eta_j(t)h_d(t)\}>-1,\forall j\right\}.\label{eqn: setB}
\end{eqnarray}
\end{definition} 
Then, we show that for a given $\epsilon>0$, we can always find some $\delta(\epsilon)>0$ and $\tau(\epsilon)>1$ such that $KL(\theta_0,\theta)<\epsilon$ for all $\theta\in B_{\delta(\epsilon),\tau}$ for all $\tau\geq \tau(\epsilon)$, in the case of condition (G1). The intuition behind this proof is that since the KL is finite (and defined as an integral), from some point onwards, we do not care about the tail of the Gaussian process. We quantify the notion of  ``not caring'' by multiplying the Gaussian process by the decreasing function $h_d$ from some time $\tau(\epsilon)$.  The same applies for condition (T1). We formalize this result in the two following lemmas.

\begin{lemma}\label{Lemma: KL1}
Assume  the random design \textbf{[RD]} for the covariates and assumption (A3). Let $0<\epsilon<1$, then there exits $0<\delta(\epsilon)<\frac{1}{2}$ small enough and $\tau(\epsilon)>1$ large enough such that for all $\tau\geq\tau(\epsilon)$ and for all $\theta\in B_{\delta(\epsilon),\tau}$,
\begin{eqnarray}
\E_{\theta_0}\left(\log\frac{f_X(T,\theta_0)}{f_X(T,\theta)}\right)<\epsilon.\nonumber
\end{eqnarray}
\end{lemma}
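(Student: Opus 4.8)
The plan is to bound the Kullback–Leibler divergence $\E_{\theta_0}\!\left(\log\frac{f_X(T,\theta_0)}{f_X(T,\theta)}\right)$ by first conditioning on the covariate $X=x$ and then integrating over $Q(dx)$. Since conditioned on $X=x$ the failure time has density $f_x(t,\theta) = \lambda_x(t,\theta)e^{-\Lambda_x(t,\theta)}$, the log-ratio decomposes as
\begin{eqnarray}
\log\frac{f_x(T,\theta_0)}{f_x(T,\theta)} = \log\frac{\lambda_x(T,\theta_0)}{\lambda_x(T,\theta)} + \left(\Lambda_x(T,\theta) - \Lambda_x(T,\theta_0)\right).\nonumber
\end{eqnarray}
Taking $\E_{\theta_0}(\cdot\mid X=x)$, the first term is controlled pointwise in $t$ by how close $\lambda_x(t,\theta)$ is to $\lambda_x(t,\theta_0)$, and the second term, after using $\E_{\theta_0}(\mathbf{1}\{T>s\}\mid X=x) = S_x(s,\theta_0)$ and Fubini, becomes $\int_0^\infty (\lambda_x(s,\theta)-\lambda_x(s,\theta_0))S_x(s,\theta_0)\,ds$. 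So the whole quantity reduces to controlling $|\lambda_x(s,\theta)-\lambda_x(s,\theta_0)|$ (and the ratio of hazards) uniformly in $x$, weighted by $S_x(s,\theta_0)$ which is integrable in $s$ thanks to assumption (A3) — this is precisely where the finite-first-moment hypothesis enters, since $\int_0^\infty S_x(s,\theta_0)\,ds = \E_{\theta_0}(T\mid X=x)$.

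Next I would split the time axis at $\tau$. For $t\in[0,\tau]$: on $B_{\delta,\tau}$ we have $\sup_{t\le\tau}|\eta_j(t)-\eta_{j0}(t)|\le \delta/(1+\tau)$ and $|\Omega/\Omega_0-1|<\delta$, so since $\sigma$ is $1$-Lipschitz (in fact $|\sigma'|\le 1/4$) and bounded in $[0,1]$, one gets $|\lambda_x(t,\theta)-\lambda_x(t,\theta_0)|\le C\delta$ uniformly in $x\in[0,1]^d$ and $t\le\tau$ (the $x_j\in[0,1]$ sum over $j=1,\dots,d$ costs at most a factor $d$, absorbed by the $1+\tau$ denominator design — that is exactly why the bound in $B_{\delta,\tau}$ is $\delta/(1+\tau)$ rather than $\delta$). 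Likewise $\lambda_x(t,\theta_0)$ is bounded below on $[0,\tau]$ by a positive constant (it equals $\Omega_0\sigma(\cdot)$ and $\sigma>0$), so the log-ratio term is also $O(\delta)$ there. Hence the contribution of $t\le\tau$ to the KL is at most $C\delta\int_0^\tau S_x(s,\theta_0)\,ds \le C\delta M$, which is $<\epsilon/2$ once $\delta=\delta(\epsilon)$ is small.

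For the tail $t>\tau$: here I cannot control $\eta_j$ uniformly, only the one-sided constraint $\eta_j(t)h_d(t)>-1$ on $B_{\delta,\tau}$, i.e. $\eta_j(t) > -1/h_d(t) = -(t+\log(1-e^{-t}))/(d+1)$ for $t>1$. This gives a lower bound on the argument of $\sigma$ inside $\lambda_x$, hence a lower bound on $\lambda_x(t,\theta)$, but the real point is to show the tail contribution to the KL is small. The log-ratio piece $\log(\lambda_x(t,\theta_0)/\lambda_x(t,\theta))$ integrated against $S_x(s,\theta_0)\,ds$ over $(\tau,\infty)$ is bounded because $S_x(s,\theta_0)$ has an integrable tail (by (A3), $\int_\tau^\infty S_x(s,\theta_0)ds\to 0$ as $\tau\to\infty$, uniformly if one is careful — or at least $\int_\tau^\infty S_x ds\le$ something going to 0) and because $|\log\lambda_x(t,\theta)|$ grows at most linearly in $t$ (since $\sigma(y)\ge c e^{-|y|}$ and $|y|$ is at most linear in $t$ on $B_{\delta,\tau}$ via the $h_d$ bound), so the integrand is dominated by (linear in $t$) $\times S_x(t,\theta_0)$ which is integrable; choosing $\tau(\epsilon)$ large makes this $<\epsilon/2$. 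The term $\int_\tau^\infty(\lambda_x(s,\theta)-\lambda_x(s,\theta_0))S_x(s,\theta_0)ds$ is handled similarly, using $0\le\lambda_x\le\Omega\le(1+\delta)\Omega_0$ so the integrand is bounded by a constant times $S_x(s,\theta_0)$, whose tail integral vanishes. Adding the two halves gives the claim.

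\textbf{Main obstacle.} The delicate point is the tail estimate: making the bounds uniform over all $x\in[0,1]^d$ and over all $\theta\in B_{\delta,\tau}$ simultaneously, while the only tail information is the crude one-sided inequality $\eta_j(t)h_d(t)>-1$. One has to check that the at-most-linear-in-$t$ growth of $|\log\lambda_x(t,\theta)|$ (coming from $-\log\sigma(y)\le |y|+\log 2$ and the $h_d$-driven linear bound on $y$) is still beaten by the tail decay of $S_x(t,\theta_0)$ guaranteed by the finite first moment (A3); in particular one may need that $\int_\tau^\infty t\,S_x(t,\theta_0)\,dt$ is finite and small for large $\tau$, which is slightly stronger than (A3) literally states, so part of the work is arguing this follows (e.g. it is implied by integrability of $T$ together with the specific exponential-type decay of $S_x$ coming from $\lambda_x\ge\Omega_0\sigma(-\text{linear})$, or one simply strengthens the reading of (A3)). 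Getting the $x$-uniformity right — i.e. checking every constant $C$, $M$ is independent of $x$ — is the bookkeeping that makes this lemma nontrivial rather than routine.
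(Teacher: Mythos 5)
Your overall route is essentially the paper's: condition on $X=x$ and integrate over $Q$, split the time axis at $\tau$, use the uniform closeness of the $\eta_j$ on $[0,\tau]$ together with Lipschitz bounds on $\sigma$, use the one-sided constraint $\eta_j(t)h_d(t)>-1$ to get $0\geq\log\sigma(Y_x(t))\geq -t$ on the tail, and let (A3) kill the tail contribution by taking $\tau$ large. Two points, however, need repair or clarification.

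First, your treatment of the log-hazard-ratio on $[0,\tau]$ has a step that fails as stated. You bound $\bigl|\log\lambda_x(t,\theta_0)-\log\lambda_x(t,\theta)\bigr|$ by combining $|\lambda_x(t,\theta)-\lambda_x(t,\theta_0)|\leq C\delta$ with a positive lower bound on $\lambda_x(t,\theta_0)$ over $[0,\tau]$. That lower bound is \emph{not} uniform in $\tau$: under (A4) the true paths satisfy $\eta_{j,0}=\hat\eta_{j,0}/h_d$ with $\hat\eta_{j,0}$ bounded, so $Y_{x0}(t)$ may drift linearly downward and $\sigma(Y_{x0}(t))$ may be as small as $e^{-ct}$, making your constant grow exponentially in $\tau$. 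Since the lemma requires a single $\delta(\epsilon)$ working for \emph{all} $\tau\geq\tau(\epsilon)$, an $O(\delta)$ bound with a $\tau$-dependent constant of this kind does not close the argument. The correct (and the paper's) route is to note that $\log\sigma$ itself is globally $1$-Lipschitz (its derivative is $1-\sigma\in(0,1)$), so directly $\bigl|\log\sigma(Y_x(t))-\log\sigma(Y_{x0}(t))\bigr|\leq |Y_x(t)-Y_{x0}(t)|\leq (d+1)\delta/(1+\tau)$ on $[0,\tau]$, plus $\log(\Omega_0/\Omega)\leq\delta/(1-\delta)$; no lower bound on the hazard is needed.

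Second, the ``main obstacle'' you raise dissolves: nothing stronger than (A3) is needed, and no uniformity in $x$ is required in the random-design case. In your decomposition the linear-in-$t$ term ($-\log\sigma(Y_x(t))\leq t$) is integrated against $f_x(t,\theta_0)\,dt$, yielding $\E_{\theta_0}(T\mathbf{1}\{T>\tau\}\mid X=x)$, while the tail of the cumulative-hazard difference has a bounded integrand against $S_x(\cdot,\theta_0)$, yielding at most a constant times the same quantity. One then integrates over $Q$ and uses (A3) with monotone/dominated convergence to conclude $\E_Q\bigl(\E_{\theta_0}(T\mathbf{1}\{T>\tau\}\mid X)\bigr)\to 0$ as $\tau\to\infty$, uniformly over $\theta\in B_{\delta,\tau}$ since the constants depend only on $\Omega_0$, $\delta$ and $d$; this is exactly the paper's claim \eqref{prove}. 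Pointwise-in-$x$ uniformity is only needed in the fixed-design Lemma \ref{Lemma: KL2}, which is why (A3') is formulated there. With these two repairs your proof matches the paper's.
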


\begin{lemma}\label{Lemma: KL2}
Assume the fixed design \textbf{[NRD]} for the covariates and assumption (A3'). Let $0<\epsilon<1$, then there exits $0<\delta(\epsilon)<\frac{1}{2}$ small enough and $\tau(\epsilon)>1$ large enough such that for all $\tau\geq\tau(\epsilon)$ and for all $\theta\in B_{\delta(\epsilon),\tau}$,
\begin{itemize}
\item[(i)] $K_i(\theta_0,\theta)<\epsilon$ for all $i$
\item[(ii)]$\sum_{i=1}^\infty\frac{V_i(\theta_0,\theta)}{i^2}<\infty$.
\end{itemize}
\end{lemma}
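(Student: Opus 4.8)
The plan is to adapt the argument behind Lemma~\ref{Lemma: KL1}, but to obtain bounds that are \emph{uniform in the covariate} $x_i$ (hence uniform in $i$) --- which is precisely what the uniform‑integrability assumption (A3') buys over the merely integrated assumption (A3) --- and then to add a variance estimate on top. For a fixed covariate $x$ and $\theta=(\Omega,\eta_0,\dots,\eta_d)$ I would write
\[
\Upsilon=\log\frac{f_x(T,\theta_0)}{f_x(T,\theta)}=\log\frac{\Omega_0}{\Omega}+\log\frac{\sigma(g_0(T))}{\sigma(g(T))}+\big(\Lambda_x(T,\theta)-\Lambda_x(T,\theta_0)\big),
\]
where $g(t)=\eta_0(t)+\sum_{j=1}^d x_j\eta_j(t)$ and $g_0$ is built from $\theta_0$ in the same way. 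Using that $y\mapsto\log(1+e^{-y})$ is $1$-Lipschitz and $\sigma$ is $\tfrac14$-Lipschitz, together with the constraints defining $B_{\delta,\tau}$, I would bound the three pieces separately on $\{T\le\tau\}$ and $\{T>\tau\}$. On $\{T\le\tau\}$ everything is of order $\delta$: $|\log(\Omega_0/\Omega)|\le 2\delta$, $|g_0(T)-g(T)|\le(d+1)\delta/(1+\tau)$, and $|\Lambda_x(T,\theta)-\Lambda_x(T,\theta_0)|\le\Omega_0\delta\,T+\tfrac{\Omega_0(d+1)\delta}{4}$. On $\{T>\tau\}$ the crucial observation is that the constraint $\inf_{t>\tau}\eta_j(t)h_d(t)>-1$ of $B_{\delta,\tau}$ forces $g(t)>-(d+1)/h_d(t)=-(t+\log(1-e^{-t}))$, hence $\sigma(g(t))>e^{-t}$; combined with $\sigma<1$ this yields the enclosure $e^{-t}<\sigma(g(t))<1$, so $|\log\sigma(g(t))|<t$. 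By (A4) and Lemma~\ref{Lemma: bounded supremum}, $\hat\eta_{j,0}=\eta_{j,0}h_d\in\mathcal{C}_0$ and hence tends to $0$; thus the same enclosure holds for $g_0$ once $\tau$ exceeds some $\tau_0$ depending only on $\theta_0$. Consequently $\big|\log\tfrac{\sigma(g_0(T))}{\sigma(g(T))}\big|\le T$ on $\{T>\tau\}$, while crudely $|\Lambda_x(T,\theta)-\Lambda_x(T,\theta_0)|\le 3\Omega_0 T$ there (using $\Lambda_x(t,\theta)\le\Omega t$ with $\Omega<2\Omega_0$, and likewise for $\theta_0$). Collecting, I obtain a pointwise bound of the form
\[
|\Upsilon|\;\le\; a_d\,\delta\;+\;b\,\delta\,T\;+\;c\,T\,\mathbf{1}\{T>\tau\},
\]
for finite constants $a_d,b,c$ depending only on $d$ and $\Omega_0$.

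Part (i) then follows by integrating against $f_{x_i}(\cdot,\theta_0)$: $0\le K_i(\theta_0,\theta)=\E_{\theta_0}(\Upsilon\mid x_i)\le\E_{\theta_0}(|\Upsilon|\mid x_i)\le a_d\delta+b\delta M_1+c\,\E_{\theta_0}(T\mathbf{1}\{T>\tau\}\mid x_i)$, where $M_1:=\sup_x\E_{\theta_0}(T\mid x)$. Finiteness of $M_1$ and of $M_2:=\sup_x\E_{\theta_0}(T^2\mid x)$ follows from (A3') applied once with $\delta=1$, which gives some $M$ with $\sup_x\E_{\theta_0}(T^2\mathbf{1}\{T>M\}\mid x)<1$, hence $\E_{\theta_0}(T^2\mid x)\le M^2+1$. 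Since $\E_{\theta_0}(T\mathbf{1}\{T>\tau\}\mid x)\le\E_{\theta_0}(T^2\mathbf{1}\{T>\tau\}\mid x)$ for $\tau\ge 1$, (A3') makes the last term uniformly small in $x$ for large $\tau$. So I would first choose $\delta(\epsilon)<\tfrac12$ small enough that $a_d\delta+b\delta M_1<\tfrac{2\epsilon}{3}$, then invoke (A3') with $\delta(\epsilon)$ itself to obtain $M'$, and set $\tau(\epsilon)=\max(M',1,\tau_0)$; for all $\tau\ge\tau(\epsilon)$ and $\theta\in B_{\delta(\epsilon),\tau}$ this yields $\sup_i K_i(\theta_0,\theta)<\epsilon$.

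Part (ii) follows from the same bound via $(\alpha+\beta+\gamma)^2\le 3(\alpha^2+\beta^2+\gamma^2)$:
\[
V_i(\theta_0,\theta)\le\E_{\theta_0}(\Upsilon^2\mid x_i)\le 3a_d^2\delta^2+3b^2\delta^2 M_2+3c^2\,\E_{\theta_0}(T^2\mathbf{1}\{T>\tau\}\mid x_i)\le 3a_d^2\delta^2+3b^2\delta^2 M_2+3c^2\delta(\epsilon),
\]
the last step by (A3') once $\tau\ge M'$. Hence $V_i(\theta_0,\theta)\le\bar V$ for a finite constant $\bar V$ not depending on $i$ or on $\theta\in B_{\delta(\epsilon),\tau}$, whence $\sum_{i\ge 1}V_i(\theta_0,\theta)/i^2\le\bar V\sum_{i\ge 1}i^{-2}<\infty$.

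I expect the tail regime $\{T>\tau\}$ to be the main obstacle: there $B_{\delta,\tau}$ gives no control on $|g_0(t)-g(t)|$, only the one‑sided lower bound coming from the $h_d$‑weighted infimum, so one is forced into the crude but uniform estimates $\sigma(g(t))\in(e^{-t},1)$ and $\Lambda_x(t,\theta)\le\Omega t$, and must then rely entirely on the uniform second‑moment tail control of (A3') (and its consequence $\sup_x\E_{\theta_0}(T^2\mid x)<\infty$) to kill the resulting contributions that are linear in $T$, uniformly over the covariate. A smaller but necessary point is verifying that (A4) together with Lemma~\ref{Lemma: bounded supremum} really does force $\eta_{j,0}h_d\to 0$, so that the tail enclosure applies to $\theta_0$ as well.
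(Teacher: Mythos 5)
Your proposal is correct and follows essentially the same architecture as the paper's proof: split the Kullback--Leibler integrand at $\tau$, get order-$\delta$ control on $[0,\tau]$ from the sup constraint in $B_{\delta,\tau}$ together with the Lipschitz properties of $\sigma$ and $\log\sigma$, use the constraint $\inf_{t>\tau}\eta_j(t)h_d(t)>-1$ to get $\sigma(Y_x(t))\geq e^{-t}$ and hence $|\log\sigma(Y_x(t))|\leq t$ on the tail, bound the cumulative hazard linearly in $t$, and then invoke (A3') to make the tail moments small uniformly in $x$ (hence in $i$); for (ii) you bound $V_i\leq \E_{\theta_0}(\Upsilon^2\mid x_i)$ by a constant independent of $i$ and sum against $i^{-2}$, exactly as the paper does. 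The one genuine divergence is your treatment of the true-parameter term on $\{T>\tau\}$: you derive the same enclosure $e^{-t}<\sigma(Y_{x0}(t))<1$ for $\theta_0$ by appealing to (A4) and Lemma \ref{Lemma: bounded supremum} (so that $\eta_{j,0}h_d\in\mathcal{C}_0$ tends to $0$, at the cost of a threshold $\tau_0$ depending on $\theta_0$), whereas the paper instead bounds $\int_\tau^\infty \log f_x(t,\theta_0)\,f_x(t,\theta_0)dt$ via $\log y\leq y-1$ and $f_x(\cdot,\theta_0)\leq\Omega_0$; your route costs the extra threshold but gives a clean two-sided bound $|\log f_x(t,\theta_0)|\lesssim t$ that also serves the variance estimate, arguably more transparently than the paper's corresponding step. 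You also make explicit what the paper leaves implicit, namely that (A3') yields $\sup_x\E_{\theta_0}(T\mid x)<\infty$ and $\sup_x\E_{\theta_0}(T^2\mid x)<\infty$. Two cosmetic points: the tail bound on the log-ratio of sigmoids should be $2T$ rather than $T$, and your choice of $\delta(\epsilon)$ must also absorb the term $c\,\delta(\epsilon)$ coming from applying (A3') (e.g.\ require $\delta(\epsilon)<\epsilon/(3c)$ as well); both are absorbed by taking $\delta(\epsilon)$ slightly smaller and do not affect the argument.
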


\subsubsection*{Positivity of $B_{\delta,\tau}$}
Lastly we verify, the prior positivity of the set $B_{\delta,\tau}$. Since this set is common to both, Lemma \ref{Lemma: KL1} and Lemma \ref{Lemma: KL2}, we just give a general proof and use it in the two settings. 

Indeed, for given $\delta(\epsilon)>0$ (as defined in Lemma \ref{Lemma: KL1} or \ref{Lemma: KL2}), we prove that there always exists $\tau^\star>1$ such that $B_{\delta(\epsilon),\tau}$ has positive measure for all $\tau\geq\tau^\star$. In this way, by finally taking $\tau\geq\max\{\tau(\epsilon),\tau^\star\}$, we are able to pick a set $B_{\delta(\epsilon),\tau}$, with positive prior probability and satisfying lemmas \ref{Lemma: KL1} or \ref{Lemma: KL2} (depending in which setting we are).

By independence of the Gaussian processes and parameter $\Omega$,
\begin{eqnarray}
\Pi(B_{\delta,\tau})=\prod_{j=0}^d\Pi\left(\underset{t\in[0,\tau]}{\sup}|\eta_{j}(t)-\eta_{j,0}(t)|\leq\frac{\delta}{1+\tau},\underset{t>\tau}{\inf}\{\eta_j(t)h_d(t)\}>-1\right)\nu\left(\left|\frac{\Omega}{\Omega_0}-1\right|<\delta\right)
\end{eqnarray}
with $\nu\left(\left|\frac{\Omega}{\Omega_0}-1\right|<\delta\right)>0$ by assumption (A3). Hence, we need to check the positivity of the sets
\begin{eqnarray}
\mathcal{G}^j_{\delta,\tau}&=&\left\{\underset{t\in[0,\tau]}{\sup}|\eta_j(t)-\eta_{j,0}(t)|\leq\frac{\delta}{1+\tau},\underset{t>\tau}{\inf}\{\eta_j(t)h_d(t)\}>-1\right\},
\end{eqnarray}
for every $j\in\{0,\ldots,d\}$.

In order to show this, we consider alternative events $\tilde{\mathcal{G}}^j_{\delta,\tau}$, in terms of the new processes $(\tilde{\eta}_j(t))_{t\geq 0}$ defined in equation \eqref{eq: non-stationary GP}, and show that from some $\tau_{S_j}>1$, 
\begin{eqnarray}
\tilde{\mathcal{G}}^j_{\delta,\tau}&\subseteq& \mathcal{G}^j_{\delta,\tau}, \forall \tau>\tau_{S_j}.\nonumber
\end{eqnarray} 

By Lemma \ref{Lemma: bounded supremum}, the processes $(\hat{\eta}_j(t))_{t\geq 0}$ can be seen as probability measures on the separable Banach space $\mathbb{B}=(\mathcal{C}_0,\|\|_{\infty})$. Then, by definition, for a function $\hat\eta_{j,0}$ in the support of  $\hat\eta_j$, there exits $\tau_{S_j}>1$ such that
\begin{eqnarray}
\hat\eta_{j,0}(t)&\geq&-\frac{1}{2} \hspace{0.5cm}\forall t\geq\tau_{S_j}.\nonumber
\end{eqnarray}
Moreover, if define $\eta_{j,0}=\frac{\hat{\eta}_{j,0}}{h_d}$ (assumption (A4)), since $h_d(t)$ is strictly decreasing
\begin{eqnarray}
\hat{\mathcal{G}}^j_{\delta,\tau}=\left\{\underset{t\in[0,\tau]}{\sup}|\hat \eta_j(t)-\hat \eta_{j,0}(t)|\leq\frac{\delta h_d(\tau)}{1+\tau},\underset{t>\tau}{\sup}|\hat \eta_j(t)-\hat \eta_{j,0}(t)|\leq\frac{1}{3}\right\}\subseteq\mathcal{G}^j_{\delta,\tau},\label{eq: Gepsilontau}
\end{eqnarray}
for all $\tau\geq\tau_{S_j}>1$.
As we may have a different $\tau_{S_j}$ for each $j\in\{0,\ldots,d\}$, we consider
\begin{eqnarray}
\tau_S&=&\underset{j\in\{0,\ldots,d\}}{\max}\{\tau_{S_j}\},\label{eqn: tauS}
\end{eqnarray} 
which guarantees that equation \eqref{eq: Gepsilontau} is satisfied for all $j$ (at the same time) when $\tau>\tau_S$. We continue by computing the probabilities of the sets $\hat{\mathcal{G}}^j_{\delta,\tau}$, which lead us to compute a centred small probability version of the event of interest.
\begin{lemma}\label{Lemma: Psotivity Joint}
Define the centred event
\begin{eqnarray}
C^j_{\delta,\tau}&=&\left\{\underset{t\in[0,\tau]}{\sup}|\hat \eta_j(t)|\leq\frac{\delta h_d(\tau)}{2(1+\tau)}, \underset{t>\tau}{\sup}|\hat \eta_j(t)|\leq\frac{1}{6}\right\},\nonumber
\end{eqnarray}
then, there exits $\tau_C>1$ such that 
\begin{eqnarray}
\Pi(C^j_{\delta,\tau})&>&0.\nonumber
\end{eqnarray}
for all $\tau\geq\tau_C$ and all $j\in\{0,\ldots,d\}$.
\end{lemma}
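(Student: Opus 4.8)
The plan is to recognise the event $C^j_{\delta,\tau}$ as containing a norm ball centred at the origin in the Banach space $\mathbb{B}=(\mathcal{C}_0,\|\cdot\|_\infty)$, and then to invoke the positivity of small balls for centred Gaussian measures on a separable Banach space. First I would set up the measure-theoretic picture: by Lemma \ref{Lemma: separable} the space $\mathbb{B}$ is a separable Banach space, and by Lemma \ref{Lemma: bounded supremum} the sample paths of $\hat\eta_j$ lie in $\mathbb{B}$. Since $\eta_j\sim\mathcal{GP}(0,\kappa_j)$ has mean zero, $\hat\eta_j(t)=h_d(t)\eta_j(t)$ is a centred Gaussian process, so its law is a centred Gaussian Radon measure on $\mathbb{B}$ (on a separable Banach space the cylindrical and Borel $\sigma$-algebras coincide, and the maps $g\mapsto\sup_{t\in[0,\tau]}|g(t)|$ and $g\mapsto\sup_{t>\tau}|g(t)|$ are continuous, hence Borel measurable, on $\mathbb{B}$).

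The second ingredient is the standard fact that for every $r>0$ one has $\Pi(\|\hat\eta_j\|_\infty<r)>0$: the topological support of a centred Gaussian measure on a separable Banach space contains the origin. One way to see this is to cover $\mathbb{B}$ by countably many balls of radius $r$ centred at points of a countable dense set, so at least one such ball has positive mass, and then to transfer this mass to the ball centred at $0$ using the symmetry of the Gaussian law (Anderson's inequality); see \cite{van2008reproducing}.

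It then remains to check that $C^j_{\delta,\tau}$ contains such a ball once $\tau$ is large. Put $r_\tau:=\frac{\delta h_d(\tau)}{2(1+\tau)}$. From the explicit form \eqref{eq: h function}, for $\tau>1$ we have $h_d(\tau)=\frac{d+1}{\tau+\log(1-e^{-\tau})}\to 0$, and since $1+\tau\to\infty$ we get $r_\tau\to 0$ as $\tau\to\infty$; as $h_d$ is continuous and positive (and does not depend on $j$), there is $\tau_C>1$, depending only on $\delta$ and $d$, such that $r_\tau\le\frac16$ for all $\tau\ge\tau_C$. For such $\tau$, if $\|\hat\eta_j\|_\infty<r_\tau$ then $\sup_{t\in[0,\tau]}|\hat\eta_j(t)|\le r_\tau$ and $\sup_{t>\tau}|\hat\eta_j(t)|\le r_\tau\le\frac16$, so $\{\|\hat\eta_j\|_\infty<r_\tau\}\subseteq C^j_{\delta,\tau}$; hence $\Pi(C^j_{\delta,\tau})\ge\Pi(\|\hat\eta_j\|_\infty<r_\tau)>0$ by the previous paragraph, uniformly over $j\in\{0,\ldots,d\}$.

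\textbf{Main obstacle.} The only delicate point is the first step, namely identifying the law of $\hat\eta_j$ with a centred Radon Gaussian measure on the separable Banach space $\mathbb{B}$ (which is exactly why Lemmas \ref{Lemma: separable} and \ref{Lemma: bounded supremum} were proved beforehand) together with the positivity-of-small-balls fact; once these are in place, the containment $\{\|\hat\eta_j\|_\infty<r_\tau\}\subseteq C^j_{\delta,\tau}$ and the choice of $\tau_C$ are routine, using only the explicit decay of $h_d$.
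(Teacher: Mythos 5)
Your proof is correct, but it takes a genuinely different route from the paper. The paper establishes the lemma quantitatively: it first applies the Gaussian correlation inequality (Lemma \ref{Lemma: Gaussian correlation inequality for GP}) to decouple the constraint on $[0,\tau]$ from the one on $(\tau,\infty)$, then bounds the first factor from below via the dyadic chaining estimate of Lemma \ref{Lemma:sup ineq 1} together with Gaussian concentration and assumption (A1) (this is Lemma \ref{Lemma: Positivity [0,tau]}), and bounds the second factor below by $1-p_{d,\frac16,\kappa_j}(\tau)$ using the tail estimate of Lemma \ref{Lemma: Sup tail greater than M}, which forces $\tau_C$ to be large. You instead observe that once $\tau$ is large enough that $r_\tau=\frac{\delta h_d(\tau)}{2(1+\tau)}\le\frac16$, the event $C^j_{\delta,\tau}$ contains the centred sup-norm ball of radius $r_\tau$ in $\mathbb{B}=(\mathcal{C}_0,\|\cdot\|_\infty)$, and then invoke the soft fact that a centred Gaussian measure on a separable Banach space charges every ball around the origin (via the support being the closure of the RKHS, as in Lemma 5.1 of van der Vaart and van Zanten which the paper itself uses in Theorem \ref{Lemma: Positivity of B}, or via your covering-plus-Anderson argument); Lemmas \ref{Lemma: separable} and \ref{Lemma: bounded supremum} supply exactly the Banach-space setting you need. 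Both arguments are valid: the paper's route yields explicit exponential lower bounds and reuses the tail lemma that is needed elsewhere anyway, while yours is shorter, avoids the correlation inequality entirely, and uses (A1) only implicitly through Lemma \ref{Lemma: bounded supremum}; since the downstream application in Theorem \ref{Lemma: Positivity of B} only needs qualitative positivity (the Kuelbs--Li--Linde shift inequality supplies the rest), your softer argument suffices for the paper's purposes.
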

It follows the main result of this section 
\begin{theorem}\label{Lemma: Positivity of B}
Consider $\hat{\mathcal{G}}^j_{\delta,\tau}$ as defined in equation \eqref{eq: Gepsilontau}, then there exists $\tau_P>\tau_C$ such that
\begin{eqnarray}
\Pi\left(\hat{\mathcal{G}}^j_{\delta,\tau}\right)&>&0.\nonumber
\end{eqnarray}
for all $\tau>\tau_P$ for all $j$.
\end{theorem}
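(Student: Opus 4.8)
The plan is to deduce the positivity of $\hat{\mathcal{G}}^j_{\delta,\tau}$, which is a neighbourhood of $\hat\eta_{j,0}$, from the positivity of the \emph{centred} event $C^j_{\delta,\tau}$ established in Lemma~\ref{Lemma: Psotivity Joint} by translating along a Cameron--Martin direction. By Lemmas~\ref{Lemma: separable} and~\ref{Lemma: bounded supremum}, each $\hat\eta_j$ may be viewed as a centred Gaussian random element of the separable Banach space $\mathbb{B}=(\mathcal{C}_0,\|\cdot\|_\infty)$; write $\mu_j$ for its law and $\mathbb{H}_j\subseteq\mathbb{B}$ for the associated reproducing kernel Hilbert space. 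The topological support of $\mu_j$ is the closure of $\mathbb{H}_j$ in $\mathbb{B}$ (see \cite{van2008reproducing}), and assumption (A4) says precisely that $\hat\eta_{j,0}$ lies in this support; hence, $\mathbb{H}_j$ being dense in its closure, for every $\gamma>0$ there is $h^\star\in\mathbb{H}_j$ (depending on $j$, $\tau$, $\gamma$) with $\|h^\star-\hat\eta_{j,0}\|_\infty<\gamma$.

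First I would fix $j\in\{0,\ldots,d\}$ and $\tau\ge\tau_C$, put $\gamma_\tau=\min\left\{\tfrac{\delta h_d(\tau)}{2(1+\tau)},\tfrac{1}{6}\right\}>0$, and choose $h^\star\in\mathbb{H}_j$ with $\|h^\star-\hat\eta_{j,0}\|_\infty<\gamma_\tau$. Splitting the suprema over $[0,\tau]$ and over $(\tau,\infty)$ and using $|\hat\eta_j-\hat\eta_{j,0}|\le|\hat\eta_j-h^\star|+|h^\star-\hat\eta_{j,0}|$, one checks directly that on $\{\hat\eta_j-h^\star\in C^j_{\delta,\tau}\}$ both defining inequalities of $\hat{\mathcal{G}}^j_{\delta,\tau}$ hold, since $\tfrac{\delta h_d(\tau)}{2(1+\tau)}+\gamma_\tau\le\tfrac{\delta h_d(\tau)}{1+\tau}$ on $[0,\tau]$ and $\tfrac16+\gamma_\tau\le\tfrac13$ on $(\tau,\infty)$; that is,
\[
\{\hat\eta_j-h^\star\in C^j_{\delta,\tau}\}\ \subseteq\ \hat{\mathcal{G}}^j_{\delta,\tau}.
\]
Since $h^\star\in\mathbb{H}_j$, the Cameron--Martin theorem gives that the law of $\hat\eta_j-h^\star$ is mutually absolutely continuous with $\mu_j$, so $\Pi(\hat\eta_j-h^\star\in C^j_{\delta,\tau})>0$ if and only if $\Pi(C^j_{\delta,\tau})>0$, which holds by Lemma~\ref{Lemma: Psotivity Joint} because $\tau\ge\tau_C$. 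Hence
\[
\Pi\left(\hat{\mathcal{G}}^j_{\delta,\tau}\right)\ \ge\ \Pi\left(\hat\eta_j-h^\star\in C^j_{\delta,\tau}\right)\ >\ 0 .
\]
Taking $\tau_P$ to be any fixed number with $\tau_P>\tau_C$, this applies to every $j$ and every $\tau>\tau_P$ (a suitable $h^\star$ exists for each such $j,\tau$), which is the assertion.

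The triangle-inequality bookkeeping is routine; the only things that require care are (i) that the whole setup is legitimate, i.e. that each $\hat\eta_j$ genuinely induces a Radon Gaussian measure on $\mathbb{B}$ so that the RKHS/support description and the Cameron--Martin theorem apply to these non-stationary processes on $\mathbb{R}^+$ — which is exactly what Lemmas~\ref{Lemma: separable}--\ref{Lemma: bounded supremum} together with (A4) provide — and (ii) that the accuracy $\gamma_\tau$ with which we must approximate $\hat\eta_{j,0}$ by an element of $\mathbb{H}_j$ is allowed to shrink as $\tau$ grows (it is of order $\delta h_d(\tau)/(1+\tau)$, hence of order $\tau^{-2}$), which is harmless because for each fixed $\tau$ only the \emph{existence} of such an $h^\star$ is needed, and that is guaranteed by density of $\mathbb{H}_j$ in the support. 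The point where the constants must match up exactly is the factor-$2$ split on $[0,\tau]$ and the $\tfrac16$ versus $\tfrac13$ split on the tail — but this is precisely why $C^j_{\delta,\tau}$ was defined with those radii in Lemma~\ref{Lemma: Psotivity Joint}, so no further work is needed there.
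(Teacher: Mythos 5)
Your proposal is correct, and its skeleton is the same as the paper's: view $\hat\eta_j$ as a Gaussian random element of $\mathbb{B}=(\mathcal{C}_0,\|\cdot\|_\infty)$ (Lemmas \ref{Lemma: separable} and \ref{Lemma: bounded supremum}), use the support characterization plus (A4) to pick an RKHS element close to $\hat\eta_{j,0}$, absorb the approximation error by the triangle inequality so that the shifted centred event sits inside $\hat{\mathcal{G}}^j_{\delta,\tau}$, and finish with Lemma \ref{Lemma: Psotivity Joint}. The one genuine difference is the tool used to handle the shift by $h^\star\in\mathbb{H}_j$: the paper invokes the Kuelbs--Li--Linde inequality (Lemma 5.2 of van der Vaart and van Zanten), which yields the quantitative lower bound $\Pi(\hat{\mathcal{G}}^j_{\delta,\tau})\geq e^{-\frac{1}{2}\|g_j\|_{\mathbb{H}_j}^2}\,\Pi(C^j_{\delta,\tau})$, whereas you invoke the Cameron--Martin theorem, which gives only mutual absolute continuity and hence equivalence of positivity. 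For the present statement positivity is all that is needed, so your argument suffices; the paper's version additionally records an explicit decentred-small-ball bound (the kind of estimate one would want for rates), at the cost of needing the event to be convex and symmetric (true for $C^j_{\delta,\tau}$) and of the extra threshold $\tau_B$ ensuring $\delta_B=\frac{\delta h_d(\tau)}{2(1+\tau)}\leq\frac{1}{6}$, which your choice $\gamma_\tau=\min\{\frac{\delta h_d(\tau)}{2(1+\tau)},\frac{1}{6}\}$ neatly avoids, letting you take any $\tau_P>\tau_C$.
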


\begin{proof}[Proof of Theorem \ref{Lemma: Positivity of B}]
By Lemma \ref{Lemma: bounded supremum}, the processes $(\hat{\eta}_j(t))_{t\geq 0}$ can be seen as probability measures on the separable Banach space $\mathbb{B}=(\mathcal{C}_0,\|\|_{\infty})$. Hence, by Lemma 5.1 of Van der Vaart and van Zanten (2008) \cite{van2008reproducing}, the support of $(\hat{\eta}_j)_{t\geq 0}$ is equal to the closure of the reproducing kernel Hilbert space  $\mathbb{H}_j\subset \mathbb{B}$ in the Banach space $\mathbb{B}$.

Let $\delta_B=\frac{\delta h_d(\tau)}{2(1+\tau)}$ and take $\tau_B>0$ such that $0<\delta_B<\frac{1}{6}$ for all $\tau>\tau_B$. Notice this is possible as $h_d(t)$ is a strictly decreasing for $t>1$. Consider $g_j\in\mathbb{H}_j$ with $\left\|g_j-\hat{\eta}_{0,j}\right\|_{\infty}\leq\delta_B$, by triangular inequality,
\begin{eqnarray}
\left\|\hat{\eta}_j-\hat{\eta}_{j,0}\right\|_{\infty}&\leq&\left\|\hat{\eta}_j-g_j\right\|_{\infty}+\delta_B,
\end{eqnarray}
hence 
\begin{eqnarray}
\Pi\left(\hat{\mathcal{G}}^j_{\delta,\tau}\right)&=&
\Pi\left(\underset{t\in[0,\tau]}{\sup}|\hat{\eta}_j-\hat{\eta}_{j,0}|(t)\leq\frac{\delta h_d(\tau)}{1+\tau},\underset{t>\tau}{\sup}|\hat{\eta}_j-\hat{\eta}_{j,0}|(t)\leq\frac{1}{3}\right)\nonumber\\
&\geq& \Pi\left(\underset{t\in[0,\tau]}{\sup}|\hat{\eta}_j-g_j|(t)\leq\frac{\delta h_d(\tau)}{2(1+\tau)},\underset{t>\tau}{\sup}|\hat{\eta}_j-g_j|(t)\leq\frac{1}{3}-\delta_B\right)\nonumber\\
&\geq&e^{-\frac{1}{2}\left\|g_j\right\|_{\mathbb{H}_j}^2}\Pi\left(\underset{t\in[0,\tau]}{\sup}|\hat{\eta}_j|(t)\leq\frac{\delta h_d(\tau)}{2(1+\tau)},\underset{t>\tau}{\sup}|\hat{\eta}_j|(t)\leq\frac{1}{6}\right),\nonumber
\end{eqnarray}

where last inequality comes from the result of Kuelbs, Li and Lindi (1994) \cite{kuelbs1994gaussian} (Lemma 5.2 in Van der Vaart and van Zanten (2008) \cite{van2008reproducing}) and since $\delta_B\leq\frac{1}{6}$. 

On the other hand, by Lemma \ref{Lemma: Psotivity Joint}, there exits $\tau_C>1$ such that for all $\tau\geq\tau_C$,
\begin{eqnarray}
\Pi\left(\underset{t\in[0,\tau]}{\sup}|\hat{\eta}_j|(t)\leq\frac{\delta h_d(\tau)}{2(1+\tau)},\underset{t>\tau}{\sup}|\hat{ \eta_j}|(t)\leq\frac{1}{6}\right)>0,
\end{eqnarray}
for all $j\in\{0,\ldots,d\}$. Finally, we conclude
\begin{eqnarray}
\Pi(\mathcal{G}^j_{\delta,\tau})>0.\nonumber
\end{eqnarray}
for all $\tau>\tau_P=\max\{\tau_B,\tau_C\}$ and for all $j\in\{0,\ldots,d\}$.
\end{proof}

Given $\epsilon>0$, by Lemma \ref{Lemma: KL1} (or alternatively Lemma \ref{Lemma: KL2} if we are in the fixed covariates design), there exits $\delta(\epsilon)>0$ and $\tau(\epsilon)>1$ such that for all $\tau>\tau(\epsilon)$, the set $B_{\delta(\epsilon),\tau}$ is contained in a Kullback-Leibler neighbourhood of $\theta_0$ of radius $\epsilon$. On the other hand, by equation \eqref{eqn: tauS} and Lemma \ref{Lemma: Positivity of B}, for the same $\delta(\epsilon)>0$, there exists $\tau_S$ and $\tau_P$ such that for all $\tau>\max\{\tau_S,\tau_P\}$, $\Pi(B_{\delta(\epsilon),\tau})>0$. Therefore, we conclude the proof of condition (G1) (or (T1) respectively) by taking any set $B_{\delta(\epsilon),\tau}$ with $\tau>\{\tau_S,\tau_P,\tau(\epsilon)\}>1$.

\section{Discussion}
We have proved almost surely posterior consistency for a metric that considered  survival functions over a large class of possible stratifications of the covariate space $[0,1]^d$. This was done under the further assumption that covariates arise in a random design, i.e. they follow a probability distribution $Q$ on $[0,1]^d$, and under a non-random design.
 
In general, we found that the assumptions listed in section \ref{section: assumptions} were quite reasonable and natural for the type of data we were dealing with. 
In particular, we were able to prove the consistency result for stationary kernels $\kappa$, such that $(\kappa_j(0)-\kappa_j(2^{-n}))^{-1}\geq n^6$. We believe this is quite general as it includes a large family of kernels. Some important examples are the Squared exponential and the Ornstein-Uhlenbeck kernel. Furthermore, assumptions $(A3)$ and $(A3')$ imposed some restrictions over the tail of the distribution of the random variable we were considering. We believe they are not too restrictive as it is not unusual to consider random variables with finite second moment.

Additionally, there are two specific assumptions which come implicitly from the definition of the model in equation \eqref{eqn:model}, and may have a stronger effect for our results. We review them and discuss up to what extent they could be modified. In section 2, we inherit the specific assumption that the link function $\sigma$ is defined as the sigmoid function. We believe it should be possible to generalize this function to an arbitrary bounded and Lipchitz function since our proof just uses these particular properties of the sigmoid function. With respect the covariates, there is also a strong assumption as we assume $X$ takes values on the compact space $[0,1]^d$. While considering something more general as $\mathbb{R}^d$ will not change at all the results for the sequence of tests, it would require to completely modify the proofs regarding the Kullback-Leibler neighbourhoods in Lemma \ref{Lemma: KL1} and \ref{Lemma: KL2}. 

Lastly, it is worth noticing, that a more general version of the model including interactions between covariates could be considered, for example
\begin{eqnarray}
\lambda_x(t)&=&\Omega\sigma\left(\eta_0(t)+\sum_{j=1}^dx_j\eta_j(t)+\sum_{j=1}^d\sum_{i=j+1}^dx_jx_i\eta_{j,i}(t)\right).\nonumber
\end{eqnarray}
It is not hard to check the same techniques apply for this case, of for even more complex interactions.

\section{Acknowledgements}
The authors thanks Nicol\'as Rivera for comments that greatly improved  this manuscript. We also thank Judith Rousseau for her comments at early stage of this research.
\newpage
\section{Appendix}

\begin{proof}[Proof of Lemma \ref{Lemma: Testing Conditions without censoring} and \ref{Lemma: Testing Conditions without censoring Qn}]

We proceed to prove Lemma \ref{Lemma: Testing Conditions without censoring} and Lemma \ref{Lemma: Testing Conditions without censoring Qn}. Let $\epsilon>0$, by a direct application of the bounded  difference inequality \cite[Theorem 2.2]{devroye2012combinatorial}, we obtain 
\begin{eqnarray}
\Prob_{\theta_0}\left(\left|\underset{A\in\mathcal{A}}{\sup}|\mu_n(A)-\mu_{\theta_0}(A)|-\mathbb{E}_{\theta_0}\left(\underset{A\in\mathcal{A}}{\sup}|\mu_n(A)-\mu_{\theta_0}(A)|\right)\right|>\frac{\epsilon}{2}\right)&\leq&2e^{-n\frac{\epsilon^2}{2}}.\label{Expectation Shatter}
\end{eqnarray}
For the case of Lemma \ref{Lemma: Testing Conditions without censoring Qn}, the same result can be applied, since the function  
\begin{eqnarray}
g(T_1,\ldots, T_n)&=&\underset{A=A_1\times A_2\in\mathcal{A}}{\sup}\left|\mu_n(A)-\int_{A_2}\mu^x_{\theta_0}(A_1)Q_n(dx)\right|\nonumber\\
&=&\underset{A=A_1\times A_2\in\mathcal{A}}{\sup}\left|\sum_{i=1}^n\frac{\delta_{x_{i}}(A_2)}{n}\left(\delta_{T_i}(A_1)-\mu^{x_i}_{\theta_0}(A_1)\right)\right|,\nonumber
\end{eqnarray}
changes by at most $1/n$ when changing $T_i$, for a fixed $(x_{i})_{i\geq 1}$ and regardless of what $\mathcal{A}$ is. 
Therefore, by the bounded difference inequality, and for sets $A=A_1\times A_2 \in\mathcal{A}$
{\small{
\begin{eqnarray}
\tilde{\Prob}_{\theta_0}\left(\underset{A\in\mathcal{A}}{\sup}\left|\mu_n(A)-\int_{A_2}\mu^x_{\theta_0}(A_1)Q_n(dx)\right|-\tilde{\E}_{\theta_0}\left(\underset{A\in\mathcal{A}}{\sup}\left|\mu_n(A)-\int_{A_2}\mu_{\theta_0}^x(A_1)Q_n(dx)\right|\right)\right)&\leq&2e^{-n\frac{\epsilon^2}{2}}.\label{Expectation Shatter 2}\end{eqnarray}
}}
Going back to Lemma \ref{Lemma: Testing Conditions without censoring}, the result in \cite[Theorem 3.1]{devroye2012combinatorial} gives a bound for the expectation in equation \eqref{Expectation Shatter}, in terms of the VC shatter coefficient $S_{\mathcal{A}}(n)$ defined in equation \eqref{VC}. Indeed,
\begin{eqnarray}
\E_{\theta_0}\left(\underset{A\in\mathcal{A}}{\sup}|\mu_n(A)-\mu_{\theta_0}(A)|\right)&\leq&2\sqrt{\frac{\log 2S_{\mathcal{A}}(n)}{n}}.\nonumber
\end{eqnarray}
For the case of Lemma \ref{Lemma: Testing Conditions without censoring Qn}, the proof of \cite[Theorem 3.1]{devroye2012combinatorial} can be replicated for obtaining the same bound for the expectation in equation \eqref{Expectation Shatter 2} for a fixed sequence of covariates $(x_{i})_{i\geq 1}$, then
\begin{eqnarray}
\tilde{\E}_{\theta_0}\left(\underset{A\in\mathcal{A}}{\sup}\left|\mu_n(A)-\int_{A_2}\mu^x_{\theta_0}(A_1)Q_n(dx)\right|\right)
&\leq&2\sqrt{\frac{\log 2S_{\mathcal{A}}(n)}{n}}.\nonumber
\end{eqnarray}
As in both cases we have exactly the same bounds in terms of the shatter coefficient of $S_{\mathcal{A}}(n)$, we just prove Lemma \ref{Lemma: Testing Conditions without censoring} and argue the proof for Lemma \ref{Lemma: Testing Conditions without censoring Qn} is exactly the same. 

By \cite[Lemma 4.1]{devroye2012combinatorial}, the VC dimension of the class of subsets $\mathcal{A}$ (rectangles in $\mathbb{R}^{+}\times[0,1]^d$) is upper bounded by $2(d+1)$. Furthermore, \cite[Corollary 4.1]{devroye2012combinatorial} provides a bound for the VC shatter coefficient of the class $\mathcal{A}$ given by
\begin{eqnarray}
S_{\mathcal{A}}(n)&\leq&(n+1)^{2(d+1)}.\nonumber
\end{eqnarray}   
Combining the last results, we get
\begin{eqnarray}
\mathbb{E}_{\theta_0}\left(\underset{A\in\mathcal{A}}{\sup}|\mu_n(A)-\mu_{\theta_0}(A)|\right)&\leq&2\sqrt{\frac{\log(2(n+1)^{2(d+1)})}{n}},\nonumber
\end{eqnarray}
which decreases to 0 as n goes to infinity. From this result, we conclude there exists $N>0$ large enough such that for all $n\geq N$
\begin{eqnarray}
\mathbb{E}_{\theta_0}(\phi_n)=\Prob_{\theta_0}\left(\underset{A\in\mathcal{A}}{\sup}|\mu_n(A)-\mu_{\theta_0}(A)|>\frac{\epsilon}{4}\right)\leq 2e^{-n\frac{\epsilon^2}{2}}.\nonumber
\end{eqnarray}
We proceed to prove the exponentially small bound for the type II errror. By definition of the supremum there exists a rectangle $A^\star\in\mathcal{A}$ such that $|\mu_{\theta_0}(A^\star)-\mu_{\theta_1}(A^\star)|>\frac{2\epsilon}{3}$, with $\theta_1\in\{\theta:d(\theta,\theta_0)>\epsilon\}$. On the other hand,
\begin{eqnarray}
\E_{\theta_1}(1-\phi_n)&=&\Pr_{\theta_1}\left(\underset{A\in\mathcal{A}}{\sup}\left|\mu_n(A)-\mu_{\theta_0}(A)\right|\leq\frac{\epsilon}{4}\right)\leq\Pr_{\theta_1}\left(\left|\mu_n(A^\star)-\mu_{\theta_0}(A^\star)\right|\leq\frac{\epsilon}{4}\right).\label{eq: Fn-F0epsilon}
\end{eqnarray}
Define the event,
\begin{eqnarray}
E=\left\{\underset{A\in\mathcal{A}}{\sup}\left|\mu_n(A)-\mu_{\theta_1}(A)\right|\leq\frac{\epsilon}{4}\right\},\nonumber
\end{eqnarray}
we proceed by computing the probability of equation \eqref{eq: Fn-F0epsilon} using total probability,
\begin{eqnarray}
\eqref{eq: Fn-F0epsilon}&=&\Pr_{\theta_1}\left(\left|\mu_n(A^\star)-\mu_{\theta_0}(A^\star)\right|\leq\frac{\epsilon}{4}\cap E\right)+\Pr_{\theta_1}\left(\left|\mu_n(A^\star)-\mu_{\theta_0}(A^\star)\right|\leq\frac{\epsilon}{4}\cap E^c\right)\nonumber\\
&\leq&\Pr_{\theta_1}\left(\left|\mu_n(A^\star)-\mu_{\theta_0}(A^\star)\right|\leq\frac{\epsilon}{4}\cap \underset{A\in\mathcal{A}}{\sup}\left|\mu_n(A)-\mu_{\theta_1}(A)\right|\leq\frac{\epsilon}{4}\right)+\Pr_{\theta_1}\left(\underset{A\in\mathcal{A}}{\sup}\left|\mu_n(A)-\mu_{\theta_1}(A)\right|>\frac{\epsilon}{4}\right)\nonumber\\
&\leq&\Prob_{\theta_1}\left(\left|\mu_n(A^\star)-\mu_{\theta_0}(A^\star)\right|\leq\frac{\epsilon}{4}\cap \left| \mu_n(A^\star)-\mu_{\theta_1}(A^\star)\right|\leq\frac{\epsilon}{4}\right)+2e^{-n\frac{\epsilon^2}{2}}.\label{eq: Fn-F0epsilon2}
\end{eqnarray}
By triangle inequality, we have
\begin{eqnarray}
|\mu_{\theta_0}-\mu_{\theta_1}|-|\mu_n-\mu_{\theta_1}|\leq |\mu_n-\mu_{\theta_0}|.\nonumber
\end{eqnarray}
Plugging in this result in equation \eqref{eq: Fn-F0epsilon2},
\begin{eqnarray}
\eqref{eq: Fn-F0epsilon2}&\leq&\Pr_{\theta_1}\left(|\mu_{\theta_0}(A^\star)-\mu_{\theta_1}(A^\star)|-|\mu_n(A^\star)-\mu_{\theta_1}(A^\star)|\leq\frac{\epsilon}{4}\cap \left|\mu_n(A^\star)-\mu_{\theta_1}(A^\star)\right|\leq\frac{\epsilon}{4}\right)+2e^{-n\frac{\epsilon^2}{2}}\nonumber\\
&\leq&\Pr_{\theta_1}\left(\emptyset\right)+2e^{-n\frac{\epsilon^2}{2}},\nonumber
\end{eqnarray}
since $|\mu_{\theta_0}(A^\star)-\mu_{\theta_1}(A^\star)|>\frac{2\epsilon}{3}$ and $|\mu_n(A^\star)-\mu_{\theta_1}(A^\star)|\leq\frac{\epsilon}{4}$.
Finally,
\begin{eqnarray}
\E_{\theta_1}(1-\phi_n)&\leq&2e^{-n\frac{\epsilon^2}{2}}.\nonumber
\end{eqnarray}
Taking the supremum over the $\{\theta_1\in\Theta:d(\theta_0,\theta_1)>\epsilon\}$ we conclude the result.
\end{proof}

\begin{lemma}\label{Lemma: Bound KL with the sup in [0,tau]}
Let $\delta>0$, $\tau>1$ and $x\in[0,1]^d$. Define
\begin{eqnarray}
B_{\delta,\tau}&=&\left\{(\eta_0,\ldots,\eta_d):\underset{t\in[0,\tau]}{\sup}|\eta_j(t)-\eta_{j,0}(t)|\leq\frac{\delta}{1+\tau}, \forall j\in\{0,\ldots,d\}\right\}\nonumber
\end{eqnarray}
and
\begin{eqnarray}
Y_x(t)&=&\eta_0(t)+\sum_{j=1}^d x_j\eta_j(t)\nonumber
\end{eqnarray}
and $Y_{x0}(t)$, when replacing $\theta$ by $\theta_0$. Let $\sigma(x)$ be the sigmoid function, then for all $\delta>0$ and $(\eta_0,\ldots,\eta_d)\in B_{\delta,\tau}$,
\begin{eqnarray}
\underset{x\in[0,1]^d}{\max}\underset{t\in[0,\tau]}{\sup}|\sigma(Y_x(t))-\sigma(Y_{x0}(t))|&\leq&\frac{d+1}{1+\tau}\delta,\label{eq :supsigma}
\end{eqnarray}
and
\begin{eqnarray}
\underset{x\in[0,1]^d}{\max}\underset{t\in[0,\tau]}{\sup}|\log\sigma(Y_x(t))-\log\sigma(Y_{x0}(t))|&\leq&\frac{d+1}{1+\tau}\delta.\label{eq :sulogpsigma}
\end{eqnarray}
\end{lemma}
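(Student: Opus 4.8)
The statement is a pointwise Lipschitz estimate, so the plan is to first control the deviation of the linear combination $Y_x(t)-Y_{x0}(t)$ on the window $[0,\tau]$, and then push this through the maps $\sigma$ and $\log\sigma$ using the fact that both are $1$-Lipschitz. I do not expect any genuine obstacle here; the only thing to be careful about is verifying the Lipschitz constants for $\sigma$ and especially for $\log\sigma$.

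\textbf{Step 1: bound the deviation of the linear predictor.} Write
\begin{eqnarray}
Y_x(t)-Y_{x0}(t)&=&\big(\eta_0(t)-\eta_{0,0}(t)\big)+\sum_{j=1}^d x_j\big(\eta_j(t)-\eta_{j,0}(t)\big).\nonumber
\end{eqnarray}
Since $x\in[0,1]^d$, every coefficient $x_j$ lies in $[0,1]$, so by the triangle inequality, for $t\in[0,\tau]$ and $(\eta_0,\ldots,\eta_d)\in B_{\delta,\tau}$,
\begin{eqnarray}
|Y_x(t)-Y_{x0}(t)|&\leq&\sum_{j=0}^d|\eta_j(t)-\eta_{j,0}(t)|\ \leq\ (d+1)\,\frac{\delta}{1+\tau}.\nonumber
\end{eqnarray}
This bound is uniform in $x\in[0,1]^d$ and in $t\in[0,\tau]$, which is exactly the right-hand side appearing in the lemma.

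\textbf{Step 2: transfer through $\sigma$ and $\log\sigma$.} For $\sigma(u)=(1+e^{-u})^{-1}$ one has $\sigma'(u)=\sigma(u)(1-\sigma(u))\in(0,\tfrac14]$, so $\sigma$ is $1$-Lipschitz on $\mathbb{R}$; applying this to $u=Y_x(t)$ and $u=Y_{x0}(t)$ and taking the supremum/maximum gives \eqref{eq :supsigma}. For the second inequality, compute $(\log\sigma)'(u)=\sigma'(u)/\sigma(u)=1-\sigma(u)\in(0,1)$, hence $\log\sigma$ is also $1$-Lipschitz on $\mathbb{R}$; the same substitution and supremum yields \eqref{eq :sulogpsigma}. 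The mild subtlety, and the step I would double-check, is this derivative computation for $\log\sigma$, since it is what makes the constant in the log-estimate equal to the constant in the $\sigma$-estimate (one could also use $\log\sigma(u)=-\log(1+e^{-u})$ directly). Everything else is routine.
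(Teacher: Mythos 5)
Your proof is correct and follows essentially the same route as the paper: a Lipschitz (mean-value) bound for $\sigma$ and $\log\sigma$ with constant $1$, combined with the triangle inequality and $x_j\in[0,1]$ to bound $|Y_x(t)-Y_{x0}(t)|$ by $(d+1)\delta/(1+\tau)$. Your explicit derivative computation $(\log\sigma)'(u)=1-\sigma(u)\in(0,1)$ in fact spells out a detail the paper leaves implicit (it only says the log case ``follows by the same argument''), so nothing is missing.
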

\begin{proof}
Let $a=Y_x(t)$ and $b=Y_{x0}(t)$, by the mean value theorem, there exits $c$ between $a$ and $b$ such that
\begin{eqnarray}
|\sigma(a)-\sigma(b)|&=&\left|\frac{e^c}{(e^c+1)}(a-b)\right|\nonumber\\
&\leq&|a-b|\nonumber\\
&=&\left|(\eta_0-\eta_{0,0})(t)+\sum_{j=1}^dx_j(\eta_j-\eta_{j,0})(t)\right|\nonumber
\end{eqnarray}
By taking supremum on both sides of the above equation, we get equation \eqref{eq :supsigma}, indeed
\begin{eqnarray}
\underset{t\in[0,\tau]}{\sup}|\sigma(Y_{x}(t))-\sigma(Y_{x0}(t))|&\leq&\underset{t\in[0,\tau]}{\sup}\left|(\eta_0-\eta_{0,0})(t)+\sum_{j=1}^dx_j(\eta_j-\eta_{j,0})(t)\right|\nonumber\\
&\leq&\underset{t\in[0,\tau]}{\sup}\left|(\eta_0-\eta_{0,0})(t)\right|+\sum_{j=1}^d|x_j|\underset{t\in[0,\tau]}{\sup}\left|\eta_j-\eta_{j0}(t)\right|\nonumber\\
&\leq&\frac{\delta}{1+\tau}+\sum_{j=1}^d|x_j|\frac{\delta}{1+\tau}\nonumber\\
\underset{x\in[0,1]^d}{\max}\underset{t\in[0,\tau]}{\sup}|\sigma(Y_{x}(t))-\sigma(Y_{x0}(t))|
&\leq &\frac{d+1}{1+\tau}\delta\nonumber
\end{eqnarray}
The proof for equation \eqref{eq :sulogpsigma} follows by the same argument.
\end{proof} 

\begin{proof}[Proof of Lemma \ref{Lemma: KL1} and \ref{Lemma: KL2}]

The function
\begin{eqnarray}
f_x(t;\theta)&=&\Omega\sigma(Y_x(t))e^{-\int_0^t\Omega\sigma(Y_x(s))ds}\nonumber
\end{eqnarray}
can be interpreted as the conditional distribution of a survival time $T$ given the covariate $X=x$ in the random design. But it also can be interpreted as the distribution of the time $T$ for a fixed valued of the covariate $x$, in the fixed design. Additionally, we define
\begin{eqnarray}
Y_x(t)&=&\eta_0(t)+\sum_{j=1}^d x_j\eta_j(t)\nonumber
\end{eqnarray}
and $Y_{x0}(t)$, when replacing $\theta$ by $\theta_0$.

\begin{itemize}
\item[$\clubsuit$]\textbf{[Random design, condition (G1)]}\\
Using the towering property of conditional expectation, we have
\begin{eqnarray}
\E_{\theta_0}\left(\log\frac{f_X(T,\theta_0)}{f_X(T,\theta)}\right)&=&\int_{[0,1]^d}\mathbb{E}_{\theta_0}\left(\log\frac{f_x(T,\theta_0)}{f_x(T,\theta)}\given X=x\right)Q(dx).\nonumber
\end{eqnarray}
Moreover, by definition
{\small{
\begin{eqnarray}
\mathbb{E}_{\theta_0}\left(\log\frac{f_x(T,\theta_0)}{f_x(T,\theta)}\given X=x\right)&=&
\int_0^\tau\log\frac{f_x(t,\theta_0)}{f_x(t,\theta)}f_x(t,\theta_0)dt+\int_\tau^\infty\log\frac{f_x(t,\theta_0)}{f_x(t,\theta)}f_x(t,\theta_0)dt.\label{KL}
\end{eqnarray}
}}
The first integral breaks into three parts,
\begin{eqnarray}
\int_0^\tau\log\frac{f_x(t,\theta_0)}{f_x(t,\theta)}f_x(t,\theta_0)dt&\leq&\int_0^\tau\log\frac{\Omega_0}{\Omega}f_x(t,\theta_0)dt+\int_0^\tau\left|\log\frac{\sigma(Y_{x0}(t))}{\sigma(Y_{x}(t))}\right|f_x(t,\theta_0)dt\nonumber\\
&+&\int_0^\tau\left|\int_0^{t}\Omega\sigma(Y_{x}(s))-\Omega_0\sigma(Y_{x0}(s))ds\right|f_x(t,\theta_0)dt\nonumber\\
&=&I_1+I_2+I_3.\label{eq: F1}
\end{eqnarray}
We proceed to bound each of the integrals in equation \eqref{eq: F1}. For $\theta\in B_{\delta,\tau}$,
\begin{eqnarray}
I_1=\int_0^\tau\log\frac{\Omega_0}{\Omega}f_x(t,\theta_0)dt&\leq&\int_0^\tau\left(\frac{\Omega_0}{\Omega}-1\right)f_x(t,\theta_0)dt\leq\frac{\delta}{1-\delta}.\nonumber
\end{eqnarray}
For the second integral, by equation \eqref{eq :sulogpsigma},
\begin{eqnarray}
I_2=\int_0^\tau\left|\log\frac{\sigma(Y_{x0}(t))}{\sigma(Y_{x}(t))}\right|f_x(t,\theta_0)dt&\leq&\int_0^\tau\frac{d+1}{\tau+1}\delta f_x(t,\theta_0)dt<\frac{d+1}{\tau+1}\delta,\nonumber
\end{eqnarray}
since $f_x(\cdot,\theta_0)$ a p.d.f..\\
Lastly, we break the last integral of equation \eqref{eq: F1} into two terms,
\begin{eqnarray}
I_3&=&\int_0^\tau\left|\int_0^{t}\left(\Omega\sigma(Y_{x}(s))-\Omega_0\sigma(Y_{x}(s))+\Omega_0\sigma(Y_{x}(s))-\Omega_0\sigma(Y_{x0}(s))\right)ds\right|f_x(t,\theta_0)dt\nonumber\\
&\leq&\int_0^\tau\left|\int_0^{t}(\Omega\sigma(Y_{x}(s))-\Omega_0\sigma(Y_{x}(s)))ds\right|f_x(t,\theta_0)dt\nonumber\\
&+&\int_0^\tau\left|\int_0^{t}(\Omega_0\sigma(Y_{x}(s))-\Omega_0\sigma(Y_{x0}(s)))ds\right|f_x(t,\theta_0)dt\nonumber\\
&=&I_{3_{1}}+I_{3_{2}}.\nonumber
\end{eqnarray}
For the first part we use the fact that $|\Omega-\Omega_0|\leq\delta\Omega_0$ and that $\int_0^t\sigma(Y_x(t))\leq t$ (since $\sigma$ is the sigmoid function and thus upper bounded by 1), hence 
\begin{eqnarray}
I_{3,1}&\leq&\int_0^\tau|\Omega-\Omega_0|tf_x(t,\theta_0)dt\leq\delta\Omega_0\mathbb{E}_{\theta_0}(T|X=x).\nonumber
\end{eqnarray}
For the second term, by equation \eqref{eq :supsigma}
\begin{eqnarray}
I_{3,2}&\leq&\int_0^\tau\Omega_0\int_0^{t}\left|\sigma(Y_{x}(s))-\sigma(Y_{x0}(s))\right|dsf_x(t,\theta_0)dt\nonumber\\
&\leq&\int_0^\tau\Omega_0\int_0^{t}\frac{d+1}{\tau+1}\delta dsf_x(t,\theta_0)dt\nonumber\\
&\leq&\Omega_0\frac{\tau(d+1)}{\tau+1}\delta\leq\Omega_0(d+1)\delta.\nonumber
\end{eqnarray}
Putting everything together,
\begin{eqnarray}
\int_0^\tau\log\frac{f_x(t,\theta_0)}{f_x(t,\theta)}f_x(t,\theta_0)dt&\leq&I_1+I_2+I_3\nonumber\\
&\leq&\delta\left(\frac{1}{1-\delta}+\frac{d+1}{\tau+1}+\Omega_0(d+1)+\Omega_0\mathbb{E}_{\theta_0}(T|X=x)\right).\label{Th3 cond1}
\end{eqnarray}
For $\delta\in(0,1/2)$ and $\tau\geq 1$, we have
\begin{eqnarray}
\mathbb{E}\left(\int_0^\tau\log\frac{f_x(t,\theta_0)}{f_x(t,\theta)}f_x(t,\theta_0)dt\right)&\leq&\delta\left(2+\frac{d+1}{2}+\Omega_0(d+1)+\Omega_0\int_{x\in[0,1]^d}\mathbb{E}_{\theta_0}(T|X=x)Q(dx)\right).\nonumber
\end{eqnarray}
Using assumption A3 we have that the latter integral is finite, hence we can choose $\delta$ small enough such that the whole term is less than $\epsilon/2$.\\

Now take the second integral of equation \eqref{KL} and break it into two integrals,
\begin{eqnarray}
\int_\tau^\infty\log\frac{f_x(t,\theta_0)}{f_x(t,\theta)}f_x(t,\theta_0)dt&\leq&\int_\tau^\infty\log f_x(t,\theta_0)f_x(t,\theta_0)dt+\int_\tau^\infty|\log f_x(t,\theta)|f_x(t,\theta_0)dt\nonumber\\
&=&I_1+I_2.\nonumber
\end{eqnarray}
For the first integral, since $\log(x)\leq x-1$,
\begin{eqnarray}
I_1&\leq&\int_\tau^\infty(f_x(t,\theta_0)-1)f_x(t,\theta_0)dt\nonumber\\
&\leq&\int_\tau^\infty f_x(t,\theta_0)^2dt.\nonumber
\end{eqnarray}
For the second integral, we have that
\begin{eqnarray}
I_2&=&\int_\tau^\infty\left|\log \left(\Omega\sigma(Y_x(t))e^{-\Omega\int_0^t\sigma(Y_x(s))ds}\right)\right| f_x(t,\theta_0)dt\nonumber\\
&\leq&\int_\tau^\infty|\log\Omega| f_x(t,\theta_0)dt+\int_\tau^\infty|\log \sigma(Y_x(t))|f_x(t,\theta_0)dt\nonumber\\
&+&\int_\tau^\infty\left|\Omega\int_0^t\sigma(Y_x(s))ds\right|f_x(t,\theta_0)dt\nonumber\\
&=&I_{2,1}+I_{2,2}+I_{2,3}.\nonumber
\end{eqnarray}
Recall that for $\theta\in B_{\delta,\tau}$ we have that $\Omega_0(1-\delta)\leq\Omega\leq\Omega_0(1+\delta)$ and $\delta\in(0,1/2)$, hence the first integral is finite and bounded by,
\begin{eqnarray}
I_{2,1}&\leq&K_0\mathbb{P}_{\theta_0}(T>\tau|X=x),\nonumber
\end{eqnarray}
where $K_0$ is a constant depending on $\Omega_0$ and $\delta$.\\

For $\theta \in B_{\delta,\tau}$ we have that $\eta_j(t)h_d(t)>-1$ for all $t\geq\tau$ and all $j\in\{0,\ldots,d\}$. Then, by the definition of $h_d(t)$, for all $t\geq\tau$
\begin{eqnarray}
Y_x(t)\geq\sum_{j=0}^d\eta_j(t)\geq-t-\log(1-e^{-t}).\label{weird condition}
\end{eqnarray}
Furthermore, notice that the inverse $\sigma^{-1}(e^{-t})=-t-\log(1-e^{-t})$, which finally implies $0\geq\log\sigma(Y_x(t))\geq-t$ since the function $\sigma$ is upper bounded by one and covariates $X\in[0,1]^d$. Using this last argument, we get a bound for
\begin{eqnarray}
I_{2,2}&\leq&\int_\tau^\infty t f_x(t,\theta_0)dt=\mathbb{E}_{\theta_0}(T 1_{\{T>\tau\}}|X=x).\nonumber
\end{eqnarray}
Finally, for the last integral we have
\begin{eqnarray}
I_{2,3}&\leq&\int_\tau^\infty\Omega_0(1+\delta)tf_x(t,\theta_0)dt=\Omega_0(1+\delta)\mathbb{E}_{\theta_0}(T 1_{\{T>\tau\}}|X=x).\nonumber
\end{eqnarray}
Putting everything together, we have
\begin{eqnarray}
\int_\tau^\infty\log\frac{f_x(t,\theta_0)}{f_x(t,\theta)}f_x(t,\theta_0)dt&\leq&\int_\tau^\infty f_x(t,\theta_0)^2dt+K_0\mathbb{P}_{\theta_0}(T>\tau|X=x)\nonumber\\
&+&(1+\Omega_0(1+\delta))\mathbb{E}_{\theta_0}(T 1_{\{T>\tau\}}|X=x).\label{Th3 cond2}
\end{eqnarray}
By definition, $f_x(t,\theta_0)$ is bounded by $\Omega_0$ for all $t\geq 0$ and $x\in[0,1]^d$. Using this fact and by taking expectation, we get
\begin{eqnarray}
\mathbb{E}\left(\int_\tau^\infty\log\frac{f_x(t,\theta_0)}{f_x(t,\theta)}f_x(t,\theta_0)dt\right)&\leq&(\Omega_0+K_0)\mathbb{E}(\mathbb{P}_{\theta_0}(T>\tau|X=x))\nonumber\\
&+&(1+\Omega_0(1+\delta))\mathbb{E}(\mathbb{E}_{\theta_0}(T 1_{\{T>\tau\}}|X=x)).\label{integralfromtau}
\end{eqnarray}
We conclude by using the following claim
\begin{eqnarray}
\underset{n\to\infty}{\lim}\E(\E_{\theta_0}(T1_{\{T>n\}}|X=x))=0\label{prove}
\end{eqnarray}
and that for a positive random variable $T$, and $\tau\geq 1$,
\begin{eqnarray}
\E(\Prob_{\theta_0}(T>\tau|X=x))&\leq&\E(\E_{\theta_0}(T1_{\{T>\tau\}}|X=x)).\nonumber
\end{eqnarray}
We conclude that the integral of the tail is finite and hence we can choose $\tau\geq 1$ large enough, such that equation \eqref{integralfromtau} sum less than $\epsilon/2$ and
\begin{eqnarray}
\E_{\theta_0}\left(\log\frac{f_X(T,\theta_0)}{f_X(T,\theta)}\right)&\leq&\epsilon.
\end{eqnarray}
\textbf{Proof of the claim of equation \eqref{prove}}. 
Define the functions
\begin{eqnarray}
I(n)&=&\int_{[0,1]^d}\mathbb{E}_{\theta_0}(T1_{\{T\leq n\}}|X=x)Q(dx),\nonumber
\end{eqnarray}
and
\begin{eqnarray}
I^c(n)&=&\int_{[0,1]^d}\mathbb{E}_{\theta_0}(T1_{\{T> n\}}|X=x)Q(dx),\nonumber
\end{eqnarray}
and notice that
\begin{eqnarray}
\mathbb{E}_{\theta_0}(T)&=&\int_{[0,1]^d}\mathbb{E}_{\theta_0}(T|X=x)Q(dx)\nonumber\\
&=&I(n)+I^c(n)\nonumber
\end{eqnarray}
for every $n$. Taking the limit of $I(n)$ when $n$ goes to infinity, we have aim to show that
\begin{eqnarray}
\underset{n\to\infty}{\lim}I(n)&=&\int_{x\in[0,1]^d}\mathbb{E}_{\theta_0}(T|X=x)Q(dx).\nonumber
\end{eqnarray}
Notice $\mathbb{E}_{\theta_0}(T1_{\{T\leq n\}}|X=x)\leq\mathbb{E}_{\theta_0}(T|X=x)$, the last expectation is integrable by assumption (A2), hence by dominated convergence,
\begin{eqnarray}
\underset{n\to\infty}{\lim}I(n)&=&\int_{x\in[0,1]^d}\underset{n\to\infty}{\lim}\mathbb{E}_{\theta_0}(T1_{\{T\leq n\}}|X=x)Q(dx).\nonumber
\end{eqnarray}
Notice that the function $f_n(t)=t1_{\{t\leq n\}}$ is such that $f_n\leq f_{n+1}$ and converges point-wise to $f(t)=t$, then by monotone convergence theorem,
\begin{eqnarray}
\underset{n\to\infty}{\lim}I(n)&=&\int_{x\in[0,1]^d}\mathbb{E}_{\theta_0}\left(\underset{n\to\infty}{\lim}T1_{\{T\leq n\}}|X=x\right)Q(dx)=\mathbb{E}_{\theta_0}(T).\nonumber
\end{eqnarray}
Therefore, we conclude
\begin{eqnarray}
\underset{n\to\infty}{\lim}I^c(n)=0.\nonumber
\end{eqnarray}

\item[$\clubsuit$]\textbf{[Fixed design, condition (T1) (i)]}\\
Take $f_x(t,\theta_0)$ as the distribution of times under the true parameter $\theta_0$ and covariate $x$, then
\begin{eqnarray}
K_i(\theta_0,\theta)&=&\mathbb{E}\left(\Upsilon(\theta_0;\theta)\right)\nonumber\\
&=&\int_0^\infty\log\frac{f_x(t,\theta_0)}{f_x(t,\theta)}f_x(t,\theta_0)dt.\nonumber
\end{eqnarray}
which has the same form as equation \eqref{KL}, hence we replicate exactly the same proof of the random design case, but we justify with assumption (A3') instead of assumption (A3). Indeed, notice that the main assumption we are using to conclude the result in the random design case is that $\E_Q(\E_{\theta_0}(T|X))<M<\infty$. Since in the fixed design case we are working with expressions of the form $\E_{\theta_0}(T|x)$ (without integrating over the covariates), the uniformly integrability required in assumption (A3') will be a sufficient condition to replace the latter assumption.  
\item[$\clubsuit$]\textbf{[Fixed design, condition (T1) (ii)]}\\
Using the definition of variance,
\begin{eqnarray}
V_i(\theta_0,\theta)=\mathbb{V}ar_{\theta_0}(\Upsilon(\theta_0,\theta))\leq\mathbb{E}_{\theta_0}(\Upsilon^2(\theta_0,\theta))\nonumber
\end{eqnarray}
By definition 
\begin{eqnarray}
\mathbb{E}_{\theta_0}(\Upsilon^2(\theta_0,\theta))&=&\int_0^\tau\log^2\frac{f_x(t,\theta_0)}{f_x(t,\theta)}f_x(t,\theta_0)dt+
\int_\tau^\infty\log^2\frac{f_x(t,\theta_0)}{f_x(t,\theta)}f_x(t,\theta_0)dt\nonumber\\
&=&I_1+I_2\label{EqVariance1}
\end{eqnarray}
Consider the first integral of equation \eqref{EqVariance1}, replacing the p.d.f. we have
\begin{eqnarray}
I_1&=&\int_0^\tau\left(\log\frac{\Omega_0}{\Omega}+\left|\log\frac{\sigma(Y_{x0}(t))}{\sigma(Y_{x}(t))}\right|+\left|\int_0^{t}\Omega\sigma(Y_{x}(s))-\Omega_0\sigma(Y_{x0}(s))ds\right|\right)^2 f_x(t,\theta_0)dt\nonumber
\end{eqnarray}
Notice that for all $a,b\in\mathbb{R}$ hold that $(a+b)^2\leq 2(a^2+b^2)$, we use this to bound the above quantity by
\begin{eqnarray}
I_1&\leq&\int_0^\tau 3\left(\log^2\frac{\Omega_0}{\Omega}+\left|\log\frac{\sigma(Y_{x0}(t))}{\sigma(Y_{x}(t))}\right|^2+\left|\int_0^{t}\Omega\sigma(Y_{x}(s))-\Omega_0\sigma(Y_{x0}(s))ds\right|^2\right) f_x(t,\theta_0)dt\nonumber\\
&=&\int_0^\tau 3\log^2\frac{\Omega_0}{\Omega}f_x(t,\theta_0)dt+\int_0^\tau 3\left|\log\frac{\sigma(Y_{x0}(t))}{\sigma(Y_{x}(t))}\right|^2 f_x(t,\theta_0)dt\nonumber\\
&+&\int_0^\tau 3\left|\int_0^{t}\Omega\sigma(Y_{x}(s))-\Omega_0\sigma(Y_{x0}(s))ds\right|^2 f_x(t,\theta_0)dt\nonumber\\
&=&I_{1,1}+I_{1,2}+I_{1,3}.\label{EqVariance2}
\end{eqnarray}
Take the first integral of equation \eqref{EqVariance2}. Since $\theta\in B_{\delta,\tau}$ and $\delta\in(0,1/2)$ we have
\begin{eqnarray}
I_{1,1}&\leq&\int_0^\tau3\left(\frac{\Omega_0}{\Omega}-1\right)^2f_x(t,\theta_0)dt\leq12\delta,\nonumber
\end{eqnarray}
since for $\Omega\in B_{\delta,\tau}$, $\frac{\Omega_0}{\Omega}-1\leq\frac{\delta}{1-\delta}\leq 2\delta$. Next, consider the second integral of equation \eqref{EqVariance2}. By equation \eqref{eq :sulogpsigma} and since $\tau\geq 1$ 
\begin{eqnarray}
I_{1,2}&\leq&\int_0^\tau3\left(\frac{d+1}{1+\tau}\delta\right)^2f_x(t,\theta_0)dt\leq\frac{3(d+1)^2}{2}\delta\nonumber
\end{eqnarray}
Lastly, we take the last integral of equation \eqref{EqVariance2}, using $(a+b)^2\leq 2(a^2+b^2)$,
\begin{eqnarray}
I_{1,3}&=&\int_0^\tau 3\left|\int_0^{t}\Omega\sigma(Y_{x}(s))-\Omega_0\sigma(Y_{x}(s))+\Omega_0\sigma(Y_{x}(s))-\Omega_0\sigma(Y_{x0}(s))ds\right|^2 f_x(t,\theta_0)dt\nonumber\\
&\leq&\int_0^\tau6\left|\int_0^{t}\Omega\sigma(Y_{x}(s))-\Omega_0\sigma(Y_{x}(s))ds\right|^2f_x(t,\theta_0)dt\nonumber\\
&+&\int_0^\tau6\left|\int_0^{t}\Omega_0\sigma(Y_{x}(s))-\Omega_0\sigma(Y_{x0}(s))ds\right|^2f_x(t,\theta_0)dt\nonumber\\ 
&=&I_{1,3,1}+I_{1,3,2}\nonumber
\end{eqnarray}
Given that $\theta\in B_{\delta,\tau}$, we have that $|\Omega-\Omega_0|\leq\Omega_0\delta$. Moreover, recall that $\int_0^t\sigma(Y_x(s))ds\leq t$ with probability one for all $x\in[0,1]^d$, hence
\begin{eqnarray}
I_{1,3,1}&\leq&K_0\delta\int_0^\tau t^2 f_x(t,\theta_0)dt\leq K_0\delta\mathbb{E}_{\theta_0}(T^2|x)\label{Th3 cond3}
\end{eqnarray}
for some constant $K_0$.\\ 

For the second term we have, by equation \eqref{eq :supsigma},
\begin{eqnarray}
I_{1,3,2}&\leq&\int_0^\tau 6\Omega_0^2\left(\int_0^t\left|\sigma(Y_{x}(s))-\sigma(Y_{x0}(s))\right|ds
\right)^2f_x(t,\theta_0)dt\nonumber\\
&\leq&\int_0^\tau 6\Omega_0^2\left(\int_0^t\frac{d+1}{1+\tau}\delta ds
\right)^2f_x(t,\theta_0)dt\nonumber\\
&\leq&K_0'\delta\nonumber
\end{eqnarray}
for some constant $K_0'$.\\

Putting everything together, and using assumption (A3') we have
\begin{eqnarray}
\int_0^\tau\log^2\frac{f_x(t,\theta_0)}{f_x(t,\theta)}f_x(t,\theta_0)dt&=&\delta\left(12+\frac{3(d+1)^2}{2}+K_0\E_{\theta_0}(T^2|x)+K_0'\right)<\infty\nonumber
\end{eqnarray}
Now, we take the second integral of equation \eqref{EqVariance1}
\begin{eqnarray}
I_2&\leq&\int_\tau^\infty\left(\left|\log f_x(t,\theta_0)\right|+\left|\log f_x(t,\theta)\right|\right)^2 f_x(t,\theta_0)dt\nonumber\\
&\leq&\int_\tau^\infty2\left|\log f_x(t,\theta_0)\right|^2 f_x(t,\theta_0)dt+\int_\tau^\infty2\left|\log f_x(t,\theta)\right|^2 f_x(t,\theta_0)dt\nonumber\\
&=&I_{2,1}+I_{2,2}\label{EqVariance3}
\end{eqnarray}
Checking the first integral of equation \eqref{EqVariance3} we have,
\begin{eqnarray}
I_{2,1}&\leq&\int_\tau^\infty2\left|f_x(t,\theta_0)-1\right|^2f_x(t,\theta_0)dt\nonumber\\
&\leq&\int_\tau^\infty2f_x(t,\theta_0)^3dt-\int_\tau^\infty4f_x(t,\theta_0)^2dt+2\nonumber\\
&\leq&\int_\tau^\infty2f_x(t,\theta_0)^3dt+2\nonumber
\end{eqnarray} 
Recall that $f_x(t,\theta_0)$ is bounded by $\Omega_0$ for all $x\in[0,1]^d$, hence 
\begin{eqnarray}
I_{2,1}&\leq&\int_\tau^\infty2\Omega_0^2f_x(t,\theta_0)dt+2\leq2\Omega_0^2+2<\infty\nonumber
\end{eqnarray}
For the second integral, we have
\begin{eqnarray}
I_{2,2}&\leq&\int_\tau^\infty2|\log \Omega\sigma(Y_x(t))e^{-\int_0^t\Omega\sigma(Y_x(s))ds}|^2f_x(t,\theta_0)dt\nonumber\\
&\leq&\int_\tau^\infty 6|\log\Omega|^2 f_x(t,\theta_0)dt+\int_\tau^\infty 6|\log\sigma(Y_x(t))|^2f_x(t,\theta_0)dt\nonumber\\
&+&\int_\tau^\infty 6\left(\int_0^t\Omega\sigma(Y_x(s))ds\right)^2 f_x(t,\theta_0)dt\label{EqVariance4}
\end{eqnarray}
Recall that for $\theta\in B_{\delta,\tau}$ we have $\Omega_0(1-\delta)\leq\Omega\leq\Omega_0(1+\delta)$, hence the first integral of equation \eqref{EqVariance4} is finite. Using the same argument as before, equation \eqref{weird condition}, for $\theta\in B_{\delta,\tau}$ we have $-t<\log\sigma(Y_x(t))\leq 0$ for all $x\in[0,1]^d$. Hence, the second integral of equation \eqref{EqVariance4}
\begin{eqnarray}
\int_\tau^\infty 6\log^2\sigma(Y_x(t))f_x(t,\theta_0)dt&\leq&6\mathbb{E}_{\theta_0}(T^2|x)\label{Th3 cond4}
\end{eqnarray}
Lastly, we bound the last integral of equation \eqref{EqVariance4} by,
\begin{eqnarray}
\int_\tau^\infty 6\left(\int_0^t\Omega\sigma(Y_x(s))ds\right)^2 f_x(t,\theta_0)dt&\leq&6K_2\mathbb{E}_{\theta_0}(T^2|x)\label{Th3 cond5}
\end{eqnarray}
where $K_2$ is some constant that depends on $\Omega_0$. Finally, by the uniformly integrability of $T^2$ over $x\in[0,1]^d$, assumption $(A3')$
\begin{eqnarray}
V_i(\theta_0,\theta)&<&\infty
\end{eqnarray}
uniformly in $i$.
\end{itemize}
\end{proof}

\begin{proof}[Proof of Lemma \ref{Lemma: separable}]
Define the set
\begin{eqnarray}
D_{0}^n=\left\{g:\mathbb{R}^{+}\to\mathbb{R}: g(t)=
\left\{\begin{array}{cc}
g(t)&t\in[0,n]\\
\frac{g(n)}{t+1-n}&t\geq n
\end{array}\right. \right\},\nonumber
\end{eqnarray}
where $g(x)\in\mathcal{C}[0,n]$, the set of continuous functions on the compact $[0,n]$. Furthermore, define $D_0$ as the countable union of these sets,
\begin{eqnarray}
D_0&=&\bigcup_{n\geq 1}D_{0}^n.\nonumber
\end{eqnarray}
Take $f\in \mathcal{C}_0$. By definition, given $\delta>0$, there exits $T>0$ such that for all $t\geq T$,
\begin{eqnarray}
\underset{t\geq T}{\sup}|f(t)|\leq \delta.\nonumber
\end{eqnarray}
Let $\epsilon>0$, and define, 
\begin{eqnarray}
g_N(t)=\left\{\begin{array}{cc}
f(t)&t\in[0,N]\\
\frac{f(N)}{t+1-N}& x\geq N
\end{array}\right.,\nonumber
\end{eqnarray}
where $N$ is any natural number such that for $N>T^{\star}$ and $T^{\star}>0$ we have that
$\underset{t\geq T^\star}{\sup}|f(t)|\leq \frac{\epsilon}{3}$. Notice that by construction $g_N$ belongs to $D_0$,
\begin{eqnarray}
\underset{t\in\mathbb{R}^{+}}{\sup}|f(t)-g_N(t)|&=&\underset{t\geq N}{\sup}|f(t)-g_N(t)|\nonumber\\
&\leq&\underset{t\geq N}{\sup}|f(t)|+|g_N(t)|\leq \frac{2}{3}\epsilon.\nonumber
\end{eqnarray}
In order to complete the argument, we need to prove that $D_0^n$ is separable. This fact along with the fact that a countable union of countable sets is countable completes the proof.\\

It is well known that the space $\mathcal{C}[0,n]$ endowed with the uniform norm is separable. Then we have a countable collections of functions $\{h_m^n\}_{m=1}^\infty$ that approximate any element on $\mathcal{C}[0,n]$. Consider the sequence $\{h_{0,m}^n\}_{m=1}^\infty$ defined as,
\begin{eqnarray}
h_{0,m}^n=\left\{\begin{array}{cc}
h_m^n(t)&t\in[0,n]\\
\frac{h_m^n(n)}{t+1-n}&t\geq n
\end{array}\right..\nonumber
\end{eqnarray}
By construction, for any $g_n\in D_0^n$ and for all $\epsilon>0$ there exists $h^n_{0,m}$ such that,
\begin{eqnarray}
\underset{t\in[0,n]}{\sup}|g_n-h^n_{0,m}|=\underset{t\in[0,n]}{\sup}|g_n-h^n_{m}|<\epsilon.\nonumber
\end{eqnarray}
On the other hand, the latter implies $|g_n(n)-h_{0,m}(n)|<\epsilon$. Hence
\begin{eqnarray}
\underset{t\geq n}{\sup}\left|g_n(t)-h^n_{0,m}(t)\right|=\underset{t\geq n}{\sup}\left|\frac{g_n(n)}{t+1-n}-\frac{h^n_{m}(n)}{t+1-n}\right|\leq|g_n(n)-h_{0,m}(n)|<\epsilon.\nonumber
\end{eqnarray}
Therefore
\begin{eqnarray}
\underset{t\in\mathbb{R}^{+}}{\sup}|g_n-h^n_{0,m}|<\epsilon,\nonumber
\end{eqnarray}
concluding that $D_0^n$ is separable.
\end{proof}

\begin{lemma}\label{Lemma:sup ineq 1}Let $\tau>0$ and $\eta(t)$ be a continuous function on the interval $[0,\tau]$, then for $t^n_k=\frac{k\tau}{2^n}$, with $k\in\{0,\ldots,2^n\}$ and $n\geq 1$
\begin{eqnarray}
\underset{t\in[0,\tau]}{\sup}|\eta_t|\leq|\eta(0)|+|\eta(\tau)|+\sum_{n=1}^\infty\underset{0\leq k\leq 2^n-1}{\max}|\eta(t^n_{k+1})-\eta(t^n_k)|.\nonumber
\end{eqnarray}
\end{lemma}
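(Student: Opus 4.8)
The plan is to pass from the supremum over the continuum $[0,\tau]$ to a supremum over the dyadic mesh points, which is legitimate since $\eta$ is continuous and $\bigcup_{n\geq 0}\{t^n_k:0\leq k\leq 2^n\}$ is dense in $[0,\tau]$, and then to bound $|\eta(t^n_k)|$ uniformly in $k$ by induction on the refinement level $n$.

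First I would set up notation: for $n\geq 0$ let $D_n=\{t^n_k:0\leq k\leq 2^n\}$, so that $D_0=\{0,\tau\}$ and $D_n\subseteq D_{n+1}$ because $t^n_k=t^{n+1}_{2k}$; write $M_n=\max_{0\leq k\leq 2^n-1}|\eta(t^n_{k+1})-\eta(t^n_k)|$ for $n\geq 1$ and $a=|\eta(0)|$, $b=|\eta(\tau)|$. The statement I would establish by induction on $n$ is
\[
\max_{0\leq k\leq 2^n}|\eta(t^n_k)|\;\leq\;\max\{a,b\}+\sum_{m=1}^{n}M_m .
\]
The base case $n=0$ is immediate because $D_0=\{0,\tau\}$ and the sum is empty. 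For the inductive step, fix a mesh point $t^{n+1}_k\in D_{n+1}$. If $k=2j$ is even, then $t^{n+1}_k=t^n_j\in D_n$ and the inductive hypothesis gives the bound directly (the right-hand side only increases with $n$). If $k=2j+1$ is odd, then $t^{n+1}_k$ lies between the adjacent level-$n$ points $t^n_j=t^{n+1}_{2j}$ and $t^n_{j+1}=t^{n+1}_{2j+2}$, hence
\[
|\eta(t^{n+1}_{2j+1})|\;\leq\;|\eta(t^n_j)|+|\eta(t^{n+1}_{2j+1})-\eta(t^{n+1}_{2j})|\;\leq\;|\eta(t^n_j)|+M_{n+1},
\]
and applying the inductive hypothesis to $t^n_j$ finishes the step.

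Letting $n\to\infty$, the right-hand side increases to $\max\{a,b\}+\sum_{m\geq 1}M_m$, which is at most $a+b+\sum_{m\geq 1}M_m$ since $\max\{a,b\}\leq a+b$ for nonnegative $a,b$; and if the series diverges the asserted inequality is trivially true, so no convergence hypothesis on $\sum_n M_n$ is required. Finally, because $\bigcup_{n\geq 0}D_n$ is dense in $[0,\tau]$ and $|\eta|$ is continuous, $\sup_{t\in[0,\tau]}|\eta(t)|=\sup_{n\geq 0}\max_{0\leq k\leq 2^n}|\eta(t^n_k)|$, and combining this with the uniform bound above yields exactly the claim of the lemma.

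The argument is routine; the only place demanding care is the bookkeeping in the inductive step, namely the observation that every point newly appearing at level $n+1$ is the midpoint of two consecutive level-$n$ points, so that the increment from it to one of those neighbours is indeed controlled by $M_{n+1}$ rather than by an increment at a coarser level. Verifying the density/continuity reduction and the monotonicity of the partial sums in $n$ is then immediate.
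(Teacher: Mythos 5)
Your proof is correct and follows essentially the same route as the paper: an induction over the dyadic refinement levels, bounding each newly added midpoint by the triangle inequality through an adjacent coarser-level point, and then passing to the full supremum on $[0,\tau]$ via continuity and density of the dyadic points. The only (harmless) differences are that you keep the slightly sharper $\max\{|\eta(0)|,|\eta(\tau)|\}$ in place of the paper's $|\eta(0)|+|\eta(\tau)|$ and note explicitly that divergence of the series makes the bound trivial.
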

\begin{proof}
Define a partition of the interval $[0,\tau]$ as $A_n=\{t^n_k=\frac{k\tau}{2^n},k=0,\ldots,2^n\}$.
Let $A_0=\{0,\tau\}$, then
\begin{eqnarray}
\underset{t\in A_0}{\sup}|\eta(t)|&\leq&|\eta(0)|+|\eta(\tau)|.\nonumber
\end{eqnarray}
For $A_1=\{0,\tau/2,\tau\}$, we want to show that
\begin{eqnarray}
\underset{t\in A_1}{\sup}|\eta(t)|&\leq&|\eta(0)|+|\eta(\tau)|+\max\{|\eta(\tau)-\eta(\tau/2)|,|\eta(\tau/2)-\eta(0)|\}.\nonumber
\end{eqnarray}
If the supremum is at $\{0,\tau\}$ we are done. Assume the supremum is at $\tau/2$, and let $x\in\{0,\tau\}$
{\footnotesize{
\begin{eqnarray}
|\eta(\tau/2)|=|\eta(\tau/2)-\eta(x)+\eta(x)|\leq|\eta(\tau/2)-\eta(x)|+|\eta(x)|\leq\max\{|\eta(\tau)-\eta(\tau/2)|,|\eta(\tau/2)-\eta(0)|\}+|\eta(x)|,\nonumber
\end{eqnarray}
}}
then the result follows. By induction, assume this holds for the partition $A_n$, so that
\begin{eqnarray}
\underset{t\in A_n}{\sup}|\eta(t)|\leq|\eta(0)|+|\eta(\tau)|+\sum_{i=1}^n\underset{0\leq k\leq 2^i-1}{\max}|\eta(t^i_{k+1})-\eta(t^i_k)|,\nonumber
\end{eqnarray}
then we need to prove this result for $A_{n+1}$. Explicitly, we want to show that
\begin{eqnarray}
\underset{t\in A_{n+1}}{\sup}|\eta(t)|&\leq&|\eta(0)|+|\eta(\tau)|+\sum_{i=1}^{n+1}\underset{0\leq k\leq 2^i-1}{\max}|\eta(t^i_{k+1})-\eta(t^i_k)|,\nonumber
\end{eqnarray}
i.e.
{\footnotesize{
\begin{eqnarray}
\max\left\{\underset{t\in A_{n}}{\sup}|\eta(t)|,\underset{t\in A_{n+1}/A_n}{\sup}|\eta(t)|\right\}&\leq&|\eta(0)|+|\eta(\tau)|+\sum_{i=1}^{n}\underset{0\leq k\leq 2^i-1}{\max}|\eta(t^i_{k+1})-\eta(t^i_k)|+\underset{0\leq k\leq 2^{n+1}-1}{\max}|\eta(t^{n+1}_{k+1})-\eta(t^{n+1}_k)|.\nonumber
\end{eqnarray}
}}
If the supremum is at $A_n$ we are done. Assume the supremum is at $A_{n+1}/A_n=\left\{\frac{(2k+1)\tau}{2^{n+1}}, k=0,\ldots,2^{n}-1\right\}$. Let the supremum be at $\frac{(2m+1)\tau}{2^{n+1}}$ and let be $x\in\left\{\frac{2m\tau}{2^{n+1}},\frac{(2m+2)\tau}{2^{n+1}}\right\}\subset A_n$ with $m\in\{0,\ldots,2^{n}-1\}$ then,
\begin{eqnarray}
\left|\eta\left(\frac{(2m+1)\tau}{2^{n+1}}\right)\right|&\leq& \left|\eta\left(\frac{(2m+1)\tau}{2^{n+1}}\right)-\eta(x)\right|+|\eta(x)|\nonumber\\
&\leq&\underset{0\leq k\leq 2^{n+1}-1}{\max}|\eta(t^{n+1}_{k+1})-\eta(t^{n+1}_k)|+|\eta(0)|+|\eta(\tau)|+\sum_{i=1}^{n}\underset{0\leq k\leq 2^i-1}{\max}|\eta(t^i_{k+1})-\eta(t^i_k)|.\nonumber
\end{eqnarray} 
Finally, since $\eta$ is continuous and $[0,\tau]$ is compact, there exits a $x\in[0,\tau]$ such that $\underset{t\in[0,\tau]}{\sup}|\eta(t)|=\eta(x)$. Then, for all $\epsilon>0$ it exits $n$ and $k\in\{0,\ldots,2^n\}$ such that
\begin{eqnarray}
\eta(x) \leq \eta\left(\frac{k\tau}{2^n}\right)+\epsilon,\nonumber
\end{eqnarray}
since
\begin{eqnarray}
\eta\left(\frac{k\tau}{2^n}\right)\leq|\eta(0)|+|\eta(\tau)|+\sum_{i=1}^\infty\underset{0\leq k\leq 2^i-1}{\max}|\eta(t^i_{k+1})-\eta(t^i_k)|,\nonumber
\end{eqnarray}
we obtained the desired conclusion.
\end{proof}

\begin{theorem}[Gaussian Correlation Inequality \cite{memarian2013gaussian}]\label{GaussianCorrelation}
For any $n\geq1$, if $\mu$ is a mean zero Gaussian measure on $\mathbb{R}^n$, then for $K,M$ convex closed subsets of $\mathbb{R}^n$ symmetric around the origin, we have,
\begin{eqnarray}
\mu(K\cap M)&\geq&\mu(K)\mu(M)\nonumber
\end{eqnarray}
\end{theorem}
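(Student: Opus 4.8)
The plan is to reduce the inequality to a finite-dimensional coordinatewise statement and then prove that statement by interpolating the cross-covariances, following the route of Royen.

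\textbf{Step 1 (reduction to slabs).} Every closed convex set symmetric about the origin is the intersection of the symmetric slabs $\{x:|\langle u,x\rangle|\le c\}$ that contain it; writing $K$ and $M$ this way and approximating each by finitely many slabs (a monotone-limit argument, using that $\mu$ is Radon), it suffices to treat $K=\bigcap_{i=1}^{p}\{|\langle u_i,x\rangle|\le c_i\}$ and $M=\bigcap_{j=1}^{q}\{|\langle v_j,x\rangle|\le d_j\}$. Passing to the centered Gaussian vector $Y=(\langle u_1,X\rangle,\dots,\langle u_p,X\rangle,\langle v_1,X\rangle,\dots,\langle v_q,X\rangle)\in\mathbb{R}^{p+q}$, with blocks $I=\{1,\dots,p\}$, $J=\{p+1,\dots,p+q\}$ and thresholds $(t_k)$, the claim becomes
\[ \Prob\big(|Y_k|\le t_k\ \forall k\big)\ \ge\ \Prob\big(|Y_k|\le t_k\ \forall k\in I\big)\,\Prob\big(|Y_k|\le t_k\ \forall k\in J\big). \]

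\textbf{Step 2 (interpolation).} Let $\Sigma$ be the covariance of $Y$, let $D$ be its block-diagonal part (the two blocks being $I$ and $J$), and set $\Sigma(\rho)=(1-\rho)D+\rho\Sigma$ for $\rho\in[0,1]$, a convex combination of positive semidefinite matrices, hence admissible. Let $Y^{(\rho)}\sim\mathcal{N}(0,\Sigma(\rho))$ and $g(\rho)=\Prob\big(|Y^{(\rho)}_k|\le t_k\ \forall k\big)$. Then $g(0)$ is exactly the right-hand side above, since the $I$- and $J$-blocks of $Y^{(0)}$ are independent, while $g(1)$ is the left-hand side; so it is enough to prove that $g$ is nondecreasing on $[0,1]$.

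\textbf{Step 3 (monotonicity; the hard part).} Passing to squared coordinates, the vector $\big((Y^{(\rho)}_k)^2\big)_{k}$ has a multivariate gamma distribution with one degree of freedom, whose joint tail function admits an integral representation of Laplace type in which $\rho$ enters only through a factor $\det(I-A(\rho))^{-1/2}$ whose $I$--$J$ off-diagonal entries carry $\rho$. Differentiating $g$ in $\rho$ under this integral and collecting terms, the desired inequality $g'(\rho)\ge 0$ reduces to the assertion that a fixed, explicit function of the integration variables is the Laplace transform of a nonnegative measure; this is Royen's lemma on multivariate gamma distributions (equivalently its streamlined reformulation by Lata{\l}a and Matlak). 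This sign computation is the entire substance of the theorem and is the step I expect to be the main obstacle; the reductions of Steps 1--2 and the final unwinding (passing from finitely many slabs to general symmetric closed convex $K,M$ by monotone limits, and transferring the inequality on $\mathbb{R}^{p+q}$ back to $\mu$ on $\mathbb{R}^n$ via the linear map $X\mapsto Y$) are routine bookkeeping. I also note that the classical special cases --- $n=2$, or one of $K,M$ a single slab (the Khatri--{\v S}id{\'a}k inequality) --- do not cover all the uses made of this inequality here, so the general statement is genuinely needed and is recorded in full rather than as a convenient corollary.
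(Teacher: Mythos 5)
There is no proof of this statement in the paper for your attempt to be measured against: Theorem \ref{GaussianCorrelation} is imported as an external result, cited to Memarian's preprint (the Gaussian correlation inequality, established in the literature by Royen), and is then used as a black box in Lemma \ref{Lemma: Gaussian correlation inequality for GP} and Lemma \ref{Lemma: Positivity [0,tau]}. So the relevant question is whether your proposal stands on its own as a proof, and it does not quite: Steps 1--2 (representing a symmetric closed convex set as a countable intersection of symmetric slabs, passing to the Gaussian vector of the associated linear functionals, and interpolating $\Sigma(\rho)=(1-\rho)D+\rho\Sigma$ between the block-diagonal and full covariance) are correct and are indeed the standard reductions, but Step 3 --- the assertion that $g'(\rho)\ge 0$, obtained from the multivariate gamma representation and the positivity of a certain Laplace-transform density --- is exactly where the theorem lives, and you do not carry it out; you cite Royen's lemma (or the Lata{\l}a--Matlak streamlining) for it. As you candidly note yourself, that sign computation is ``the entire substance of the theorem,'' so what you have written is a reduction of the Gaussian correlation inequality to Royen's gamma-distribution lemma, not a proof of the inequality. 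That is a genuine gap if the goal was a self-contained argument; if the goal was to justify the statement at the level of rigor the paper itself adopts, the honest move is the one the paper makes --- state the theorem with a citation --- and your Steps 1--2 then add accurate but not strictly necessary context. One further small point: your closing remark is apt, since the paper applies the inequality to two blocks of unbounded dimension arising from dyadic partitions, so neither the two-dimensional case nor the Khatri--\v{S}id\'ak one-slab case would suffice here; the full statement is indeed what is needed.
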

\begin{lemma}\label{Lemma: Gaussian correlation inequality for GP}
Let $(l(t))_{t\geq 0}$ denote a Gaussian Process with  zero mean, stationary covariance function $\kappa$ and continuous sample paths, then
\begin{eqnarray}
\Prob\left(\underset{t\in[0,\tau]}{\sup}|l(t)|\leq K_1,\underset{t\in(\tau,\tau_2]}{\sup}|l(t)|\leq K_2\right)&\geq&\Prob\left(\underset{t\in[0,\tau]}{\sup}|l(t)|\leq K_1\right)\Prob\left(\underset{t\in(\tau,\tau_2]}{\sup}|l(t)|\leq K_2\right)
\end{eqnarray}
for $\tau\leq\tau_2$ and $K_1, K_2$ constants.
\end{lemma}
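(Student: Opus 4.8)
The plan is to reduce the statement to the finite-dimensional Gaussian Correlation Inequality (Theorem \ref{GaussianCorrelation}) by a routine approximation through finite-dimensional distributions, using the continuity of the sample paths of $l$.

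First I would fix countable dense subsets $D_1=\{t_1,t_2,\ldots\}\subseteq[0,\tau]$ and $D_2=\{s_1,s_2,\ldots\}\subseteq(\tau,\tau_2]$. Since $l$ has continuous sample paths, almost surely $\sup_{t\in[0,\tau]}|l(t)|=\sup_{k\geq1}|l(t_k)|$ and $\sup_{t\in(\tau,\tau_2]}|l(t)|=\sup_{k\geq1}|l(s_k)|$ (for any $t$ in the interval and any $\varepsilon>0$ there is a point of the dense set within $\varepsilon$ in $|l|$). This lets me replace each supremum over an interval appearing in the three probabilities by a supremum over the corresponding countable set, and in particular guarantees the measurability of all events involved.

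Next, for fixed $n,m$, consider the mean-zero Gaussian vector $Z=(l(t_1),\ldots,l(t_n),l(s_1),\ldots,l(s_m))\in\mathbb{R}^{n+m}$. The event $\{\max_{i\leq n}|l(t_i)|\leq K_1\}$ equals $\{Z\in K\}$ with $K=\{x\in\mathbb{R}^{n+m}:|x_i|\leq K_1,\ i\leq n\}$, a finite intersection of slabs, hence closed, convex and symmetric about the origin; likewise $\{\max_{j\leq m}|l(s_j)|\leq K_2\}=\{Z\in M\}$ with $M=\{x\in\mathbb{R}^{n+m}:|x_{n+j}|\leq K_2,\ j\leq m\}$ closed, convex and symmetric. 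Applying Theorem \ref{GaussianCorrelation} to the law of $Z$ gives
\begin{eqnarray}
\Prob\left(\max_{i\leq n}|l(t_i)|\leq K_1,\ \max_{j\leq m}|l(s_j)|\leq K_2\right)&\geq&\Prob\left(\max_{i\leq n}|l(t_i)|\leq K_1\right)\Prob\left(\max_{j\leq m}|l(s_j)|\leq K_2\right).\nonumber
\end{eqnarray}

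Finally I would let $n,m\to\infty$. Each of the three events is nonincreasing in $n$ and in $m$, with intersection over all $n$ and $m$ equal to $\{\sup_{k}|l(t_k)|\leq K_1\}$, $\{\sup_k|l(s_k)|\leq K_2\}$, and their conjunction, respectively; continuity from above of the probability measure passes the displayed inequality to the limit, and the identification of the countable suprema with the interval suprema from the first step yields the claim (the inequality is trivial if either factor on the right is $0$). I do not expect a real obstacle: the only points needing care are checking that the finite-dimensional sets are closed, convex and symmetric — immediate, since they are coordinate slabs — and the measurability of the interval suprema, which follows from path continuity and the separability of $[0,\tau]$ and $(\tau,\tau_2]$; stationarity of $\kappa$ is not actually used here.
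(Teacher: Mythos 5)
Your proof is correct and follows essentially the same route as the paper: both reduce to the finite-dimensional Gaussian correlation inequality applied to coordinate slabs (closed, convex, symmetric) for the process evaluated at finitely many points, and then pass to the interval suprema via path continuity and continuity from above of the measure. The only cosmetic difference is that you use arbitrary countable dense subsets of $[0,\tau]$ and $(\tau,\tau_2]$ with a double index, whereas the paper uses nested dyadic partitions of $[0,\tau_2]$ with a single index.
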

\begin{remark}
Moreover, the same result holds for $\tau_2$ equal to infinity.
\end{remark}
\begin{proof}

Consider the finite collection of indices  $A_n=\left\{t_k=\frac{k\tau_2}{2^n},k=0,\ldots,2^n\right\}$, then by definition of a Gaussian process, $l(A_n)$ is a zero mean (since $(l(t))_{t\geq 0}$ has zero mean) multivariate Gaussian random variable in $\mathbb{R}^{|A_n|}$.  Moreover, let $t_{k^\star}$ the largest element in $A_n$ such that $t_{k^\star}\leq\tau$, then the sets
\begin{eqnarray}
K&=&\left\{|l(t_0)|\leq K_1,\ldots,|l(t_{k^\star})|\leq K_1,l(t_{k^\star+1})\in\mathbb{R},\ldots,l(\tau_2)\in\mathbb{R}\right\}\nonumber\\
M&=&\left\{l(t_0)\in\mathbb{R},\ldots,l(t_{k^\star})\in \mathbb{R},|l(t_{k^\star+1})|\leq K_2,\ldots,|l(\tau_2)|\leq K_2\right\},
\end{eqnarray}
are convex closed subsets of $\mathbb{R}^{|A_n|}$. Therefore, we can apply the Gaussian correlation inequality, Theorem \ref{GaussianCorrelation}, and have that
\begin{eqnarray}
\Prob\left(K\cap M\right)&\geq&\Prob\left(K\right)\Prob\left(M\right),\nonumber
\end{eqnarray}
which can be rewritten as
{\footnotesize{
\begin{eqnarray}
\Prob\left(\underset{t\in[0,\tau]\cap A_n}{\sup}|l(t)|\leq K_1,\underset{t\in(\tau,\tau_2]\cap A_n}{\sup}|l(t)|\leq K_2\right)\
&\geq&\Prob\left(\underset{t\in[0,\tau]\cap A_n}{\sup}|l(t)|\leq K_1\right)\Prob\left(\underset{t\in(\tau,\tau_2]\cap A_n}{\sup}|l(t)|\leq K_2\right).\label{eq:takelimit}
\end{eqnarray}}}
Moreover, notice that by construction $A_{n}\subset A_{n+1}$ and then
{\footnotesize{
\begin{eqnarray}
\left\{\underset{t\in[0,\tau]\cap A_{n+1}}{\sup}|l(t)|\leq K_1,\underset{t\in(\tau,\tau_2]\cap A_{n+1}}{\sup}|l(t)|\leq K_2\right\}&\subseteq&\left\{\underset{t\in[0,\tau]\cap A_n}{\sup}|l(t)|\leq K_1,\underset{t\in(\tau,\tau_2]\cap A_n}{\sup}|l(t)|\leq K_2\right\},\nonumber
\end{eqnarray}
}}
and then, by a standard properties of measures
{\footnotesize{
\begin{eqnarray}
\underset{n\to\infty}{\lim}\Prob\left(\underset{t\in[0,\tau]\cap A_n}{\sup}|l(t)|\leq K_1,\underset{t\in(\tau,\tau_2]\cap A_n}{\sup}|l(t)|\leq K_2\right)&=&\Prob\left(\underset{n\in\mathbb{N}}{\bigcap}\left\{\underset{t\in[0,\tau]\cap A_n}{\sup}|l(t)|\leq K_1,\underset{t\in(\tau,\tau_2]\cap A_n}{\sup}|l(t)|\leq K_2\right\}\right)\nonumber\\
\text{(by a.s. continuity of $(l(t))_{t\geq 0}$)}&=&\Prob\left(\underset{t\in[0,\tau]}{\sup}|l(t)|\leq K_1,\underset{t\in(\tau,\tau_2]}{\sup}|l(t)|\leq K_2\right).\nonumber
\end{eqnarray}
}}
Then, taking limits in both sides of equation \eqref{eq:takelimit}, we get the desired result. By the same arguments, we have the result when $\tau_2$ is infinity.
\end{proof}

\begin{lemma}\label{Lemma: Sup tail greater than M} Let $(\eta(t))_{t\geq 0}$ be a Gaussian process with continuous paths, zero mean and stationary covariance function $\kappa$, satisfying $(\kappa(0)-\kappa(2^{-n}))^{-1}\geq n^{6}$ (Assumption (A1)).  Then there exits some $\tau^\star>1$, such that for all $\tau>\tau^\star$, $M>0$ and $d\in\mathbb{N}$
\begin{eqnarray}
\Pi\left(\underset{t\geq \tau}{\sup}|\eta(t)h_d(t)|\geq M\right)&\leq&p_{d,M,\kappa}(\tau),\nonumber
\end{eqnarray}
with
\begin{eqnarray}
 p_{d,M,\kappa}(\tau)&=&\sum_{j\geq 0}K^{(1)}_{d,M,\kappa}\exp\{-K^{(2)}_{d,M,\kappa}(\tau+j)^2\},\label{prob pdMkappa}
\end{eqnarray}
where $K^{(1)}_{d,M,\kappa}$ and $K^{(2)}_{d,M,\kappa}$ are positive constants that depend on the dimension $d$, constant $M$ and kernel $\kappa$. Furthermore, $ p_{d,M,\kappa}(\tau)$ goes to zero as $\tau$ goes to infinity.
\end{lemma}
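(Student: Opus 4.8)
The plan is to slice the tail $[\tau,\infty)$ into unit-length blocks, use monotonicity of $h_d$ to replace $\eta h_d$ by a constant times $\eta$ on each block, and then control the supremum of $\eta$ over a unit interval by a Gaussian tail bound whose constants depend only on $\kappa$. Fix a universal $\tau^\star>1$ large enough that $h_d$ is strictly decreasing on $[\tau^\star,\infty)$ and $t+\log(1-e^{-t})\ge t/2$ there (both conditions are free of $d$ and $M$). For $\tau>\tau^\star$, writing $[\tau,\infty)=\bigcup_{j\ge0}[\tau+j,\tau+j+1]$ and using $\sup_{[\tau+j,\tau+j+1]}|\eta(t)h_d(t)|\le h_d(\tau+j)\sup_{[\tau+j,\tau+j+1]}|\eta(t)|$, a union bound over the blocks gives
\[
\Pi\Big(\sup_{t\ge\tau}|\eta(t)h_d(t)|\ge M\Big)\ \le\ \sum_{j\ge0}\Pi\Big(\sup_{t\in[\tau+j,\tau+j+1]}|\eta(t)|\ \ge\ \tfrac{M}{h_d(\tau+j)}\Big),
\]
and since $M/h_d(t)=M(t+\log(1-e^{-t}))/(d+1)\ge \gamma(\tau+j)$ on each block with $\gamma:=M/(2(d+1))$, it suffices to estimate $\Pi(\sup_{[\tau+j,\tau+j+1]}|\eta|\ge\gamma(\tau+j))$.

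By stationarity each such supremum has the law of $S:=\sup_{t\in[0,1]}|\eta(t)|$, so everything reduces to proving a bound $\Pi(S\ge u)\le A_\kappa e^{-B_\kappa u^2}$ valid for \emph{all} $u\ge0$ with $A_\kappa,B_\kappa>0$ depending only on $\kappa$. I would get this from Lemma \ref{Lemma:sup ineq 1} on $[0,1]$ with dyadic points $t^n_k=k/2^n$, which yields $S\le |\eta(0)|+|\eta(1)|+\sum_{n\ge1}\max_{0\le k<2^n}|\eta(t^n_{k+1})-\eta(t^n_k)|$. Here $\eta(0),\eta(1)$ are $N(0,\kappa(0))$, while a level-$n$ increment is centred Gaussian with variance $2(\kappa(0)-\kappa(2^{-n}))\le 2n^{-6}$ by Assumption (A1). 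Allocating the budget $u$ as $u/4$ to each endpoint and $a_n u$ to level $n$ with $a_n=3/(\pi^2 n^2)$ (so $\sum_n a_n=\tfrac12$), the elementary Gaussian tail $\Pi(|Z|>x)\le 2e^{-x^2/(2\sigma^2)}$ and a union bound over the $2^n$ level-$n$ increments give $\Pi(\max_k|\eta(t^n_{k+1})-\eta(t^n_k)|\ge a_n u)\le 2^{n+1}e^{-c\,u^2 n^2}$ for a universal $c>0$ — the decisive point being that $a_n^2 n^6\propto n^2$, so (A1) beats the factor $2^n$ and leaves a super-exponentially decaying series in $n$. Summing, absorbing the endpoint bounds $2e^{-u^2/(32\kappa(0))}$, and using the trivial estimate $\Pi(S\ge u)\le 1$ for $u$ below a $\kappa$-dependent threshold (chosen so that $A_\kappa e^{-B_\kappa u^2}\ge 1$ there) delivers $\Pi(S\ge u)\le A_\kappa e^{-B_\kappa u^2}$ for every $u\ge0$. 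One could instead invoke the Borell--TIS inequality, using $\E S<\infty$ by continuity of the paths on the compact $[0,1]$; then (A1) only serves to make $\E S$ explicit via a Dudley entropy bound.

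Putting the two steps together with $u=\gamma(\tau+j)$ gives, for all $\tau>\tau^\star$,
\[
\Pi\Big(\sup_{t\ge\tau}|\eta(t)h_d(t)|\ge M\Big)\ \le\ \sum_{j\ge0}A_\kappa\exp\!\Big(-B_\kappa\tfrac{M^2}{4(d+1)^2}(\tau+j)^2\Big),
\]
which is exactly $p_{d,M,\kappa}(\tau)$ with $K^{(1)}_{d,M,\kappa}=A_\kappa$ and $K^{(2)}_{d,M,\kappa}=B_\kappa M^2/(4(d+1)^2)$. Finally $(\tau+j)^2\ge\tau^2+2\tau j$ yields $p_{d,M,\kappa}(\tau)\le K^{(1)}_{d,M,\kappa}e^{-K^{(2)}_{d,M,\kappa}\tau^2}/(1-e^{-2K^{(2)}_{d,M,\kappa}\tau})$, which tends to $0$ as $\tau\to\infty$, proving the last assertion.

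I expect the main obstacle to be the unit-interval tail bound $\Pi(S\ge u)\le A_\kappa e^{-B_\kappa u^2}$ with constants independent of $d$ and $M$: one must tune the dyadic budget allocation so that the $n^6$ in (A1) exactly compensates the $2^n$ increments at scale $n$, and one must make the estimate robust for small $u$ (equivalently small $j$), which is precisely what lets $\tau^\star$ be a single universal constant rather than one depending on $M$ and $d$.
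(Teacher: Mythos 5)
Your proof is correct, and its skeleton is the same as the paper's: slice $[\tau,\infty)$ into unit blocks, use monotonicity of $h_d$ to trade $\eta h_d$ for $\eta$ at the cost of $h_d(\tau+j)^{-1}M$, control each block supremum through the dyadic decomposition of Lemma \ref{Lemma:sup ineq 1} with exactly the same $\tfrac14,\tfrac14,\tfrac{3}{\pi^2n^2}$ budget, and let Assumption (A1) turn the level-$n$ variance $2(\kappa(0)-\kappa(2^{-n}))\le 2n^{-6}$ into an exponent that beats the $2^n$ union bound. Where you genuinely depart from the paper is in how the per-block estimate is packaged: you invoke stationarity to reduce every block to the single variable $S=\sup_{[0,1]}|\eta|$, prove one tail bound $\Pi(S\ge u)\le A_\kappa e^{-B_\kappa u^2}$ valid for \emph{all} $u\ge 0$ by absorbing the small-$u$ regime into the constant $A_\kappa$, and only then substitute $u=\gamma(\tau+j)$. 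The paper instead runs the chaining separately on each block and needs the geometric series ratio $\exp\{-9h_d(\tau)^{-2}M^2/(4\pi^4)+\log 2\}$ to be below one, so the $\tau^\star$ it actually produces depends on $M$ and $d$ (this is harmless for the way the lemma is used later, where $\tau$ is re-chosen for each $M=1/i$, but it does not literally deliver the quantifier order in the statement). Your version buys a $\tau^\star$ depending only on $\kappa$-free properties of $h_d$, plus an explicit geometric-domination argument for $p_{d,M,\kappa}(\tau)\to 0$ where the paper appeals to dominated convergence; the paper's version avoids the stationarity reduction and the constant-inflation step, at the price of that hidden dependence. Your aside about Borell--TIS is also a valid shortcut, since path continuity on $[0,1]$ gives $\E S<\infty$.
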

\begin{proof}
Define the sets
\begin{eqnarray}
S&=&\left\{\underset{t\geq \tau}{\sup}|\eta(t)h_d(t)|\geq M\right\}\nonumber\\
S_j&=&\left\{\underset{t\in[\tau+j,\tau+j+1]}{\sup}|\eta(t)|\geq h_d(\tau+j)^{-1}M\right\}\nonumber,
\end{eqnarray}
then, since the function $h_d$ is strictly decreasing for all $t\geq \tau>1$, we have
\begin{eqnarray}
\Pi(S)\leq\Pi\left(\bigcup_{j\geq 0}S_j\right)
\leq\sum_{j\geq 0}\Pi(S_j).\nonumber
\end{eqnarray}
Consider the partition $A_{n}^j=\{ t_{j,n,k}=\tau+j+\frac{k}{2^{n}},k=0,\ldots,2^n\}$. Since the paths of the Gaussian process $(\eta(t))_{t\geq 0}$ are continuous almost surely, by Lemma \ref{Lemma:sup ineq 1} we have
\begin{eqnarray}
\underset{t\in[\tau+j,\tau+j+1]}{\sup}|\eta(t)|\leq |\eta(\tau+j)|+\sum_{n=1}^{\infty}\underset{0\leq k\leq 2^n-1}{\max}|\eta(t_{j,n,k+1})-\eta(t_{j,n,k})|+|\eta(\tau+j+1)|.\nonumber
\end{eqnarray}
Define the set 
\begin{eqnarray}
Z_j&\coloneqq&\left\{|\eta(\tau+j)|\geq \frac{h_d(\tau+j)^{-1}M}{4}\right\}\cup\left\{ \bigcup_{n\geq 1}\left\{\underset{0\leq k\leq 2^{n}-1}{\max}|\eta(t_{j,n,k+1})-\eta(t_{j,n,k})|\geq\frac{3h_d(\tau+j)^{-1}M}{\pi^2n^2}\right\}\right\}\nonumber\\
&\cup&\left\{|\eta(\tau+j+1)|\geq\frac{h_d(\tau+j)^{-1}M}{4}\right\},\nonumber
\end{eqnarray}
then we prove 
\begin{eqnarray}
S_j\subseteq Z_j,\nonumber
\end{eqnarray}
for all $j\in\{0,\ldots,d\}$. We proceed by contradiction. Take a function $\eta$ in $S_j$ and suppose it is not in $Z_j$, then by construction
\begin{eqnarray}
|\eta(\tau+j)|+\sum_{n=1}^{\infty}\underset{0\leq k\leq 2^n-1}{\max}|\eta(t_{j,n,k+1})-\eta(t_{j,n,k})|+|\eta(\tau+j+1)|&<&h_d(\tau+j)^{-1}M,\nonumber
\end{eqnarray}
which contradicts the statement of Lemma 
\ref{Lemma:sup ineq 1}, since for $\eta$ in $S_j$ it holds $\underset{t\in[\tau+j,\tau+j+1]}{\sup}|\eta(t)|\geq h_d(\tau+j)^{-1}M$. Hence,
\begin{eqnarray}
\Pi\left(S_j\right)&\leq&\Pi\left(Z_j\right)\nonumber\\
&\leq&\Pi\left(|\eta(\tau+j)|\geq \frac{h_d(\tau+j)^{-1}M}{4}\right)+\Pi\left(|\eta(\tau+j+1)|\geq\frac{h_d(\tau+j)^{-1}M}{4}\right)\nonumber\\
&+&\sum_{n\geq 1}\sum_{0\leq k\leq 2^{n}-1}\Pi\left(|\eta(t_{j,n,k+1})-\eta(t_{j,n,k})|\geq\frac{3h_d(\tau+j)^{-1}M}{\pi^2n^2}\right).\label{eq: plug-in}
\end{eqnarray}
Recall the Gaussian concentration inequality,
\begin{eqnarray}
\mathbb{P}(|X-\mu|\geq t)\leq 2\exp\left\{-\frac{t^2}{2\sigma^2}\right\},\label{GaussianConcentrationInequality}
\end{eqnarray}
then

\begin{eqnarray}
\Pi\left(|\eta(\tau+j)|\geq \frac{h_d(\tau+j)^{-1}M}{4}\right)\leq 2\exp\left\{\frac{h_d(\tau+j)^{-2}M^2}{32\kappa(0)}\right\}\label{result1}
\end{eqnarray}
and 
\begin{eqnarray}
\Pi\left(|\eta(\tau+j+1)|\geq \frac{h_d(\tau+j)^{-1}M}{4}\right)\leq 2\exp\left\{\frac{h_d(\tau+j)^{-2}M^2}{32\kappa(0)}\right\}.\label{result2}
\end{eqnarray}

Moreover, define the event $I_{j,n,k}=\left\{|\eta(t_{j,n,k+1})-\eta(t_{j,n,k})|\geq\frac{3h_d(\tau+j)^{-1}M}{\pi^2n^2}\right\}$, where the random variable $\eta(t_{j,n,k+1})-\eta(t_{j,n,k+1})$ is Gaussian  with zero mean and variance given by $2(\kappa(0)-\kappa(2^{-n}))$, then using Gaussian concentration inequality, equation \eqref{GaussianConcentrationInequality}
\begin{eqnarray}
\sum_{n\geq 1}\sum_{0\leq k\leq 2^{n}-1}\Pi\left(I_{j,n,k}\right)&\leq&\sum_{n\geq 1}2^{n+1}\exp\left\{-\frac{9h_d(\tau+j)^{-2}M^2}{4\pi^2n^4(\kappa(0)-\kappa(2^{-n}))}\right\}
\nonumber\\
&=&2\sum_{n\geq 1}\exp\left\{-\frac{9h_d(\tau+j)^{-2}M^2(\kappa(0)-\kappa(2^{-n}))^{-1}}{4\pi^4n^4}+n\log 2\right\}.\nonumber
\end{eqnarray}
Since $(\kappa(0)-\kappa(2^{-n}))^{-1}\geq n^6$, assumption (A1)
\begin{eqnarray}
\sum_{n\geq 1}\sum_{0\leq k\leq 2^{n}-1}\Pi\left(I_{j,n,k}\right)&\leq&2\sum_{n\geq 1}\exp\left\{-\frac{9h_d(\tau+j)^{-2}M^2n}{4\pi^4}+n\log 2\right\}\nonumber\\
&=&2\sum_{n\geq 1}\exp\left\{-C_j\right\}^n
,\label{convergent series}
\end{eqnarray}
where $C_j=\frac{9h_d(\tau+j)^{-2}M^2}{4\pi^4}-\log{2}$. Since $h_d(t)$ is strictly decreasing for $t\geq 1$, we can always pick a $\tau^{\star}>1$ in such way that for all $\tau\geq\tau^\star$,
\begin{eqnarray}
\exp\{-C_0\}=\exp\left\{-\frac{9h_d(\tau)^{-2}M^2}{4\pi^4}+\log{2}\right\}<1.
\end{eqnarray}
A direct consequence of the latter result, is the convergence of the geometric series in equation \eqref{convergent series}
\begin{eqnarray}
\eqref{convergent series}&=&2\frac{\exp\{-C_j\}}{1-\exp\{-C_j\}}\leq 2\frac{\exp\{-C_j\}}{1-\exp\{-C_0\}}\leq\tilde{K_0}\exp\{-C_j\},\label{result3}
\end{eqnarray}
for all $j\in\mathbb{N}$ and some constant $\tilde{K_0}$ (which depends on $d$ and $M$). Plugging-in the result of equations \eqref{result1}, \eqref{result2} and \eqref{result3} in equation \eqref{eq: plug-in}, we have
\begin{eqnarray}
\Pi\left(S_j\right)&\leq&4\exp\left\{-\frac{h_d(\tau+j)^{-2}M^2}{32\kappa(0)}\right\}+\tilde{K}_0\exp\{-C_j\}.\nonumber
\end{eqnarray}
Finally,
\begin{eqnarray}
\Pi(S)\leq\sum_{j\geq 0}\Pi\left(S_j\right)\nonumber
\leq\sum_{j\geq 0}\left(4\exp\left\{-\frac{h_d(\tau+j)^{-2}M^2}{32\kappa(0)}\right\}+\tilde{K}_0\exp\left\{-\frac{9h_d(\tau+j)^{-2}M^2}{4\pi^4}-\log{2}\right\}\right),\nonumber
\end{eqnarray}
with
\begin{eqnarray}
h_d(t)^{-2}&=&\frac{(t+\log(1-e^{-t}))^2}{(d+1)^2}.\nonumber
\end{eqnarray}
Therefore there exists some positive constants $K^{(1)}_{d,M,\kappa}$ and $K^{(2)}_{d,M,\kappa}$
\begin{eqnarray}
\Pi(S)&\leq&\sum_{j\geq 0}K^{(1)}_{d,M,\kappa}\exp\{-K^{(2)}_{d,M,\kappa}(\tau+j)^2\}.
\end{eqnarray}
From an standard application of dominated convergence, we deduce that the limit goes to zero as $\tau$ goes to infinity for fixed $M$ and $d$. 
\end{proof}
\begin{proof}[Proof of Lemma \ref{Lemma: bounded supremum}]
By definition, $(\eta_j(t))_{t\geq 0}$ is a Gaussian process with continuous sample paths, zero mean and stationary covariance function $k_j$, for all $j\in\{0,\ldots,d\}$. Since $h_d$ is continuous then $(\hat{\eta}_j(t))_{t\geq 0}$ has also continuous sample paths. We proceed to prove
\begin{eqnarray}
\Pi\left(\lim_{\tau\to\infty}\underset{t\geq\tau}{\sup}|\eta_j(t)h_d(t)|=0\right)&=&1,\nonumber
\end{eqnarray}
for all $j\in\{0,\ldots,d\}$.

Under assumption (A1) and Lemma \ref{Lemma: Sup tail greater than M}, there exits $\tau^\star>1$ such that for all $\tau\geq\tau^\star$,
\begin{eqnarray}
\Pi\left(\underset{t\geq\tau}{\sup}|\eta_j(t)h_d(t)|\geq M\right)&\leq&p_{d,M,\kappa_j}(\tau),\nonumber
\end{eqnarray}
for all $j\in\{0,\ldots,d\}$ (take the maximum $\tau^\star$ as we have a finite collection of Gaussian processes) and with $p_{d,M,\kappa_j}(\tau)$ defined in equation \eqref{prob pdMkappa}. In particular, by Lemma \ref{Lemma: Sup tail greater than M}, this function decreases to zero as $\tau$ goes to infinity for any fixed value of $M>0$ and $d\in\mathbb{N}$. 

Using the latter result, there exist a $\tau^\star>1$ such that for all $\tau>\tau^\star$ 
\begin{eqnarray}
\Pi\left(\underset{t\geq\tau}{\sup}|\eta_j(t)h_d(t)|\geq 1\right)&\leq&p_{d,1,\kappa_j}(\tau),\nonumber
\end{eqnarray}
for all $j\in\{0,\ldots,d\}$. Moreover, consider the increasing sequence of times $\tau_{j,1},\tau_{j,2}\ldots$ in the following way. Take $\tau_{j,1}=\tau>\tau^\star$ and choose $\tau_{j,i}$ such that the probability of the event 
\begin{eqnarray}
E_{j,i}&=&\left\{\underset{t\geq\tau_{j,i}}{\sup}|\eta_j(t)h_d(t)|\geq \frac{1}{i}\right\},\nonumber
\end{eqnarray}
is bounded as
\begin{eqnarray}
\Pi\left(E_{j,i}\right)\leq p_{d,\frac{1}{i},\kappa_j}(\tau_{j,i})\leq \frac{p_{d,1,\kappa_j}(\tau)}{i^2},\nonumber
\end{eqnarray} 
this can be done since the function $p_{d,1,\kappa_j}(\tau)$ decreases to zero in $\tau$. Therefore
\begin{eqnarray}
\sum_{i=1}^\infty\Pi\left(E_{j,i}\right)\leq\sum_{i=1}^{\infty}\frac{p_{d,1,\kappa_j}(\tau)}{i^2}=\frac{p_{d,1,\kappa_j}(\tau)\pi^2}{6}<\infty.\nonumber
\end{eqnarray}
By the Borel Cantelli lemma 
\begin{eqnarray}
\limsup_{i\to\infty}\underset{t\geq\tau_{j,i}}{\sup}|\eta_j(t)h_d(t)|\leq\limsup_{i\to\infty}\frac{1}{i}=0.\nonumber
\end{eqnarray}
$\mathbb{P}$-a.s. for all $j\in\{0,\ldots,d\}$.
\end{proof}

\begin{lemma}\label{Lemma: Positivity [0,tau]} Let $(\eta(t))_{t\geq 0}$ be a Gaussian process with continuous sample paths, zero mean and stationary covariance function $\kappa$, satisfying $(\kappa(0)-\kappa(2^{-n}))^{-1}\geq n^{6}$. Let $\tau\geq1$ and $\delta>0$, then
\begin{eqnarray}
\Pi\left(\underset{t\in[0,\tau]}{\sup}|\eta(t)h_d(t)|\leq \frac{\delta h_d(\tau)}{1+\tau}\right)&>&0.\nonumber
\end{eqnarray} 
\end{lemma}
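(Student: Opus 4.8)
The plan is to peel off the weight $h_d$, reduce to a pure small-ball statement for $\eta$ on a compact interval, and then obtain that statement by the same chaining-plus-Gaussian-correlation device used in Lemma \ref{Lemma: Sup tail greater than M}. Since $h_d$ equals the constant $H_d:=h_d(0)=\frac{d+1}{1+\log(1-e^{-1})}$ on $[0,1]$ and is strictly decreasing (so $h_d\le H_d$) afterwards, while $h_d(\tau)>0$ for $\tau\ge 1$, we have $\sup_{t\in[0,\tau]}|\eta(t)h_d(t)|\le H_d\sup_{t\in[0,\tau]}|\eta(t)|$; hence with $c:=\frac{\delta\,h_d(\tau)}{H_d(1+\tau)}>0$ it suffices to show $\Pi\bigl(\sup_{t\in[0,\tau]}|\eta(t)|\le c\bigr)>0$. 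Enlarging the interval only shrinks this event, so we may assume $\tau\in\mathbb{N}$ and split $[0,\tau]=\bigcup_{i=0}^{\tau-1}[i,i+1]$.

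On each unit block apply Lemma \ref{Lemma:sup ineq 1} with the dyadic nodes $t^n_{i,k}=i+k2^{-n}$, and put $b_n:=\frac{3c}{2\pi^2 n^2}$, chosen so that $\sum_{n\ge 1}b_n=\frac{c}{4}$. Then the event
\[
G:=\bigcap_{i=0}^{\tau-1}\Bigl(\bigl\{|\eta(i)|\le\tfrac{c}{4}\bigr\}\cap\bigl\{|\eta(i+1)|\le\tfrac{c}{4}\bigr\}\cap\bigcap_{n\ge 1}\bigl\{\textstyle\max_{0\le k\le 2^n-1}|\eta(t^n_{i,k+1})-\eta(t^n_{i,k})|\le b_n\bigr\}\Bigr)
\]
is contained in $\{\sup_{t\in[0,\tau]}|\eta(t)|\le c\}$, since on $G$ the chaining bound gives $\sup_{[i,i+1]}|\eta|\le\frac{c}{4}+\frac{c}{4}+\sum_{n\ge1}b_n=\frac{3c}{4}$ for every block. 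Each constituent event is the preimage, under evaluation at finitely many nodes, of a closed convex subset of a Euclidean space symmetric about the origin (a slab $\{|v_p|\le c/4\}$ or $\{|v_p-v_q|\le b_n\}$), and all of them are determined by the centered Gaussian vector of $\eta$ over the countable node set. Truncating the inner intersection at level $n\le N$ yields a finite intersection of such symmetric convex sets, so iterating the Gaussian correlation inequality (Theorem \ref{GaussianCorrelation}) bounds the probability of the truncated event below by the product of the probabilities of the individual slabs; letting $N\to\infty$, the truncated events decrease to $G$, so $\Pi(G)$ is at least that infinite product.

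It remains to check the product is strictly positive, i.e.\ that the sum of complementary probabilities converges. Only finitely many point-slab terms occur, each with bounded complement. For the increments, $\eta(t^n_{i,k+1})-\eta(t^n_{i,k})$ is centered Gaussian with variance $2(\kappa(0)-\kappa(2^{-n}))$, so the Gaussian tail bound together with Assumption (A1) gives
\[
\Pi\bigl(|\eta(t^n_{i,k+1})-\eta(t^n_{i,k})|>b_n\bigr)\le\exp\Bigl(-\tfrac14 b_n^2(\kappa(0)-\kappa(2^{-n}))^{-1}\Bigr)\le\exp\bigl(-\tfrac14 b_n^2 n^6\bigr)=\exp(-\alpha n^2),
\]
with $\alpha=\frac{9c^2}{16\pi^4}>0$; summing over the $2^n$ indices $k$, the $\tau$ blocks, and $n\ge1$ gives the finite bound $\tau\sum_{n\ge1}2^n\exp(-\alpha n^2)<\infty$. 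Hence $\Pi(G)>0$ and the lemma follows. The crux is precisely this last estimate: geometrically many ($2^n$) increment constraints appear at scale $2^{-n}$, and the product stays away from $0$ only because the $n^6$ rate in (A1) forces $b_n^2(\kappa(0)-\kappa(2^{-n}))^{-1}$ to grow like $n^2$, overwhelming the $2^n$ factor. (Alternatively one could invoke that $\eta|_{[0,\tau]}$ induces a centered Gaussian measure on the separable Banach space $C[0,\tau]$, whose topological support contains the origin, so every uniform ball about $0$ has positive mass; the route above is taken to keep the exposition self-contained and parallel to Lemma \ref{Lemma: Sup tail greater than M}.)
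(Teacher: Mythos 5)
Your proof is correct and follows essentially the same route as the paper's: reduce via $h_d\le h_d(0)$ to a small-ball event for $\eta$ on $[0,\tau]$, control the supremum with the dyadic chaining bound of Lemma \ref{Lemma:sup ineq 1}, apply the Gaussian correlation inequality (Theorem \ref{GaussianCorrelation}) to finite truncations and pass to the limit, and use the Gaussian concentration inequality together with (A1) to show the resulting infinite product is strictly positive. Your only deviation --- splitting $[0,\tau]$ into unit blocks so that the dyadic increments have spacing exactly $2^{-n}$, which lets (A1) be invoked verbatim (the paper partitions $[0,\tau]$ directly, with spacing $\tau/2^{n}$) --- is a minor refinement that does not change the argument.
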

\begin{proof}
Recall the definition of function $h_d$ given in equation \eqref{eq: h function}
\begin{eqnarray}
h_d(t)&=&\left\{\begin{array}{cc}
\frac{d+1}{1+\log(1-e^{-1})}&t\leq 1\\
\frac{d+1}{t+\log(1-e^{-t})}&t> 1
\end{array}\right. ,\nonumber
\end{eqnarray}
hence $h_d(t)$ is strictly decreasing and positive for $t\geq 1$. Consider $\tau\geq 1$, then 
\begin{eqnarray}
\left\{\underset{t\in[0,\tau]}{\sup}|\eta(t)h_d(t)|\leq \frac{\delta h_d(\tau)}{1+\tau}\right\}&\supseteq&\left\{\underset{t\in[0,\tau]}{\sup}|\eta(t)|\leq  \frac{\delta h_d(\tau)}{h_d(1)(1+\tau)}\right\}.\nonumber
\end{eqnarray}
Consider the partition $A_n=\left\{t^n_k=\frac{k\tau}{2^n},k=0,\ldots,2^n\right\}$, by Lemma \ref{Lemma:sup ineq 1}
\begin{eqnarray}
\underset{t\in[0,\tau]}{\sup}|\eta(t)|&\leq& |\eta(0)|+|\eta(\tau)|+\sum_{n=1}^{\infty}\underset{0\leq k\leq 2^{n}-1}{\max}\left|\eta(t^n_{k+1})-\eta(t^n_k)\right|.\nonumber
\end{eqnarray}
For simplicity, let's call $\psi_\delta(\tau)=\delta\frac{h_d(\tau)}{h_d(1)(1+\tau)}$ and define the event
\begin{eqnarray}
S&=&\left\{\underset{t\in[0,\tau]}{\sup}|\eta(t)|\leq \psi_\delta(\tau)\right\}.\nonumber
\end{eqnarray}
On the other hand, define the event
\begin{eqnarray}
Z&\coloneqq&\left\{|\eta(0)|\leq\frac{\psi_\delta(\tau)}{4}\right\}\cap\left\{\ |\eta(\tau)|\leq\frac{\psi_\delta(\tau)}{4}\right\}\cap\left\{\bigcap_{n=1}^\infty\left\{ \underset{0\leq k\leq 2^n-1}{\max}\left|\eta(t^n_{k+1})-\eta(t^n_k)\right|\leq\frac{3\psi_\delta(\tau)}{n^2\pi^2}\right\}\right\}.\nonumber
\end{eqnarray}
Then, for any function $\eta$ in $Z$, it holds
\begin{eqnarray}
\underset{t\in[0,\tau]}{\sup}|\eta(t)|&\leq& |\eta(0)|+|\eta(\tau)|+\sum_{n=1}^{\infty}\underset{0\leq k\leq 2^{n}-1}{\max}\left|\eta(t^n_{k+1})-\eta(t^n_k)\right|\leq\psi_\delta(\tau)\nonumber,
\end{eqnarray}
which means it also belongs to $S$ and then
\begin{eqnarray}
\Pi(S)&\geq&\Pi(Z).\nonumber
\end{eqnarray}
Moreover, define the event
\begin{eqnarray}
Z_N&\coloneqq&\left\{|\eta(0)|\leq\frac{\psi_\delta(\tau)}{4}\right\}\cap\left\{\ |\eta(\tau)|\leq\frac{\psi_\delta(\tau)}{4}\right\}\cap\left\{\bigcap_{n=1}^N\left\{ \underset{0\leq k\leq 2^n-1}{\max}\left|\eta(t^n_{k+1})-\eta(t^n_k)\right|\leq\frac{3\psi_\delta(\tau)}{n^2\pi^2}\right\}\right\}.\nonumber
\end{eqnarray}
Consider the collection of random variables $\{\alpha^ n_k=\eta(t^n_{k+1})-\eta(t^n_k),k\in\{0,\ldots,2^n-1\},n\in\{1,\ldots,N\}\}$, $\eta(0)$ and $\eta(\tau)$. Observe there are in total $2+\sum_{n=1}^N2^n=2^{N+1}$ variables, then the joint distribution of them is a zero mean, $2^{N+1}$-variate Gaussian distribution, i.e. they lie on $\mathbb{R}^{2^{N+1}}$.

Therefore, by replicating the arguments given in the proof of Lemma \ref{Lemma: Gaussian correlation inequality for GP} (since we have convex, closed and symmetric around the origin events on $\mathbb{R}^{2^{N+1}}$), we use the Gaussian correlation inequality, Theorem \ref{GaussianCorrelation} \cite{memarian2013gaussian}, to prove
\begin{eqnarray}
\Pi(Z_N)&\geq&\Pi\left(|\eta_0|\leq\frac{\psi_\delta(\tau)}{4}\right)\Pi\left(|\eta_\tau|\leq\frac{\psi_\delta(\tau)}{4}\right)\prod_{n=1}^N\prod_{k=0}^{2^n-1}\Pi\left(\left|\eta_{t_{k+1}}-\eta_{t_k}\right|\leq\frac{3\psi_\delta(\tau)}{n^2\pi^2}\right).\nonumber
\end{eqnarray}
Additionally, since the events $Z_N\subseteq Z_{N+1}$ for all $N\geq 1$, 
\begin{eqnarray}
\underset{N\to\infty}{\lim}\Pi\left(\bigcap_{n=1}^N Z_n\right)&=&\Pi(Z),\nonumber
\end{eqnarray}
and hence
\begin{eqnarray}
\Pi(S)\geq\Pi(Z)\geq\Pi\left(|\eta_0|\leq\frac{\psi_\delta(\tau)}{4}\right)\Pi\left(|\eta_\tau|\leq\frac{\psi_\delta(\tau)}{4}\right)\prod_{n=1}^\infty\prod_{k=0}^{2^n-1}\Pi\left(\left|\eta_{t_{k+1}}-\eta_{t_k}\right|\leq\frac{3\psi_\delta(\tau)}{n^2\pi^2}\right).\nonumber
\end{eqnarray}
By using the Gaussian concentration inequality,
\begin{eqnarray}
\Prob(|X-\mu|>t)&\leq&2\exp\left\{-\frac{t^2}{2\sigma^2}\right\},
\end{eqnarray}
(where $X\sim N(\mu,\sigma^2)$), and assumption (A1) $(\kappa(0)-\kappa(2^n))^{-1}\geq n^6$, we have
\begin{eqnarray}
\Pi(S)&\geq&\Pi\left(|\eta_0|\leq\frac{\psi_\delta(\tau)}{4}\right)^2\prod_{n=1}^{\infty}\left(1-\exp\left\{-\frac{9\psi_\delta(\tau)^2(\kappa(0)-\kappa(2^{-n}))^{-1}}{4n^4\pi^2}\right\}\right)^{2^n}\nonumber\\
&\geq&\Pi\left(|\eta_0|\leq\frac{\psi_\delta(\tau)}{4}\right)^2\prod_{n=1}^{\infty}\left(1-\exp\left\{-\frac{9\psi_\delta(\tau)^2n^2}{4\pi^2}\right\}\right)^{2^n}.\nonumber
\end{eqnarray}
Now use $\exp\left\{-\frac{x}{1-x}\right\}\leq 1-x $
\begin{eqnarray}
\Prob(S)&\geq&\Prob\left((|\eta_0|\leq\frac{\psi_\delta(\tau)}{4}\right)^2\prod_{n=1}^{\infty}\exp\left\{-\frac{2^n\exp\left\{-\frac{9\psi_\delta(\tau)^2n^2}{4\pi^2}\right\}}{1-\exp\left\{-\frac{9\psi_\delta(\tau)^2n^2}{4\pi^2}\right\}}\right\}\\
&\geq&\Prob\left((|\eta_0|\leq\frac{\psi_\delta(\tau)}{4}\right)^2\exp\left\{-\sum_{n=1}^{\infty}\frac{\exp\left\{-\frac{9\psi_\delta(\tau)^2n^2}{4\pi^2}+n\log{2}\right\}}{1-\exp\left\{-\frac{9\psi_\delta(\tau)^2n^2}{4\pi^2}\right\}}\right\}>0,\nonumber
\end{eqnarray}
since the latter series converges.

\end{proof}

\begin{proof}[Proof of Lemma \ref{Lemma: Psotivity Joint}]

Consider
\begin{eqnarray}
\Pi(C^j_{\delta,\tau})&=&\Pi\left(\underset{t\in[0,\tau]}{\sup}|\hat{\eta}_j(t)|\leq \frac{\delta h_d(\tau)}{2(1+\tau)},\underset{t> \tau}{\sup}|\hat{\eta}_j(t)|\leq \frac{1}{6}\right),\label{eq1}
\end{eqnarray}
then, by Lemma \ref{Lemma: Gaussian correlation inequality for GP}, we have
\begin{eqnarray}
\eqref{eq1}&\geq&\Pi\left(\underset{t\in[0,\tau]}{\sup}|\hat{\eta_j}(t)|\leq \frac{\delta h_d(\tau)}{2(1+\tau)}\right)\Pi\left(\underset{t\geq \tau}{\sup}|\hat{\eta_j}(t)|\leq \frac{1}{6}\right).\nonumber
\end{eqnarray}
By Lemma \ref{Lemma: Positivity [0,tau]}, for any $\tau>1$ and $\delta>0$, we have
\begin{eqnarray}
\Pi\left(\underset{t\in[0,\tau]}{\sup}|\hat{\eta_j}(t)|\leq \frac{\delta h_d(\tau)}{1+\tau}\right)&>&0,\nonumber
\end{eqnarray}
for all $j\in\{0,\ldots,d\}$. Additionally
\begin{eqnarray}
\Pi\left(\underset{t\geq \tau}{\sup}|\hat{\eta_j}(t)|\leq \frac{1}{6}\right)&=&1-\Pi\left(\underset{t\geq \tau}{\sup}|\hat{\eta_j}(t)|\geq \frac{1}{6}\right),\nonumber
\end{eqnarray}
then, by Lemma \ref{Lemma: Sup tail greater than M}, there exists $\tau^\star_j>1$ such that for $\tau>\tau^\star_j$,
\begin{eqnarray}
\Pi\left(\underset{t\geq \tau}{\sup}|\hat{\eta_j}(t)|\leq \frac{1}{6}\right)&\geq&1-p_{d,\frac{1}{6},\kappa_j}(\tau),\label{Positive set probability}
\end{eqnarray}
where $p_{d,\frac{1}{6},\kappa_j}(\tau)$ decreases to zero for each $j\in\{0,\ldots,d\}$. From the latter, we conclude there exists $\tau_j>\tau^\star_j$ large enough such that for all $\tau\geq\tau_j$
\begin{eqnarray}
\Pi\left(\underset{t\geq \tau}{\sup}|\hat{\eta_j}(t)|\leq \frac{1}{6}\right)>0,\nonumber
\end{eqnarray}
for every $j\in\{0,\ldots,d\}$. By considering $\tau_C=\max\{\tau_0,\ldots,\tau_d\}$ we conclude
\begin{eqnarray}
\Pi(C^j_{\delta,\tau})>0.\nonumber
\end{eqnarray}
for all $\tau\geq\tau_C$ for all $j$.
\end{proof}
\newpage


\begin{thebibliography}{10}

\bibitem{barron1999consistency}
Andrew Barron, Mark~J Schervish, Larry Wasserman, et~al.
\newblock The consistency of posterior distributions in nonparametric problems.
\newblock {\em The Annals of Statistics}, 27(2):536--561, 1999.

\bibitem{choi2004posterior}
Taeryon Choi and Mark~J Schervish.
\newblock Posterior consistency in nonparametric regression problems under
  gaussian process priors.
\newblock 2004.

\bibitem{choudhuri2004bayesian}
Nidhan Choudhuri, Subhashis Ghosal, and Anindya Roy.
\newblock Bayesian estimation of the spectral density of a time series.
\newblock {\em Journal of the American Statistical Association},
  99(468):1050--1059, 2004.

\bibitem{devroye2012combinatorial}
Luc Devroye and G{\'a}bor Lugosi.
\newblock {\em Combinatorial methods in density estimation}.
\newblock Springer Science \& Business Media, 2012.

\bibitem{doksum1974tailfree}
Kjell Doksum.
\newblock Tailfree and neutral random probabilities and their posterior
  distributions.
\newblock {\em The Annals of Probability}, pages 183--201, 1974.

\bibitem{Doob1949}
Joseph~L Doob.
\newblock Application of the theory of martingales.
\newblock {\em Le calcul des probabilites et ses applications}, pages 23--27,
  1949.

\bibitem{dykstra1981bayesian}
RL~Dykstra and Purushottam Laud.
\newblock A bayesian nonparametric approach to reliability.
\newblock {\em The Annals of Statistics}, pages 356--367, 1981.

\bibitem{fernandez}
Tamara Fern\'andez, Nicol\'as Rivera, and Yee~Whye Teh.
\newblock Gaussian processes for survival analysis.
\newblock In {\em Advances in Neural Information Processing Systems}, page to
  appear, 2016.

\bibitem{ghosal2006posterior}
Subhashis Ghosal and Anindya Roy.
\newblock Posterior consistency of gaussian process prior for nonparametric
  binary regression.
\newblock {\em The Annals of Statistics}, pages 2413--2429, 2006.

\bibitem{hjort1990nonparametric}
Nils~Lid Hjort.
\newblock Nonparametric bayes estimators based on beta processes in models for
  life history data.
\newblock {\em The Annals of Statistics}, pages 1259--1294, 1990.

\bibitem{bayes2010}
Nils~Lid Hjort, Chris Holmes, Peter M{\"u}ller, and Stephen~G Walker.
\newblock {\em Bayesian nonparametrics}, volume~28.
\newblock Cambridge University Press, 2010.

\bibitem{kuelbs1994gaussian}
James Kuelbs, Wenbo~V Li, and Werner Linde.
\newblock The gaussian measure of shifted balls.
\newblock {\em Probability Theory and Related Fields}, 98(2):143--162, 1994.

\bibitem{memarian2013gaussian}
Yashar Memarian.
\newblock The gaussian correlation conjecture proof.
\newblock {\em arXiv preprint arXiv:1310.8099}, 2013.

\bibitem{schwartz1965bayes}
Lorraine Schwartz.
\newblock On bayes procedures.
\newblock {\em Zeitschrift f{\"u}r Wahrscheinlichkeitstheorie und verwandte
  Gebiete}, 4(1):10--26, 1965.

\bibitem{tokdar2007posterior}
Surya~T Tokdar and Jayanta~K Ghosh.
\newblock Posterior consistency of logistic gaussian process priors in density
  estimation.
\newblock {\em Journal of Statistical Planning and Inference}, 137(1):34--42,
  2007.

\bibitem{van2008reproducing}
Aad~W van~der Vaart, J~Harry van Zanten, et~al.
\newblock Reproducing kernel hilbert spaces of gaussian priors.
\newblock In {\em Pushing the limits of contemporary statistics: contributions
  in honor of Jayanta K. Ghosh}, pages 200--222. Institute of Mathematical
  Statistics, 2008.

\end{thebibliography}
\end{document}